\numberwithin{equation}{section}
\theoremstyle{plain}
  \newtheorem{theorem}[subsection]{Theorem}
  \newtheorem*{mainthm}{Main Theorem}
  \newtheorem{proposition}[subsection]{Proposition}
  \newtheorem*{proposition_w-reduction2}{Proposition \ref{w-reduction2}}
  \newtheorem{lemma}[subsection]{Lemma}
  \newtheorem{corollary}[subsection]{Corollary}
  \newtheorem{claim}{Claim}
  \newtheorem*{Claim}{Claim}
  \newtheorem{definition}[subsection]{Definition}
\theoremstyle{remark}
  \newtheorem*{remarks}{Remarks}
  \newtheorem*{remark}{Remark}
\renewcommand{\leq}{\leqslant}
\renewcommand{\geq}{\geqslant}
\newcommand{\Mod}[1]{\;(\mathrm{mod}\,#1)}
\newcommand{\stsubsection}[1]{     \subsection*[#1]{\sc #1}}
\newcommand\E{\mathbb{E}}
\newcommand\Z{\mathbb{Z}}
\newcommand\R{\mathbb{R}}
\newcommand\C{\mathbb{C}}
\newcommand\N{\mathbb{N}}
\newcommand\W{\overline W}
\newcommand\eps{\varepsilon}
\newcommand\vol{\operatorname{vol}}
\newcommand\lcm{\operatorname{lcm}}
\newcommand{\dsum}{\sideset{}{^{\prime}}{\sum}}
\begin{document}
\title{Correlations of the divisor function}

\author{Lilian Matthiesen}
\address{Centre for Mathematical Sciences\\
Wilberforce Road\\
Cambridge CB3 0WA\\
UK }
\email{L.Matthiesen@dpmms.cam.ac.uk}

\subjclass[2010]{11N37; 11N64}

\begin{abstract}
Let $\tau(n)= \sum_{d} 1_{d|n}$ denote the divisor function.
Based on Erd\H{o}s's fundamental work on sums of multiplicative functions
evaluated over polynomials, we construct a pseudorandom majorant for 
a slightly smoothed version of $\tau$.
By means of the nilpotent Hardy-Littlewood method we give an asymptotic
for the following correlation
$$\E_{n \in [-N,N]^d \cap K} \prod_{i=1}^t \tau(\psi_i(n))~,$$
where $\Psi=(\psi_1, \dots, \psi_t)$ is a non-degenerated system of
affine-linear forms no two of which are affinely related, and where $K$
is a convex body.
\end{abstract}

\maketitle
\tableofcontents

\addtocontents{toc}{\protect\setcounter{tocdepth}{1}}

\section{Introduction}\label{sec1}

Questions concerning the distribution of the values of elementary
arithmetic functions play a central role in analytic number theory. We
mention two classes of such questions, both of which are related to the
results of this paper.

The first class concerns asymptotics for sums 
$$\sum_{M \leq  n \leq N+M} f(|P(n)|)$$  
of multiplicative functions evaluated over polynomials, a direction which
has been substantially influenced by Erd\H{o}s's work on the sum 
$\sum \tau(P(n))$, see \cite{erdos}. 
We shall employ some ideas introduced in that paper.
For newer work on this type of question, see for instance
\cite{nair-tenenbaum} and the references therein.

A second class considers linear correlations. 
Write $[N]$ to denote the set of numbers $\{1,\dots, N\}$,
let $f : [N] \to \R$ be an arithmetic function and let
$\psi_1,\dots,\psi_t : \Z^d \to \Z$ be affine-linear forms. 
Then we ask for an asymptotic to the correlation
\begin{equation}\label{star}
\sum_{n \in K \cap \Z^d} f(\psi_1(n)) \dots f(\psi_t(n)),
\end{equation}
where $K \subseteq [-N,N]^d$ is a convex body such that 
$\psi_i(K) \subseteq [1,N]$ for each $i \in [t]$. 
Questions of this second type include the generalised Hardy-Littlewood
conjecture, which predicts, based on a probabilistic model for the prime
numbers, an asymptotic for \eqref{star} when $f=\Lambda$ is given by the
von Mangoldt function. 
Note that the frequency of arithmetic progressions of a fixed length $t$
in the set of primes can be expressed as a special case 
$$\E\big( \Lambda(n_1)\Lambda(n_1 + n_2) \dots \Lambda(n_1 + (t-1)n_2)
\mid n_1 + (t-1)n_2 \leq N \big)
$$
of the $f=\Lambda$ instance of \eqref{star}. 
The generalised Hardy-Littlewood conjecture has been resolved in the
series of papers
\cite{green-tao-linearprimes, green-tao-nilmobius,
green-tao-polynomialorbits, gtz} for those cases where no two forms
$\psi_i$ and $\psi_j$ are affinely related. 
(Thus the prime $k$-tuples conjecture, which concerns the asymptotic
behaviour of $$\E_{n \leq N} \Lambda(n + h_1 )\Lambda(n + h_2) \dots
  \Lambda(n + h_k)$$
for any $k$-tuple of integers $h_1, \dots, h_k$, remains unsettled.)

The general approach that was used in the aforementioned partial
resolution of the Hardy-Littlewood conjecture is described as the
`nilpotent Hardy-Littlewood method' in \cite{green-tao-nilmobius}.
This method can be employed to resolve questions of the above second kind,
provided the function $f$ involved shows a certain amount of random-like
behaviour. 
It resembles the classical method in that this approach too
requires a (suitably adapted) major and a minor arc analysis (Section
\ref{non-correlation-section}), cf.~\cite[\S4]{green-tao-nilmobius} for a
discussion of this analogy.
A very central role in this method is assigned to \emph{pseudorandom
majorant} functions.
We shall explain the reason for this and its role at the start of
Section \ref{majorant-section}. 
In the case of the divisor function, the construction of the majorant
constitutes the principal task that needs to be accomplished in
order to apply the method and thus in order to obtain an asymptotic for
\eqref{star} with $f=\tau$.

For an application of the nilpotent Hardy-Littlewood method the
function $f$ is required to have asymptotic density, that is, to satisfy
$$\E_{n\leq N} f(n) = \delta+o(1)$$
for some absolute constant $\delta \geq 0$.
For this reason, we shall work not with the divisor function itself, but
with the normalised divisor function
${\tilde \tau}  : [N] \rightarrow \R_{\geq 0}$ which is defined by
\[ {\tilde \tau} (n) := \frac{1}{\log N}\sum_{d | n} 1\] 
and has asymptotic density $\delta = 1$.

A \emph{pseudorandom majorant} for $f$ is a function 
$\nu: [N] \to \R_{\geq 0}$
such that $|f(n)| \leq C\nu(n)$ pointwise (for some absolute constant
$C$), and which resembles a random measure in the following sense. 
The total mass of $\nu$ is approximately $1$, that is
$\E_{n \leq N} \nu(n) = 1 + o(1)$, and two further conditions modelling
independence are satisfied. These are the linear forms and correlation
conditions from \cite{green-tao-linearprimes}. 
The linear forms condition requires asymptotics of the form
$$\E_{n \in K \cap \Z^d} \nu(\psi_1(n)) \dots \nu(\psi_t(n))=1+o(1).$$
Note that this is \eqref{star} for the majorant $\nu$ instead of $f$.
Thus, to enable us to check this condition, the pseudorandom majorant
$\nu$ has to be of a form that allows a good understanding of its value
distribution.
In particular, assuming that one failed to establish \eqref{star} for $f$
directly and hence resorted to other methods of approach, the majorant
has to be sufficiently easier to understand than the function $f$.

In the course of the above cited work on \eqref{star} for the von Mangoldt
function, the problem of finding an asymptotic for \eqref{star} was also
addressed for $f = \mu$, the M\"obius function.
A key feature of both functions $\mu$ and $\Lambda$ is that they
show some regularity in their growth.
$\mu$ is bounded by $1$ pointwise, whereas $\Lambda$ grows not 
faster than $\log$. 
This regularity is of advantage for the task of constructing a function
that is simple enough that one can check the linear forms condition, and
which simultaneously satisfies the majorant and the density condition. 

The divisor function $\tau(n) = \sum_{d|n}1$, on the other hand, is known
for its irregularities in distribution.
The moments $\E_{n \leq N} \tau (n)^p \sim (\log N)^{2^p-1}$ grow rapidly
in $p$.
While $\tau$ has an `approximate' normal order, that is for every
$\eps > 0$ all but $o(N)$ positive integers $n \leq N$ satisfy
$(\log N)^{(1-\eps)\log 2} < \tau(n) < (\log N)^{(1+\eps)\log 2}$, a
theorem of Birch \cite{birch} implies that it does not have a normal order
in the sense of Hardy and Ramanujan.
Instead there is a gap between the `approximate' normal order 
$(\log N)^{\log 2}$ and the average order $\log N$, which results from few
exceptionally large values of $\tau$.
In particular, $\tau(n)$, and similarly ${\tilde \tau} (n)$, can be as
large as $\exp(c \log n / \log \log n)$, see
\cite[\S18.1 and \S22.13]{hardy-wright}.

We shall show that, nonetheless, there is a pseudorandom majorant 
$\nu: [N] \to \R_{\geq 0}$ for (a $W$-tricked version of) $\tilde \tau$,
and that the same basic method that was employed to deal with $f=\mu$ and
$f=\Lambda$ can also be employed in this case:
The existence of this majorant in combination with the recent complete
resolution of the Inverse Conjectures for the Gowers norms \cite{gtz}
allows us to deduce an asymptotic for 
$\sum_{ n \in K \cap \Z^d} {\tilde \tau} (\psi_1(n)) \dots {\tilde \tau}
(\psi_t(n))$
under the already mentioned condition that no two forms $\psi_i$
and $\psi_j$ are affinely related.
 
\stsubsection{Notation and statement of the main result} We recall some
notation from \cite{green-tao-linearprimes} in order to state the result
precisely.

\begin{definition}[Affine-linear forms] Let $d,t \geq 1$ be
integers. An \emph{affine-linear form} on $\Z^d$ is a function $\psi:
\Z^d \to \Z$ which is the sum $\psi= \dot{\psi} + \psi(0)$ of a linear
form $\dot{\psi}: \Z^d \to \Z$ and a constant $\psi(0) \in \Z$. A
\emph{system of affine-linear forms} on $\Z^d$ is a collection $\Psi =
(\psi_1, \dots, \psi_t)$ of affine-linear forms on $\Z^d$ that is
required to satisfy the following non-degeneracy condition: 
no affine-linear form is constant, 
no restriction of $\Psi$ to a single variable is constant and 
no two forms are rational multiples of each other.
\end{definition}

Let $N$ be a (usually large) positive integer, and let $L$ be a fixed
positive integer. 
Throughout this paper we will assume that the coefficients of the linear
part $\dot \Psi$ of the affine-linear system we work with are bounded by
$L$. 
The constant term $\Psi(0)$ may depend on the cut-off $N$, but
we will require that the convex set $K$ is such that 
$K \subseteq [-N,N]^d$ and $\Psi(K) \subset [1,N]^t$.
We furthermore assume that no $\dot \psi_i,\dot \psi_j$, $i\not=j$, are
linearly dependent.

As we will show, the asymptotic behaviour of
\begin{equation}\label{introd;eq1}
 \sum_{ n \in K \cap \Z^d} {\tilde \tau} (\psi_1(n)) \dots {\tilde \tau}
  (\psi_t(n))
 = \frac{1}{(\log N)^t} 
  \sum_{d_1, \dots, d_t}
  \sum_{ n \in K \cap \Z^d}
  1_{d_1|\psi_1(n)} \dots 
  1_{d_t|\psi_t(n)}
\end{equation}
is then determined by the local behaviour of the affine-linear system
modulo small primes. 
To make this precise, we proceed to define local factors at primes.

For a given system $(\psi_1,\dots,\psi_t)$ of affine-linear forms,
positive integers $d_1, \dots, d_t$ and their least common multiple 
$m:= \lcm(d_1, \dots, d_t)$ define \emph{local divisor densities} by 
$$\alpha(d_1, \dots, d_t) := \E_{n \in (\Z/m\Z)^d} \prod_{i \in [t]}
1_{\psi_i(n) \equiv 0 \Mod{d_i}}~.$$
The Chinese remainder theorem implies that $\alpha$ is multiplicative.
Thus, we restrict attention to what happens at prime powers $d_i=p^{a_i}$
for a fixed prime $p$. 
If the forms $\psi_i$ were independent, one would expect 
$\alpha(p^{a_1},\dots,p^{a_t}) = p^{-a_1} \dots p^{-a_t}$. 
The prime powers of $p$ would then contribute to \eqref{introd;eq1} a
factor of
$$\sum_{a_1, \dots, a_t} p^{-a_1} \dots p^{-a_t} = (1 - p^{-1})^{-t}~.$$
We therefore introduce for each prime $p$ a \emph{local factor}
$$\beta_p := (1-p^{-1})^t \sum_{a_1, \dots, a_t \in \N}
  \alpha(p^{a_1},\dots,p^{a_t})$$
which measures the irregularities of the divisor densities of the given
system $\Psi$ of affine-linear forms.
As will be checked in the next section, the local factors satisfy the
estimate
$\beta_p = 1 + O_{t,d,L}(p^{-2})$. 
Thus, in particular, their product $\prod_p \beta_p$ converges.

Our main result is the following local-global principle.

\begin{mainthm} Let $N,d,t,L$ be positive integers and let $\Psi =
(\psi_1,\dots,\psi_t) : \Z^d \to \Z^t$ be a system of
affine-linear forms whose coefficients of non-constant terms are bounded
by $L$ and for which any $\dot \psi_i$, $\dot \psi_j$, $i \not= j$, are
linearly independent. 
Then
\[ \sum_{n \in K \cap \Z^d} \prod_{i=1}^t {\tilde \tau}
(\psi_i(n))
   = \vol(K) \prod_p \beta_p + o_{t,d,L}(N^d) 
\]
for any convex body $K \subseteq [-N,N]^d$ such that
$\Psi(K) \subset [1,N]^t$.
\end{mainthm}
Observe that this result only gives suitable information when 
$N^d \ll \vol(K)$.

The corresponding asymptotic for the divisor function is an
immediate consequence:

\begin{corollary}[Correlations of the divisor function] With the
assumptions of the Main Theorem, the divisor function $\tau$ satisfies
 $$ \sum_{n \in K \cap \Z^d} 1_K(n) \prod_{i=1}^t \tau(\psi_i(n))
   = (\log N)^t \vol(K) \prod_p \beta_p + o_{t,d,L}(N^d \log^t N)~.$$
\end{corollary}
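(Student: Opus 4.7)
The corollary is an immediate consequence of the Main Theorem, obtained simply by unwinding the definition of the normalised divisor function. The plan is to use that ${\tilde \tau}(n) = \tau(n)/\log N$ pointwise on $[N]$, so that for any integer $m \in [1,N]$ (in particular for $m = \psi_i(n)$, which lies in $[1,N]$ by the hypothesis $\Psi(K) \subset [1,N]^t$) one has $\tau(m) = (\log N)\, {\tilde \tau}(m)$.

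Multiplying out the product over $i \in [t]$ therefore gives the identity
\[
 \prod_{i=1}^t \tau(\psi_i(n)) = (\log N)^t \prod_{i=1}^t {\tilde \tau}(\psi_i(n))
\]
for every $n \in K \cap \Z^d$. Summing over $n \in K \cap \Z^d$ and applying the Main Theorem to the right-hand side yields
\[
 \sum_{n \in K \cap \Z^d} \prod_{i=1}^t \tau(\psi_i(n))
 = (\log N)^t \Bigl( \vol(K) \prod_p \beta_p + o_{t,d,L}(N^d) \Bigr),
\]
and since $(\log N)^t \cdot o_{t,d,L}(N^d) = o_{t,d,L}(N^d \log^t N)$, this is exactly the claimed asymptotic. (The factor $1_K(n)$ in the statement is redundant given that the sum already runs over $n \in K \cap \Z^d$.)

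There is no real obstacle to overcome here: the whole point of introducing the normalisation ${\tilde \tau}$ in the introduction was to package the $\log N$-rescaling needed to give $\tau$ asymptotic density $1$, so that the pseudorandom-majorant / nilpotent Hardy--Littlewood machinery could be deployed on ${\tilde \tau}$. Once the Main Theorem has been proved for ${\tilde \tau}$, the corresponding asymptotic for $\tau$ follows by reversing this normalisation.
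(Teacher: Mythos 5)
Your proof is correct and matches the paper, which presents the corollary as an immediate consequence of the Main Theorem obtained precisely by unwinding the normalisation ${\tilde \tau}(n) = \tau(n)/\log N$ and multiplying through by $(\log N)^t$. Nothing further is needed.
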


The special case of $d=2$ and $t=3$ of this corollary also appears as
special case of \cite[Thm 3]{browning} when setting all the $d_i$ and
$D_i$ to equal $1$ in the statement of the latter.
In contrast to our result, \cite[Thm 3]{browning} gives, with a saving of
a power in $N$, a good explicit error term.

The condition that no two forms $\dot \psi_i$ and $\dot \psi_j$ are
linearly dependent, which the main theorem places upon the affine-linear
system $\Psi$, is equivalent to saying that the affine-linear system
$\Psi$ has \emph{finite complexity}, a notion introduced in
\cite{green-tao-linearprimes}.
The \emph{infinite complexity} case includes problems of just
one free parameter, like the one of estimating 
\begin{equation}\label{inf complexity example}
 \E_{n \leq N} \tau (n + a_1) \dots \tau (n + a_k)~.
\end{equation}
These remain untouched, as they cannot be addressed by the nilpotent
Hardy-Littlewood method. To place the task of estimating 
\eqref{inf complexity example} into context, we mention that
Ingham \cite{Ing27JLMS} proves the asymptotic
$$ \sum_{n=1}^N \tau(n) \tau(n+a) 
   = \frac{6}{\pi^2} \sigma_{-1} (a)~ N \log^2 N + O(N \log N)~,$$
where $\sigma_{-1} (a) = \sum_{d|a} d^{-1}$. 
No asymptotics are known when $k \geq 3$;
c.f.~\cite[Thm 2]{browning} for a recent result into the direction of
gaining asymptotics in the $k=3$ case.

A subsequent paper \cite{m-diagquadratics} considers the problem of
the type \eqref{star} for other arithmetic functions such as $r(n)$, the
number of representations of $n$ as a sum of two squares.
This has some natural arithmetic consequences concerning the
number of simultaneous integer zeros of pairs of certain diagonal
quadratic forms, which are, in the 8-variables case, out of reach
of the classical Hardy-Littlewood method as it currently stands.

\section{Local divisor densities}

This section contains some lemmas involving local divisor densities that
are repeatedly used in analysing singular products. We also provide an
estimate for $\beta_p$.

Let $\Psi=(\psi_1,\dots,\psi_t):\Z^d \to \Z^t$ be a system of
affine-linear forms whose linear coefficients are bounded by $L$, let 
$K \subseteq [-N,N]^d$ be a convex body, and let $d_1, \dots, d_t$ be
integers. 
Divisibility events of the form
\[\sum_{n\in \Z^d \cap K} \prod_{i\in [t]} 1_{d_i|\psi_i(n)}\]
will naturally occur quite frequently in this paper.
As in \cite{green-tao-linearprimes}, the main tool to deal with these
divisibility events is a simple volume packing lemma.
\begin{lemma}[Volume packing argument]
\label{volume-packing} 
Let $B$ be a positive integer, let $K \subset \R^d$ be a convex body that
is contained in some translate of $[-B,B]^d$ and let $\Psi:\Z^d \to \Z^t$
be a system of affine-linear forms. Then 
\[
 \sum_{n\in \Z^d \cap K} \prod_{i\in [t]} 1_{d_i|\psi_i(n)} 
 = \vol(K) \alpha (d_1, \dots, d_t) + O(B^{d-1} \lcm(d_1,\dots, d_t))~.
\]
\end{lemma}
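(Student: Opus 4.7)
The plan is to exploit the fact that the integrand $n \mapsto \prod_{i} 1_{d_i \mid \psi_i(n)}$ is periodic modulo $m := \lcm(d_1,\dots,d_t)$, and then reduce the count to a standard lattice-point problem in each residue class modulo $m$.

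First, since each factor $1_{d_i\mid\psi_i(n)}$ depends only on $n$ modulo $d_i$, and hence only on $n$ modulo $m$, I would rewrite
\[
\sum_{n\in \Z^d \cap K} \prod_{i\in [t]} 1_{d_i|\psi_i(n)} \;=\; \sum_{r \in S} \bigl|K \cap (r + m\Z^d)\bigr|,
\]
where $S \subseteq (\Z/m\Z)^d$ denotes the set of residues $r$ satisfying $\psi_i(r) \equiv 0 \Mod{d_i}$ for every $i \in [t]$. By the very definition of $\alpha$, one has $|S| = m^d\, \alpha(d_1,\dots,d_t)$.

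Next, I would invoke the classical convex-body lattice count: for any convex $K' \subset \R^d$ contained in a translate of $[-R,R]^d$ with $R \geq 1$, one has $|K' \cap \Z^d| = \vol(K') + O_d(R^{d-1})$. Applying this to the dilate $\tfrac{1}{m}(K - r)$, which lies in a translate of $[-B/m,B/m]^d$ and is in bijection with $K \cap (r+m\Z^d)$, yields in the regime $B \geq m$ the uniform estimate
\[
\bigl|K \cap (r+m\Z^d)\bigr| \;=\; \frac{\vol(K)}{m^d} + O\bigl((B/m)^{d-1}\bigr).
\]
Summing over $r \in S$ then produces the main term $\vol(K)\,\alpha(d_1,\dots,d_t)$ with total error bounded by $m^d \alpha \cdot O\bigl((B/m)^{d-1}\bigr) = O\bigl(\alpha \cdot m B^{d-1}\bigr) = O(m B^{d-1})$, which is what we want.

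The only subtlety, and the main obstacle to uniformity, is the opposite regime $m > B$, where $B/m < 1$ and the per-class estimate is no longer useful. Here, however, both sides of the desired identity are controlled trivially: the left-hand side is bounded by $|K \cap \Z^d| = O(B^d)$, while $\vol(K)\alpha \leq (2B)^d$, so their difference is $O(B^d)$. Since $B^d \leq m B^{d-1}$ whenever $m > B$, the claimed error bound holds for free in this regime. Combining the two cases completes the argument.
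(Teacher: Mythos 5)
Your proof is correct and follows essentially the same volume-packing idea as the paper, which tiles $K$ by boxes of side $\lcm(d_1,\dots,d_t)$ and bounds the error by the volume of a boundary neighbourhood; your version merely reorganises this as a residue-class decomposition modulo $m$ plus the standard convex-body lattice-point count. Your explicit treatment of the degenerate regime $m>B$ is a nice touch of care that the paper leaves implicit.
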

\begin{proof}
Let $\delta = \lcm(d_1,\dots, d_t)$ and cover $K$ by translates
$\delta \Z^d + [0,\delta)^d $ of the box $[0,\delta)^d$. Each box
contains $\delta^d \alpha (d_1, \dots, d_t)$ points $n$ such that
$\prod_{i\in [t]} 1_{d_i|\psi_i(n)} = 1$. Any box that does not lie
completely inside $K$ is contained in the $2 \delta$-neighbourhood of the
boundary of $K$, which has by
\cite[Corollary A.2]{green-tao-linearprimes} a volume of order
$O_d(\delta B^{d-1})$. Putting things together yields the result. 
\end{proof}

We proceed to analyse the multiplicative function $\alpha = \alpha_{\Psi}$
more closely.
If $p$ is large compared to $t$, $d$, $L$, then
\begin{equation}\label{alpha-bound-0}
 \alpha(p^{a_1}, \dots, p^{a_t}) = p^{-a_j}~,
\end{equation}
when $a_j$ is the only non-zero exponent. 
A prime $p$ is called \emph{exceptional} (with respect to $\Psi$) when
there are forms $\psi_i$, $\psi_j$ in the system that are affinely related
modulo $p$.
If $a_i,a_j>0$, then considering the number of solutions 
$n\in(\Z/p^{\max(a_i,a_j)}\Z)^d$ to
$\psi_i(n) \equiv 0 \Mod{p^{a_i}}$, $\psi_j(n)\equiv 0 \Mod{p^{a_j}}$
yields $\alpha(p^{a_1}, \dots, p^{a_t}) \leq p^{-a_i - a_j}$ if $\psi_i$
and $\psi_j$ are not affinely related. 
Thus, if $p$ is not an exceptional prime, one has, 
with $a_{\max}:= \max_i a_i$,
\begin{equation}\label{alpha-bound}
\alpha(p^{a_1}, \dots, p^{a_t}) \leq p^{-a_{\max} - 1}~,
\end{equation}
if there are at least two non-zero exponents.

\begin{lemma}[Contribution from dependent divisibility events]
\label{dependent_div_events}
Let $\Psi$ be as above and let $p$ be an unexceptional prime. Then
$$
 \sum_{\substack{a_1, \dots, a_t \geq 0 \\ 
   \text{at least two $a_i \not= 0 $} }} \alpha(p^{a_1}, \dots, p^{a_t})
 \ll_{t,d,L} \frac{1}{p^{2}}~.
$$
\end{lemma}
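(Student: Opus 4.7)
The plan is to apply the pointwise bound \eqref{alpha-bound} for unexceptional primes and then organise the sum by the value of the largest exponent $A := \max_i a_i$.

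First I would observe that, for any tuple with at least two nonzero exponents, estimate \eqref{alpha-bound} yields $\alpha(p^{a_1},\dots,p^{a_t}) \leq p^{-A-1}$, and in particular forces $A \geq 1$. Next I would partition the sum according to the value of $A$ and count the tuples: the number of $(a_1,\dots,a_t) \in \N^t$ with $\max_i a_i = A$ equals $(A+1)^t - A^t$, which is at most $t(A+1)^{t-1}$ (pick a coordinate attaining the value $A$ and fill in the remaining $t-1$ coordinates freely from $\{0,1,\dots,A\}$).

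Combining these two inputs reduces the lemma to the geometric-type bound
\[ \sum_{A \geq 1} t(A+1)^{t-1} p^{-A-1} \ll_{t} p^{-2}, \]
which follows by factoring out $p^{-2}$ and using $p \geq 2$ to majorise $p^{-(A-1)} \leq 2^{-(A-1)}$; this leaves a convergent series $\sum_{A \geq 1}(A+1)^{t-1} 2^{-(A-1)}$ whose value depends only on $t$. The dominant contribution is already the $A=1$ term, of size $O_t(p^{-2})$; the tail $A \geq 2$ contributes $O_t(p^{-3})$ and is negligible.

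I do not anticipate any real obstacle: the substantive input, namely the improved divisibility bound \eqref{alpha-bound} for unexceptional primes, is already in place, and the rest is tuple-counting plus a convergent series estimate. The implicit constant comes out depending only on $t$, hence is certainly $\ll_{t,d,L}$ as claimed in the statement.
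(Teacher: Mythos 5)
Your proof is correct and follows essentially the same route as the paper: invoke the bound \eqref{alpha-bound} $\alpha(p^{a_1},\dots,p^{a_t})\leq p^{-\max_i a_i-1}$ for unexceptional primes, count tuples by the value of the maximal exponent, and sum the resulting series. Your way of finishing the series estimate (factoring out $p^{-2}$ and majorising $p^{-(A-1)}$ by $2^{-(A-1)}$ to get a constant depending only on $t$, uniformly in $p$) is in fact a little cleaner than the paper's regrouping, which needs a separate threshold $p>p_0$.
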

\begin{proof}
The number of $t$-tuples $(a_1, \dots, a_t)$ of non-negative integers with
$\max_i a_i = j$ is at most $tj^{t-1}$. This together with the bound
\eqref{alpha-bound} yields
$$
 \sum_{\substack{a_1, \dots, a_t \geq 0 \\ 
   \text{at least two $a_i \not= 0 $} }} \alpha(p^{a_1}, \dots, p^{a_t})
 \ll_{L,t,d} \sum_{j\geq 1} \frac{j^t}{p^{j+1}} 
 = \sum_{k\geq 2} \frac{1}{p^{k}} 
   \left( \frac{(2k-1)^t}{p^{k-2}} + \frac{(2k)^t}{p^{k-1}} \right)~.
$$ 
There is $p_0$ such that whenever $p>p_0$ then all the brackets in
the last sum are less than $1$, except the bracket for $k=2$.
Thus, for $p>p_0$
$$
 \sum_{\substack{a_1, \dots, a_t \geq 0 \\ 
   \text{at least two $a_i \not= 0 $} }} \alpha(p^{a_1}, \dots, p^{a_t})
 \ll_{L,t,d} \frac{1}{p^2} + \sum_{k\geq 3}\frac{1}{p^k} 
 \ll_{L,t,d} \frac{1}{p^2}~.
$$
\end{proof}

The following lemma immediately implies the convergence of 
$\prod_p \beta_p$ whenever $\Psi$ contains no two forms $\psi_i$ and
$\psi_j$ that are affinely dependent, and thus every exceptional prime is
bounded by $O_{t,d,L}(1)$. 
\begin{lemma}
Let $\Psi$ be as above and let $p$ be an unexceptional prime, then
\begin{equation}\label{beta_p-bound}
\beta_p = 1 + O_{t,d,L}(p^{-2})~.
\end{equation}
\end{lemma}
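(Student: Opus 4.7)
The plan is to decompose the sum $\sum_{a_1,\dots,a_t \in \N} \alpha(p^{a_1},\dots,p^{a_t})$ defining $\beta_p$ according to the number of nonzero exponents, extract an exact expression for the `diagonal' contribution, and then exploit a cancellation with the Euler factor $(1-p^{-1})^t$.

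First I would split the sum into three ranges: (a) $a_1 = \dots = a_t = 0$; (b) exactly one $a_j$ is nonzero; (c) at least two $a_i$ are nonzero. Range (a) contributes $\alpha(1,\dots,1)=1$. For range (c), Lemma \ref{dependent_div_events} gives a contribution of size $O_{t,d,L}(p^{-2})$ as soon as $p$ is unexceptional. For range (b), I would use that, since $\Psi$ is non-degenerate with coefficients bounded by $L$, every $\dot\psi_j$ has some coefficient coprime to $p$ provided $p$ is larger than an explicit constant $p_0(t,d,L)$; then the single-form count gives $\alpha(p^{0},\dots,p^{a_j},\dots,p^{0}) = p^{-a_j}$ by \eqref{alpha-bound-0}. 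Summing over $j$ and $a_j \geq 1$ yields $t\bigl(p^{-1}+p^{-2}+\dots\bigr) = t/(p-1)$. Combining the three ranges gives
\[
\sum_{a_1,\dots,a_t \in \N} \alpha(p^{a_1},\dots,p^{a_t})
 = 1 + \frac{t}{p-1} + O_{t,d,L}(p^{-2}).
\]

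Second, I would expand the Euler factor as $(1-p^{-1})^t = 1 - t p^{-1} + O_t(p^{-2})$ and note $t/(p-1) = t/p + O(p^{-2})$. Multiplying out,
\[
\beta_p = \bigl(1 - tp^{-1} + O_t(p^{-2})\bigr)\bigl(1 + tp^{-1} + O(p^{-2})\bigr) = 1 + O_{t,d,L}(p^{-2}),
\]
since the cross terms $\pm t/p$ cancel and the remaining terms are $O(p^{-2})$. This handles all $p > p_0(t,d,L)$.

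Finally, for the finitely many unexceptional primes $p \leq p_0(t,d,L)$, the claim is absorbed into the implicit constant: one only needs to check that $\beta_p$ itself is bounded by $O_{t,d,L}(1)$, which follows from the same decomposition (the range (b) contribution is now $O_{t,d,L}(1)$ rather than $t/(p-1)$, since losing a bounded $p$-adic valuation in the coefficients of $\dot\psi_j$ costs only a bounded factor, and range (c) is still controlled by Lemma \ref{dependent_div_events}). The only real point of work is the argument for $p > p_0$; the main obstacle is just keeping track of the cancellation, not any deeper arithmetic.
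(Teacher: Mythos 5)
Your proof is correct and follows essentially the same route as the paper: split the sum over exponent tuples into the zero tuple, exactly one nonzero exponent (contributing $t/(p-1)$ via \eqref{alpha-bound-0}), and at least two nonzero exponents (controlled by Lemma \ref{dependent_div_events}), then cancel the $\pm t/p$ terms against the Euler factor. Your extra remark handling the finitely many unexceptional primes $p\leq p_0(t,d,L)$, where \eqref{alpha-bound-0} need not be an exact identity, is a point the paper's proof leaves implicit, and your treatment of it is sound.
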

\begin{proof}
By Lemma \ref{dependent_div_events} and the bound \eqref{alpha-bound-0}
\begin{align*}
 \beta_p 
&= (1-p^{-1})^t \sum_{a_1, \dots, a_t \in \N}
   \alpha(p^{a_1},\dots,p^{a_t}) \\
&= \left(1 - \frac{t}{p} + O_{t,d,L}(p^{-2})\right) 
   \left(1 + \frac{t}{p} + O_{t,d,L}(p^{-2})\right) \\
&= 1 + O_{t,d,L}(p^{-2})
\end{align*}
which proves the result.
\end{proof}

\section{Some arithmetical lemmas and a reduction}
\label{section: arithmetical lemmas}

In this section we record for later reference some early lemmas from
\cite{erdos}, adapted to our purposes, and deduce a reduction of the Main
Theorem.

\begin{lemma}[$k$-th moment bound for the divisor function $\tau$]
\label{divisor-bound}
Let $k$ be an integer and let
$\Psi=(\psi_1, \dots, \psi_t): \Z^m \to \Z^t$ be a system of
affine-linear forms whose linear coefficients are bounded by $L$.
Suppose that $K \subseteq [-N,N]^d$ is a convex set such that
$\Psi(K)\subseteq [1,N]^t$.
Then
$$\E_{n \in \Z^m \cap K} \prod_{i \in [t]} \tau^k(\psi_i(n)) 
\ll_{t,m,L} (\log N)^{O_{k,t}(1)}~.$$
\end{lemma}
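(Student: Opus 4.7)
The plan is to bound the unnormalised sum
\[
S := \sum_{n \in K \cap \Z^m} \prod_{i=1}^t \tau(\psi_i(n))^k
\]
by $O_{t,m,L}(N^m (\log N)^{O_{k,t}(1)})$ and then divide by $|K\cap\Z^m|\asymp N^m$ (consistent with the regime $\vol(K)\asymp N^d$ in which the Main Theorem is informative). The first move is H\"older's inequality applied to the $t$ factors,
\[
S \le \prod_{i=1}^t \Bigl( \sum_{n \in K \cap \Z^m} \tau(\psi_i(n))^{tk} \Bigr)^{1/t},
\]
which reduces the task to bounding a single one-form moment.

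The one-dimensional input needed is the classical estimate: for any $K \ge 1$, $|a| \le L$ and any residue $b$,
\[
\sum_{\substack{n \le N \\ n \equiv b\,(a)}} \tau(n)^K \ll_{K,L} N (\log N)^{2^K - 1}.
\]
This follows from the pointwise inequality $\tau(n)^K \le \tau_{2^K}(n)$ (where $\tau_J$ is the $J$-fold divisor function) together with the standard mean value estimate $\sum_{n\le N}\tau_J(n)\ll N(\log N)^{J-1}$; the restriction to APs of bounded modulus costs only a constant and can be handled either directly or by Shiu's theorem.

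To bound $\sum_{n\in K\cap \Z^m}\tau(\psi_i(n))^{tk}$, pick a coordinate $\nu$ whose coefficient $c_\nu$ in $\dot\psi_i$ is non-zero; such a $\nu$ exists because $\psi_i$ is non-constant, and $|c_\nu|\le L$. Writing $n=(n',n_\nu)$ with $n'\in\Z^{m-1}$, for each fixed $n'$ the inner sum is over $n_\nu$ running through an interval of length at most $2N$, and $\psi_i(n',n_\nu)$ then runs through an AP with common difference $c_\nu$ taking values in $[1,N]$. The one-variable estimate above yields
\[
\sum_{n_\nu}\tau(\psi_i(n',n_\nu))^{tk}\ll_{k,t,L} N(\log N)^{2^{tk}-1},
\]
and summing over $n'\in [-N,N]^{m-1}\cap\Z^{m-1}$ produces $O(N^m (\log N)^{O_{k,t}(1)})$.

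Combining this with H\"older gives $S\ll_{t,m,L} N^m(\log N)^{O_{k,t}(1)}$, and dividing by $|K\cap\Z^m|\asymp N^m$ delivers the lemma. There is no deep obstacle: the only mild technical point is securing the one-variable divisor moment in arithmetic progressions with implied constant independent of $N$, which is standard material.
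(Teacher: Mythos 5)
Your proof is correct and reaches the same bound $(\log N)^{2^{tk}-1}$, but after the common first step (H\"older's inequality, which the paper also uses to reduce to a single form) the two arguments diverge. The paper stays in the lattice-point setting: it opens $\tau^{tk}(\psi_i(n))$ as a sum over $tk$-tuples of divisors, applies the density estimate $\E_{n\in\Z^m\cap K}\prod_j 1_{d_j\mid\psi_i(n)}\ll 1/\lcm(d_1,\dots,d_{tk})$, and then bounds $\sum_{d_1,\dots,d_{tk}\le N}1/\lcm(d_1,\dots,d_{tk})$ by a combinatorial factorisation of the lcm into $2^{tk}-1$ factors indexed by nonempty subsets of $[tk]$. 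You instead slice $K$ along a coordinate in which $\dot\psi_i$ has a non-zero coefficient, reduce to a one-variable sum of $\tau(m)^{tk}$ over integers $m\le N$ (injectivity of $n_\nu\mapsto\psi_i(n',n_\nu)$ plus positivity lets you bound the progression sum by the full sum $\sum_{m\le N}\tau(m)^{tk}$, so Shiu's theorem is indeed not needed), and quote the classical moment bound via $\tau^K\le\tau_{2^K}$ and $\sum_{m\le N}\tau_J(m)\ll N(\log N)^{J-1}$. Your route imports a standard external estimate and is more elementary in flavour; the paper's is self-contained in the multidimensional setting, and its lcm computation is essentially a proof of that classical estimate in disguise. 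One shared caveat: both arguments need $|K\cap\Z^m|\gg N^m$ when converting the count into an expectation --- you flag this explicitly, while the paper buries the same assumption in the unproved estimate $\E_{n\in\Z^m\cap K}1_{d\mid\psi_i(n)}\ll 1/d$, which fails for small $K$ and $d$ close to $N$; since the Main Theorem is only informative when $\vol(K)\gg N^d$, this is harmless in both cases.
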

\begin{proof}
Recall that we assumed that no affine-linear form is constant.
H{\"o}lder's inequality and the estimate 
$\E_{n \in \Z^m \cap K} 1_{d|\psi_i(n)} \ll_{m,L} \frac{1}{d}$ 
imply the lemma as follows.
\begin{align*}
  \E_{n \in \Z^m \cap K} \prod_{i \in [t]} \tau^k(\psi_i(n)) 
 &\leq \prod_{i \in [t]} 
   \bigg(\E_{n \in \Z^m \cap K} 
    \tau^{kt}(\psi_i(n))\bigg)^{1/t}\\
 &\leq \prod_{i=1}^t \left(
    \sum_{\substack{d_1,\dots, d_{tk} \leq N }}
    \E_{n \in \Z^m \cap K} \prod_{j = 1}^{tk} 1_{d_j|\psi_i(n)} 
    \right)^{1/t}\\
 &\ll_{m,L} \sum_{\substack{d_1,\dots, d_{tk} \leq N }}
    \frac{1}{\lcm(d_1, \dots, d_{tk})} ~.
\end{align*}
If a fixed positive integer $d$ is the least common multiple of $\ell$
numbers $d_{1},\dots,d_{\ell}$, then to each choice of $(d_j)_{j\in
[\ell]}$ there corresponds a unique factorisation of $d$ into
$(2^{\ell}-1)$ factors $d^{(I)} \geq 1$, one for each non-empty subset $I$
of indices, which are defined by the property that $p^a\|d^{(I)}$ implies
that there is $b\geq 0$ such that 
$p^{b}\|\lcm(d_j : j \not\in I)$ and $p^{b+a}\|\gcd(d_j : j \in I)$.
By this factorisation, the last expression in the chain above is
seen to be bounded by
\begin{align*}
 &\leq \prod_{j=1}^{2^{tk}-1}\sum_{e_j \leq N }
    \frac{1}{e_j} \ll (\log N)^{2^{tk}-1}~.
\end{align*}

\end{proof}

\begin{lemma}[``rough'' numbers are rare, \cite{erdos}]\label{(i)-bound}
Suppose $\Psi=(\psi_1,\dots,\psi_t):\Z^d \to \Z^t$ is affine-linear and
its linear coefficients are bounded by $L$.
Let $K \subseteq [-N,N]^d$ be a convex body such that
$\Psi(K)\subset[1,N]^t$.
Let $C_1 > 1$ be a parameter and let $S_1$ be the set of $m \in \Z$
which are divisible by a large
proper prime power $p^a > \log^{C_1} N$, $a\geq2$.
Then the density of $n \in \Z^d \cap K$
such that $\psi_i(n) \in S_1$ for at least one $i \in [t]$ is bounded by 
$$\sum_{i \in [t]} \E_{n \in \Z^d \cap K} 1_{\psi_i(n) \in S_1}
\ll_{L,d,t} \log^{-C_1/2} N~.$$
\end{lemma}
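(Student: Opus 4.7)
My plan is to count directly, avoiding a per-prime-power application of Lemma \ref{volume-packing}: such an application would give an error $O(N^{d-1} p^a)$ per prime power which, upon summing over admissible $p^a \leq N$, grows like $N^{d+1/2}/\log N$ and swamps the main term. Instead, I split the task into two stages: first bounding the number of integers $m \in [1,N]$ that lie in $S_1$, and then bounding, for each such $m$, the preimage count $|\{n \in K \cap \Z^d : \psi_i(n) = m\}|$.

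After a union bound over $i \in [t]$, fix $i$. Since $\dot\psi_i$ has some non-zero coefficient, in the coordinate $n_j$ say, for each fixed $m \in \Z$ the equation $\psi_i(n) = m$ determines $n_j$ uniquely from the remaining $d-1$ coordinates, so
\[
 |\{n \in K \cap \Z^d : \psi_i(n) = m\}| \leq (2N+1)^{d-1}.
\]
Using $\psi_i(K) \subseteq [1,N]$ and summing over $m \in S_1 \cap [1,N]$ gives
\[
 |\{n \in K \cap \Z^d : \psi_i(n) \in S_1\}| \leq (2N+1)^{d-1}\, |S_1 \cap [1,N]|.
\]

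To estimate $|S_1 \cap [1, N]|$ I apply a union bound over prime powers:
\[
 |S_1 \cap [1, N]| \leq \sum_{\substack{a \geq 2, \, p \text{ prime} \\ \log^{C_1} N < p^a \leq N}} \lfloor N/p^a \rfloor \leq N \sum_{a \geq 2} \sum_{\substack{p > \log^{C_1/a} N \\ p^a \leq N}} p^{-a}.
\]
The $a=2$ contribution is $\ll N/\log^{C_1/2} N$ by comparison with $\int_{\log^{C_1/2} N}^\infty x^{-2}\,dx$; for $a \geq 3$ the inner sum is $\ll \log^{-C_1(a-1)/a} N$, which is smaller and summable in $a$. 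The $O_L(1)$ primes $p \leq L$ are treated separately and contribute only $O_L(\log^{-C_1} N)$. Combining this with the previous step and dividing by $|K \cap \Z^d| \asymp \vol(K)$ (under the working assumption $\vol(K) \asymp N^d$ in which the lemma is non-trivial) gives the required bound, and summing over $i \in [t]$ yields the claimed inequality.

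I do not anticipate any serious obstacle; the only conceptual step is to replace the natural (but lossy) per-prime-power volume-packing estimate with the two-stage count above, an idea in the spirit of Erd\H{o}s's original treatment of $\sum \tau(P(n))$.
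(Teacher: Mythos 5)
Your argument is correct and is essentially the paper's proof with the two counting steps performed in the opposite order: the paper takes the union bound over prime powers first and then invokes the fibre estimate $\E_{n \in \Z^d \cap K} 1_{p^a \mid \psi_i(n)} \ll_{L,d} p^{-a}$ (keeping, for each $p$, only the least exponent $a(p)\geq 2$ with $p^{a(p)} > \log^{C_1} N$), whereas you fibre over the value $m=\psi_i(n)$ first and then take the union bound inside $[1,N]$; the arithmetic, namely splitting the primes at $\log^{C_1/2} N$, is identical, and neither argument actually needs Lemma \ref{volume-packing}. One small imprecision: the bounds $\log^{-C_1(a-1)/a} N$ for $a \geq 3$ do not tend to zero as $a$ grows, so they are not literally ``summable in $a$''; you need either the remark that only $a \leq \log_2 N$ can occur together with $C_1$ large (which suffices for the application via Lemma \ref{paucity}), or, better, to group by $p$ and retain only the least admissible exponent as the paper does, after which the $a \geq 3$ contribution is $\ll \log^{-2C_1/3} N$ for every $C_1>1$.
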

\begin{proof} 
This is a straightforward adaption of the one-dimensional estimate.
Note that $\E_{n \in \Z^d \cap K}
1_{p^a|\psi_i(n)} \ll_{L,d}
p^{-a}$ for all primes $p$. Let $a(p)$ be the smallest exponent $a\geq2$
for which $p^a > \log^{C_1} N$. We then have
\begin{align*}
 \sum_{i \in [t]} \E_{n \in \Z^d \cap K} 1_{\psi_i(n) \in S_1}
&\leq \sum_{p} \E_{n \in \Z^d
 \cap K} \sum_{i\in[t]} 1_{p^{a(p)}|\psi_i(n)}\\
 &\ll_{L,d} \sum_{p \leq \log^{C_1/2} N} t \log^{-C_1} N + 
     \sum_{p > \log^{C_1/2} N} t p^{-2}\\
 &\ll_{L,d,t} \log^{-C_1/2} N~.
\end{align*}
\end{proof}

\begin{lemma}[``smooth'' numbers are rare, \cite{erdos}]\label{(ii)-bound}
Let $\Psi$ and $K$ be as in the previous lemma, let $0<\gamma < 1$ be a
parameter
and let $S_2$ be the set of smooth $m \in \N$, that is, $m$ for which
\begin{equation}\label{product}
\prod_{\substack{p^a\|m \\ p \leq N^{1/(\log \log N)^3}}} p^a
\geq N^{\gamma/\log \log N}~. 
\end{equation}
Then the density of $n \in \Z^d \cap K$ for which $\psi_i(n) \in S_2$ for
at least one $i \in [t]$ is bounded by
$$ \sum_{i\in[t]}
\E_{n \in \Z^d \cap K} 1_{\psi_i(n) \in S_2}
\ll_{L,d,t,\gamma, C_1} \log^{-C_1/2} N~,$$
where $C_1$ is as in Lemma \ref{(i)-bound}.
\end{lemma}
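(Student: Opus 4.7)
The plan is to deploy the Rankin-trick argument underlying Erd\H{o}s's original bound in \cite{erdos}. Set $y := N^{1/(\log\log N)^3}$ and $T := N^{\gamma/\log\log N}$, and for $m\in\N$ let $u(m) := \prod_{p^a\|m,\,p\leq y} p^a$ denote its $y$-smooth part; then $m \in S_2$ exactly when $u(m) \geq T$. For a small parameter $\sigma\in(0,1)$ to be optimised at the end, the first step is to apply the trivial inequality $1_{u(m)\geq T} \leq (u(m)/T)^\sigma$, reducing the question to an estimate for the moment $\E_{n \in \Z^d \cap K} u(\psi_i(n))^\sigma$.

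The second step is to convert this moment into an Euler product. Since $u^\sigma$ is multiplicative, writing $u^\sigma = 1 \ast c_\sigma$ as a Dirichlet convolution gives a multiplicative $c_\sigma$ supported on $y$-smooth integers with $c_\sigma(p^a) = p^{\sigma a}(1-p^{-\sigma})$. Interchanging summation and applying the single-form divisibility bound $\E 1_{d|\psi_i(n)} \ll_L 1/d$ for $d \leq N$ (which follows from Lemma~\ref{volume-packing}) converts the moment into the Euler product
$$\prod_{p\leq y} \frac{1-p^{-1}}{1-p^{\sigma-1}}~.$$
Taylor-expanding each factor (noting $\sigma\log p$ is small for $p\leq y$ once $\sigma$ is taken to be $o(1/\log y)$) and applying Mertens' theorem yields the estimate $\exp(\sigma\log y\,(1+o(1)))$.

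Combining the two steps,
$$\E 1_{\psi_i(n)\in S_2} \;\ll_L\; (y/T)^\sigma \cdot e^{o(\sigma\log y)}~,$$
and since $\log(y/T) \asymp -\gamma\log N/\log\log N$, the remaining task is to choose $\sigma$ on the correct scale. Setting $\sigma := C_1(\log\log N)^2/(\gamma\log N)$ (which indeed satisfies $\sigma\log y \to 0$) delivers the required bound $\ll \log^{-C_1/2}N$; a union bound over $i\in[t]$ completes the proof.

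The main obstacle is the Euler-product manipulation: one has to verify that the Taylor expansion of $\log[(1-p^{-1})/(1-p^{\sigma-1})]$ is small enough in $\sigma$ that the resulting factor $y^\sigma$ is beaten by $T^{-\sigma}$ for an admissible choice of $\sigma$. The specific scales $T=N^{\gamma/\log\log N}$ and $y=N^{1/(\log\log N)^3}$ in the definition of $S_2$ are tailored precisely to allow such a $\sigma$ to exist.
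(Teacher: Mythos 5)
Your Rankin-trick proof is correct, and it takes a genuinely different route from the paper's. The paper first removes the $n$ with $\psi_i(n)\in S_1$ via Lemma \ref{(i)-bound}; for the remaining $n$, every prime power $p^a\|\psi_i(n)$ occurring in \eqref{product} is at most $\max(N^{1/(\log\log N)^3},\log^{C_1}N)$, so membership in $S_2$ forces $\omega(\psi_i(n))\geq \gamma(\log\log N)^2$, hence $\tau(\psi_i(n))\gg (\log N)^{C_2}$ for any fixed $C_2$, and Markov's inequality applied to $\E_n\tau(\psi_i(n))\ll_L\log N$ finishes. You instead bound the shifted moment $\E\, u(\psi_i(n))^{\sigma}$ of the smooth part directly; your Euler factor $(1-p^{-1})/(1-p^{\sigma-1})$ is right, its logarithm is $\sigma\log p/p + O(\sigma^2\log^2 p/p)+O(p^{-2})$, and with $\sigma=C_1(\log\log N)^2/(\gamma\log N)$ one has $\sigma\log y = C_1/(\gamma\log\log N)\to 0$ while $\sigma\log T = C_1\log\log N$, so your final bound is $(\log N)^{-C_1(1+o(1))}$ --- in fact slightly stronger than what is asked. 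What each approach buys: yours is self-contained (no detour through $S_1$, so no dependence on Lemma \ref{(i)-bound}) and quantitatively sharper; the paper's avoids Euler products entirely, recycling only the first moment of $\tau$ and the same ``many prime factors forces large $\tau$'' observation that later powers Lemma \ref{erdos-lemma}. Both arguments rest on the same single-form input $\E_{n\in\Z^d\cap K}1_{e|\psi_i(n)}\ll_L 1/e$ for $e\leq N$; strictly speaking this needs the direct one-variable slicing argument rather than Lemma \ref{volume-packing} verbatim when $e$ is large compared with $(\vol(K)/N^{d-1})^{1/2}$, but the paper's own Lemmas \ref{divisor-bound}--\ref{(ii)-bound} invoke the identical bound, so you are on equal footing there.
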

\begin{proof} 
Suppose that $\psi_i(n) \in S_2$ but does not belong to the set
$S_1$ from the previous lemma at the same time. 
Then each prime power in the product \eqref{product} for $m=\psi_i(n)$ is
in particular $\ll_{C_1} N^{1/(\log \log N)^3}$. 
Since $\psi_i(n)>N^{\gamma/\log \log N}$, we then have
$$\tau(\psi_i(n)) \geq 2^{\omega(\psi_i(n))} \gg_{C_1} 2^{(\log \log N)^3
\gamma/ \log \log N} \gg_{\gamma, C_2} (\log N)^{C_2}$$
for any positive constant $C_2$. 
For each value of $i$, the bound 
$\E_{n \in \Z^m \cap K} \tau(\psi_i(n)) 
= \sum_{d \leq N} \E_{n \in \Z^m \cap K} 1_{d|\psi_i(n)}
\ll_{L,m} \log N$ implies that this can happen only on a set of
density not exceeding $O((\log N)^{1-C_2})$. 
The result follows with $C_2\geq 1 + C_1/2$.
\end{proof}

The next lemma shows that $S_1$ and $S_2$ are exceptional sets
for the divisor function.
\begin{lemma}[Contribution from the exceptional sets $S_1$ and $S_2$]
\label{paucity} 
Let $\Psi$ be as before and let $C_3\geq1$ be a parameter. 
For sufficiently large $C_1$, we have
$$
 \sum_{i\in[t]}
 \E_{n \in \Z^d \cap K} 
 1_{\psi_i(n) \in S_1 \cup S_2} 
 \prod_{j \in [t]} \tilde\tau(\psi_j(n)) 
 \ll_{t,d,L} (\log N)^{-C_3}~.
$$
\end{lemma}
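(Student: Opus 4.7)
The plan is to apply Cauchy--Schwarz to decouple the rare exceptional set $S_1 \cup S_2$ from the (polylogarithmically large in $L^p$) product of normalised divisor functions. Writing $\tilde \tau(m) = \tau(m)/\log N$, the product in the claim is $(\log N)^{-t} \prod_j \tau(\psi_j(n))$, so the two main ingredients I need are a small-measure bound for the exceptional set and a moment bound for $\tau$ along the forms.

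\textbf{Step 1: Cauchy--Schwarz.} For each fixed $i \in [t]$, Cauchy--Schwarz gives
\begin{align*}
 \E_{n \in \Z^d \cap K} 1_{\psi_i(n) \in S_1 \cup S_2}
  \prod_{j \in [t]} \tilde\tau(\psi_j(n))
 &\leq \bigl(\E_n 1_{\psi_i(n) \in S_1 \cup S_2}\bigr)^{1/2}
       \bigl(\E_n {\textstyle\prod_{j}} \tilde\tau(\psi_j(n))^2 \bigr)^{1/2}.
\end{align*}

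\textbf{Step 2: Bounding the first factor.} By Lemma \ref{(i)-bound} applied to the single form $\psi_i$, the density of $n$ with $\psi_i(n) \in S_1$ is $\ll_{L,d} \log^{-C_1/2} N$, and by Lemma \ref{(ii)-bound} the density of $n$ with $\psi_i(n) \in S_2$ is bounded by the same quantity (choosing the same parameter $C_1$). Thus the first factor on the right is $\ll_{L,d} \log^{-C_1/4} N$.

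\textbf{Step 3: Bounding the second factor.} Since $\tilde\tau(\psi_j(n))^2 = (\log N)^{-2}\tau(\psi_j(n))^2$, Lemma \ref{divisor-bound} with $k = 2$ gives
\[
 \E_n \prod_{j \in [t]} \tilde\tau(\psi_j(n))^2
 = (\log N)^{-2t} \E_n \prod_{j \in [t]} \tau^2(\psi_j(n))
 \ll_{t,d,L} (\log N)^{E_t}
\]
for some constant $E_t = O_t(1)$ (coming from the exponent $O_{k,t}(1)$ of Lemma \ref{divisor-bound} minus $2t$), so the second factor is $\ll_{t,d,L} (\log N)^{E_t/2}$.

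\textbf{Step 4: Combining and choosing $C_1$.} Summing over the $t$ choices of $i$,
\[
 \sum_{i\in[t]}
 \E_n 1_{\psi_i(n) \in S_1 \cup S_2}
 \prod_{j \in [t]} \tilde\tau(\psi_j(n))
 \ll_{t,d,L} t \cdot (\log N)^{-C_1/4 + E_t/2}.
\]
Choosing $C_1$ sufficiently large (depending on $C_3$, $t$, and $E_t$, all of which depend only on $t$ and the parameters in the statement), the exponent is at most $-C_3$, and the claim follows. The only mild subtlety is that the ``sufficiently large $C_1$'' here must accommodate both the losses in Cauchy--Schwarz and the moment growth, but since both are controlled by constants depending only on $t$, this is harmless.
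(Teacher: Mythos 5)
Your proposal is correct and follows exactly the route the paper takes: the paper's proof is a one-line appeal to the Cauchy--Schwarz inequality together with Lemmas \ref{divisor-bound}, \ref{(i)-bound} and \ref{(ii)-bound}, with $C_1$ chosen large enough, which is precisely what you have spelled out. Your version simply makes the bookkeeping of the exponents explicit.
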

\begin{proof}
 This follows by the Cauchy-Schwarz inequality from lemmata
\ref{divisor-bound}, \ref{(i)-bound} and \ref{(ii)-bound} provided $C_1$
is chosen large enough. 
\end{proof}

The previous lemma reduces the task of proving the Main Theorem as
follows. 

\begin{proposition} \label{tau-bar-reduction}
Let $\bar \tau : \Z \to \R$ be any function that agrees with 
$\tilde\tau$ on the complement of $S_1 \cup S_2$ and satisfies 
$0 \leq \bar \tau (n) \leq \tilde \tau (n)$ for 
$n \in S_1 \cup S_2$. 
Then the Main Theorem, that is,
$\sum_{n \in \Z^d \cap K}  \prod_{i\in [t]} \tilde \tau(\psi_i(n)) 
= \vol(K) \prod_p \beta_p + o_{L,t,d}(N^d)$, holds if and only if
under the same conditions
$$\sum_{n \in \Z^d \cap K}  \prod_{i\in [t]} \bar \tau(\psi_i(n)) 
= \vol(K) \prod_p \beta_p + o_{L,t,d}(N^d)~.$$
\end{proposition}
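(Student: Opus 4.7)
The plan is to control the difference between the two sums by a telescoping identity, and then to bound each resulting term using Lemma \ref{paucity}. The key structural facts to exploit are that $\tilde\tau - \bar\tau$ vanishes off $S_1 \cup S_2$ and that $0 \leq \bar\tau \leq \tilde\tau$ pointwise, so any factor of $(\tilde\tau - \bar\tau)(\psi_k(n))$ contributes a free indicator $1_{\psi_k(n) \in S_1 \cup S_2}$, while the remaining factors can be bounded above by $\tilde\tau(\psi_i(n))$.

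Concretely, I would write the telescoping identity
\begin{equation*}
\prod_{i=1}^t \tilde\tau(\psi_i(n)) - \prod_{i=1}^t \bar\tau(\psi_i(n))
= \sum_{k=1}^t \big(\tilde\tau(\psi_k(n)) - \bar\tau(\psi_k(n))\big)
\prod_{i<k} \bar\tau(\psi_i(n)) \prod_{i>k} \tilde\tau(\psi_i(n)).
\end{equation*}
Using $0 \leq \bar\tau \leq \tilde\tau$ together with the support condition on $\tilde\tau - \bar\tau$, the absolute value of the $k$-th summand on the right is at most $1_{\psi_k(n) \in S_1 \cup S_2} \prod_{i \in [t]} \tilde\tau(\psi_i(n))$. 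Summing over $n \in \Z^d \cap K$ and applying Lemma \ref{paucity} with, say, $C_3 = 1$ (which is permitted once $C_1$ is chosen large enough), the total difference is bounded by
\begin{equation*}
\sum_{k\in[t]} \sum_{n \in \Z^d \cap K}
1_{\psi_k(n)\in S_1\cup S_2}\prod_{i \in [t]} \tilde\tau(\psi_i(n))
\ll_{t,d,L} N^d (\log N)^{-1} = o_{t,d,L}(N^d).
\end{equation*}
This identity of error terms shows at once that the asymptotic for $\tilde\tau$ and the asymptotic for $\bar\tau$ are equivalent.

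There is no genuine obstacle here: all of the analytic input is already in Lemma \ref{paucity}, and the proof is pure bookkeeping. The only point that requires a little care is to arrange the telescoping identity so that in the summand indexed by $k$, the factors to the left of position $k$ are $\bar\tau$ and those to the right are $\tilde\tau$ (or any other such splitting), since the uniform bound on each factor by $\tilde\tau(\psi_i(n))$ relies solely on the pointwise inequality $0 \leq \bar\tau \leq \tilde\tau$ and does not require any further information about $\bar\tau$.
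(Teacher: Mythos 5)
Your proposal is correct and is exactly the argument the paper intends (the paper states the proposition as an immediate consequence of Lemma \ref{paucity} without writing out the details): the telescoping identity, the pointwise bound of each summand by $1_{\psi_k(n)\in S_1\cup S_2}\prod_{i\in[t]}\tilde\tau(\psi_i(n))$ using $0\leq\bar\tau\leq\tilde\tau$, and then Lemma \ref{paucity} to make the difference $o_{t,d,L}(N^d)$. No issues.
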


\section{A majorant for the normalised divisor function}
\label{majorant-section}

Suppose that $A \subseteq [N]$ has cardinality $|A|=\delta N$.
Loosely speaking, if $0 < \delta < 1$ is fixed, we refer to such sets $A$,
for $N$ arbitrarily large, as \emph{dense}.
In this case, a sufficient condition for $A$ to contain approximately the
expected number of finite complexity structures is that $A$ is
sufficiently Gowers-uniform. This is to say, the uniformity norm 
$$
 \|1_A -\delta\|_{U^s[N]} 
 := \Bigg( 
    \E_{x \in [N]} 
    \E_{h\in[N]^s} 
    \prod_{\omega \in \{0,1\}^s} 
    (1_A -\delta)(x + \omega \cdot h ) 
    \Bigg)^{1/2^{s}}
$$ 
is small for some $s$ that is determined by the structure one is counting.
For instance, the number of $4$-term arithmetic progressions in a set $A$
of size $|A|=\delta N$ satisfies 
$$\E_{n+3d \leq N} 1_A(n)1_A(n+d)1_A(n+2d)1_A(n+3d) \sim \delta^4~,$$
if $\|1_A - \delta \|_{U^3}$ is small.
These results remain to be true when one replaces $1_A$ by a function
$f:\N \to \C$ that is bounded independent of $N$ and that has asymptotic
density $\E_{n\leq N}f(n) = \delta + o(1)$. 

If $f$ fails to satisfy these properties, that is, if it is
either sparse or unbounded, then a \emph{transference principle} is
required. 
Such a principle was established by Green and Tao in
\cite{green-tao-longprimeaps,green-tao-linearprimes}
and is based on the observation that a sparse set that is relatively
dense in a random-like set behaves in the same way as a dense set.

The first step is to replace the function $f$ by a model $\tilde f$ that
has asymptotic density. Examples are the replacement of the characteristic
function of primes by the von Mangoldt function or the replacement of
$\tau$ by ${\tilde \tau} $ in our case.

An application of the transference principle requires a 
\emph{majorant} function $\nu : [N] \rightarrow \C$ with 
$|\tilde f(n)| \leq C\nu(n)$ for all $n$ 
which satisfies the linear forms and correlation conditions of 
\cite[\S 6]{green-tao-linearprimes}, two conditions which are designed to
model a random measure. (We recall precise statements in Section
\ref{linear forms section}.)
This majorant replaces the ``random-like set'' from the observation.
The relative density condition from the observation is also present in the
generalised case. 
Indeed, part of the definition of $\nu$ is that 
$\E_{n \leq N} \nu(n) = 1 + o(1)$, and we further replaced the
original function $f$ by a dense model $\tilde f$. Thus we have
$$ \delta~ \E_{n \leq N} \nu(n) 
 = \delta(1+o(1)) 
 \leq (1+o(1)) \E_{n \leq N} \tilde f (n) 
 \leq C(1+o(1)) \E_{n \leq N} \nu(n) $$
and hence $\tilde f$ can be regarded as being `dense' in $\nu$.

The Koopman--von Neumann theorem
\cite[Prop.10.3]{green-tao-linearprimes}, 
or \cite[Prop.8.1]{green-tao-longprimeaps}, then provides a result
corresponding to the above observation: 
Any function $f$ with asymptotic density $\E f$ that is dominated by a
pseudorandom measure $|f(n)| \leq \nu(n)$ may be decomposed as a sum
$f=f_1+f_2$ where $f_1$ is bounded and $f_2 - \E f_2$ has small uniformity
norms. 
Thus, $f-\E f$ has small uniformity norms if and only if the bounded
function $f_1 - \E f_1$ has, and one can apply the results from the dense
setting to $f_1$. 
That is, we have `transferred' the problem to the dense setting, provided
there is a way to deal with the error $f_2 - \E f_2$. 
Such a way is provided by \cite[Cor. 11.6]{green-tao-linearprimes}.

In the case $f = \Lambda$, Green and Tao \cite{green-tao-longprimeaps}
construct, building upon work of Goldston and  Y{\i}ld{\i}r{\i}m,
the required pseudorandom majorant by modifying the majorant the proof of
Selberg's sieve is based on.
The key property of such a majorant resulting from a Selberg sieve is that
it has the form of a \emph{truncated} divisor sum
\[ \nu(n) := \sum_{d | n, d \leq N^{\gamma}} a_d\] for certain 
coefficients $a_d$ and where $\gamma> 0$ is a fixed constant that may be
chosen as small as necessary. 
Its importance lies in the fact that summing only over small divisors
ensures that the divisibility events that occur when checking the linear
forms condition are almost independent, which allows us to deduce
\emph{asymptotics} as required for the linear forms
condition.

Our aim in this section is to show that a majorant of similar structure
can be constructed in the case of the divisor function ${\tilde \tau} $.
A first attempt, given the above discussion, might be to take 
\[ \nu(n) = {\tilde \tau} _{\gamma}(n) 
   :=\frac{1}{\gamma \log N} \sum_{d | n : d \leq N^{\gamma}} 1~.\] 
Unfortunately, however, a result of Tenenbaum \cite[Cor.3]{tenenbaum}
asserts that if $\gamma < 1/2$ then for every $\lambda$ the majorant
condition ${\tilde \tau} (n) \leq \lambda {\tilde \tau} _{\gamma}(n)$
fails to hold on a
positive proportion of $n \in [N]$. 
A modification of this idea is therefore required.
It turns out that the proportion of such `bad' $n$ can be
bounded by $\lambda^{-c\log \log \lambda}$ for some $c = c(\gamma) > 0$.
Denoting by $X(\lambda)$ the set of bad $n$ for $\lambda$, then the bound
on $|X(\lambda)|$ allows us to sum $\sum_{i\geq 1} \lambda_i
1_{X(\lambda_i)}(n) {\tilde \tau} _{\gamma} (n)$ for suitable sequences
$(\lambda_i)$.

The idea behind this is due to Erd\H{o}s \cite{erdos}: 
Let $N^{\gamma} < n \leq N$. 
Considering the distribution of prime factors of such a number, one
expects that ${\tilde \tau} (n)$ is essentially controlled by the number
of small divisors ${\tilde \tau} _{\gamma}(n)$. 
But when is this actually the case?
A sufficient condition may be obtained as follows.
Write $n = p_1^{a_1} \dots p_t^{a_t}$, where the primes are ordered by
increasing size, and let $p_1^{a_1} \dots p_{j+1}^{a_{j+1}}$ be the first
initial partial product that exceeds $N^{\gamma}$.
Then we are guaranteed control of ${\tilde \tau} (n)$ by ${\tilde \tau}
_{\gamma}(n)$ provided
$p_{j+1}$ is large,
since $n$ has at most $\frac{\log N}{\log p_{j+1}}$ prime factors
$>p_{j+1}$. The quality of control depends on the size of $p_{j+1}$.
Suppose $n$ is a `bad' integer for which the control is of ${\tilde \tau}
(n)$ by
${\tilde \tau} _{\gamma}(n)$ is not good enough, thus, $p_{j+1}$ is quite
small. 
The smaller $p_{j+1}$ is, the worse is the control, but, also, the denser
gets the distribution of prime factors of the large initial product of
$n$. 
Excluding the sparse set of numbers that have a large proper prime power
divisor, one expects to find some structure in the `dense' set of prime
factors $<p_{j+1}$.
A pigeonhole argument shows that there is some short interval that
contains quite a large number of those prime factors $<p_{j+1}$; a very
sparse event.

The prime divisor structure of `bad' integers $n$ that this proof
strategy provides will be important later on, because it allows us to
\emph{explicitly} describe the exceptional set for the inequality 
${\tilde \tau}(n) \leq \lambda {\tilde \tau} _{\gamma}(n)$ at level
$\lambda$.

The following lemma is a reformulation of Erd\H{o}s's observations from
\cite{erdos}.

\begin{lemma}[Erd\H{o}s]\label{erdos-lemma}
Let $n \leq N$ and suppose that ${\tilde \tau} (n) \geq 2^s{\tilde \tau}
_{\gamma}(n)$ for
some $s > 2/\gamma$. Then one of the following three alternatives holds:
\begin{enumerate}
\item $n$ is excessively ``rough'' in the sense that it is divisible by 
some prime power $p^a$, $a \geq 2$, with $p^a > \log^{C_1} N~;$ 
\item $n$ is excessively ``smooth'' in the sense that if 
$n = \prod_{p} p^a$ then
\[ \prod_{p \leq N^{1/(\log \log N)^3}} p^a \geq N^{\gamma/\log\log N}~;\]
\item $n$ has a ``cluster'' of prime factors in the sense that there is an
$i$,  $\log_2 s - 2 \leq i \ll \log \log \log N$ such that $n$ has at
least $\gamma s(i + 3 - \log_2 s)/100$ prime factors in the superdyadic
range $I_i := [N^{1/2^{i+1}}, N^{1/2^i}]$ and is not divisible by the
square of any prime in this range.
\end{enumerate}
\end{lemma}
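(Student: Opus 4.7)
The plan is to argue by the contrapositive: assume that $n$ satisfies none of (i), (ii), (iii) and deduce that $\tilde\tau(n) < 2^s\tilde\tau_\gamma(n)$, which unfolding the definitions is equivalent to $\tau(n)/\tau_\gamma(n) < 2^s/\gamma$. First I would reduce to a squarefree model. Writing $n = p_1^{a_1}\cdots p_t^{a_t}$ with $p_1 < \cdots < p_t$, the failure of (i) forces every prime power $p^a\|n$ with $a \geq 2$ to satisfy $p^a \leq \log^{C_1} N$. Hence the squareful part of $n$ contributes at most $(\log N)^{O(1)}$ to both $\tau(n)$ and $\tau_\gamma(n)$, a factor that is absorbed by the margin in $s > 2/\gamma$. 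We may therefore treat $n$ as squarefree, so that $\tau(n) = 2^t$.

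Next I would use the failure of (ii) to absorb the very small primes. Let $T_1 = \{p_i : p_i \leq N^{1/(\log\log N)^3}\}$; the failure of (ii) gives $\prod_{p \in T_1} p < N^{\gamma/\log\log N}$, so every subset of $T_1$ already yields a divisor far below $N^\gamma$. These subsets can be freely concatenated with other small divisors, which shows $\tau_\gamma(n) \geq 2^{|T_1|}\, \tau_{\gamma(1-o(1))}(n/\prod_{p\in T_1}p)$. The ratio $\tau(n)/\tau_\gamma(n)$ is therefore controlled by the ``tail" of $n$ supported on primes $> N^{1/(\log\log N)^3}$, i.e., entirely inside $\bigcup_{i\ll \log\log\log N} I_i$. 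Within this tail, let $j$ be maximal with $p_1 \cdots p_j \leq N^{\gamma(1-o(1))}$; then $\tau_\gamma(n) \gg 2^j$, and the contrapositive hypothesis forces $t - j \geq s$. The smallest tail prime $p_{j+1}$ lies in some $I_{i_0}$, and since all tail primes are $\geq N^{1/2^{i_0+1}}$ with product $\leq N$, one gets $t - j \leq 2^{i_0+1}$, whence $i_0 \geq \log_2 s - 2$, justifying the lower cut-off on $i$ in (iii).

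The heart of the argument is then a weighted pigeonhole over the dyadic ranges. Let $r_i$ denote the number of tail prime factors of $n$ in $I_i$; the preceding steps give $\sum_i r_i \geq t - j \geq s$ and confine the sum to the range $0 \leq i \ll \log\log\log N$. For $i < \log_2 s - 2$ one has the trivial bound $r_i \leq 2^{i+1}$, hence $\sum_{i < \log_2 s - 2} r_i < s/2$. Negating (iii) gives $r_i < \gamma s(i+3-\log_2 s)/100$ for $i$ in the stated range, and summing these bounds against the trivial upper bound should produce $\sum_i r_i < s$, contradicting $t - j \geq s$ and so establishing the contrapositive.

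The main obstacle is precisely this last weighted pigeonhole: matching the linearly-growing lower bound $\gamma s(i+3-\log_2 s)/100$ of alternative (iii) to the contribution from each $I_i$. A naive pigeonhole yields only a cluster of size $\sim s/\log\log\log N$, which is much too weak. The correct accounting must combine the trivial $r_i \leq 2^{i+1}$ (dominant for small $i$ near $\log_2 s - 2$, where it is in fact of order $s$) with the geometric factor $(i+3-\log_2 s)/2^{i+1}$, so that upon substitution $k = i+3-\log_2 s$ the sum telescopes like $\sum_{k\geq 1} k/2^k = 2$ and stays bounded. Verifying that the specific constants in (iii) drop out of this balance, and that the upper cut-off $i \ll \log\log\log N$ coming from (ii) is enough to close the argument, constitute the technical heart. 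By comparison, the squarefree reduction, the absorption of the smooth part, and the location of the threshold prime $p_{j+1}$ are routine bookkeeping.
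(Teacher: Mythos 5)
Your setup (reduce to the squarefree case via the failure of (i), absorb the $N^{1/(\log\log N)^3}$-smooth part via the failure of (ii), locate the threshold index $j$ with $p_1\cdots p_j\leq N^{\gamma}<p_1\cdots p_{j+1}$, and pigeonhole over the superdyadic ranges $I_i$) matches the paper's, but the central quantitative step is set up with the wrong quantity and cannot be closed as described. You measure the \emph{cardinality} of the prime factors: your lower bound is $\sum_i r_i\geq t-j\geq s$, and you hope to contradict it by summing the negation of (iii) against the trivial bound $r_i\leq 2^{i+1}$. This fails: writing $k=i+3-\log_2 s$, the negation of (iii) gives $r_i<\gamma sk/100$, which is smaller than $2^{i+1}=s2^{k-1}$ for every $k\geq 1$, so the trivial bound is never binding, and $\sum_i r_i$ is only bounded by $\sum_{k\leq O(\log\log\log N)}\gamma sk/100\asymp \gamma s(\log\log\log N)^2\gg s$. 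No contradiction results. The geometric factor $2^{-(i+1)}$ you invoke to make the sum telescope does not come from the count bound; it arises only if each prime $p\in I_i$ is weighted by $\log p/\log N\in[2^{-(i+1)},2^{-i}]$, i.e., if one measures the \emph{product} of the prime factors rather than their number.

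That is exactly what the paper does, and it requires an ingredient absent from your proposal: a lower bound on the product. One first shows $n':=p_1^{a_1}\cdots p_j^{a_j}\geq N^{\gamma/2}$ — otherwise $p_{j+1}^{a_{j+1}}\geq N^{\gamma/2}$, the failure of (i) forces $a_{j+1}=\cdots=a_k=1$, hence $k-j\leq 2/\gamma$ and $\tilde\tau(n)\leq 2^{2/\gamma}\tilde\tau_\gamma(n)<2^s\tilde\tau_\gamma(n)$, a contradiction; note this is the only place the hypothesis $s>2/\gamma$ is used, and your write-up never uses it, which is a symptom of the gap. One then shows $p_j\leq N^{2/s}$, so that $n'$, up to the negligible smooth part excluded by (ii), is supported on $\bigcup_{i\geq\log_2 s-2}I_i$ and contributes mass $\sum_i r_i2^{-(i+1)}\geq\gamma/2-\gamma/\log\log N>\gamma/4$ in units of $\log N$. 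Against this the negation of (iii) gives $\sum_i r_i2^{-i}<(\gamma s/100)2^{-(\log_2 s-3)}\sum_{k\geq1}k2^{-k}=8\gamma/100<\gamma/4$, which is the contradiction. So your count-based pigeonhole must be replaced by this mass-based one, and the missing lemma $n'\geq N^{\gamma/2}$ (together with $p_j\leq N^{2/s}$) must be supplied to furnish its lower bound.
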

\begin{proof} 
The alternatives (i) and (ii) correspond to the sets $S_1$ and $S_2$
from Section \ref{section: arithmetical lemmas} and thus can be regarded
as \emph{exceptional}. 
Suppose that $n$ is unexceptional, that is (i) and (ii) are not satisfied,
and that the prime factorisation of $n$ is given by
\[ n = p_1^{a_1} \dots p_k^{a_k},\] 
where $p_1 < \dots < p_k$. 
Let $j$ be the index for which
\begin{equation}\label{j-definition} 
p_1^{a_1} \dots p_j^{a_j} \leq N^{\gamma} 
 < p_1^{a_1} \dots p_{j+1}^{a_{j+1}},
\end{equation}and write
$$ n' := p_1^{a_1} \dots p_j^{a_j}~.$$
We claim that $n' \geq N^{\gamma/2}$. Indeed, if this is not the case,
then $p_{j+1}^{a_{j+1}} \geq N^{\gamma/2}$. 
Since (i) does not hold we have $a_{j+1} = 1$. 
Thus, since $p_{j+1} \dots p_k | n$, we have $k - j \leq 2/\gamma$. 
Furthermore, using the fact that (i) does not hold once more,
we have $a_{j+1} = \dots = a_k = 1$ and so in this case
$$ {\tilde \tau} (n)
= 2^{k-j} {\tilde \tau} (n') \leq 2^{2/\gamma}{\tilde \tau} (n') 
\leq 2^{2/\gamma} {\tilde \tau} _{\gamma}(n) 
< 2^s {\tilde \tau}_{\gamma}(n)~, $$
contrary to assumption. 
Let $r \geq 1$ be the unique integer such that 
$$ N^{\gamma/(r+1)} <  p_j \leq N^{\gamma/r}~.$$ 
Then
$$ a_{j+1} + \dots + a_k \leq \frac{\log N}{ \log p_j} \leq
   \frac{r+1}{\gamma}~, $$ 
which means that 
$$ {\tilde \tau} (n) 
= (a_{j+1} + 1) \dots (a_k + 1) {\tilde \tau} (n') 
\leq 2^{a_{j+1} + \dots + a_k}{\tilde \tau} (n') 
\leq 2^{(r+1)/\gamma}{\tilde \tau}_{\gamma}(n) 
\leq 2^{2r/\gamma}{\tilde \tau} _{\gamma}(n)$$
and thus, recalling
the assumption $ 2^s{\tilde \tau} _{\gamma}(n) \leq {\tilde \tau} (n)$,
we have
$$ r \geq s\gamma/2.$$
All prime factors of $n'$ are therefore bounded by $N^{2/s}$.

Since we are not in the exceptional case (ii), the small prime factors
have a negligible contribution
\begin{equation}\label{a-bound} 
\prod_{p \leq N^{1/(\log \log N)^3}} 
p^a \leq N^{\gamma / \log \log N}~.
\end{equation}

Consider the smallest collection of superdyadic intervals
$I_i = [N^{1/2^{i+1}}, N^{1/2^i}]$ which cover 
$(N^{1/(\log \log N)^3},N^{2/s}]$; hence, these $i$ satisfy 
$\log_2 s - 2 \leq i < 6 \log \log \log N$.
In view of \eqref{a-bound}, the bound $p_j \leq N^{2/s}$ and the
fact that $n' \geq N^{\gamma/2}$, we obtain
$$ \prod_i  \prod_{\substack{p \in I_i\\ p^a \Vert n}} p^a 
    \geq N^{\gamma/2 - \gamma/\log \log N} > N^{\gamma/4}~.$$
Since $n$ is unexceptional (and, specifically, (i) does not hold), all of 
the $a$'s appearing here are equal to one.
Thus if the lemma were false, we would have
$$ N^{\gamma/4} \leq
 \prod_{i \geq \log_2 s -2} N^{\gamma s(i + 3 - \log_2 s)/(100 \cdot 2^i)}
 = \exp \left(\log N  \sum_{j \geq 1} \gamma \frac{j}{2^j}
\frac{s}{2^{\log_2 s -2}}\frac{1}{100}\right)
< N^{\gamma/4} ,$$ 
a contradiction\footnote{The somewhat arbitrary factor of
$100$ could have been replaced by any other positive number that was large
enough to induce this contradiction.}.
\end{proof}

It is possible to bound the number of $n \leq N$ satisfying condition
(iii) for some value of $i$ just using their specific structure. 
Setting $m_0 := \lceil \gamma s(i + 3 - \log_2 s)/100 \rceil$, 
write $X(i,s)$ for the set of $n \leq N$ divisible by at least $m_0(i,s)$
primes in $[N^{1/2^{i+1}},N^{1/2^i}]$. 
Thus
\[ N^{-1} |X(i,s)| 
\leq \frac{1}{m_0!} \left( \sum_{p \in I_i} \frac{1}{p} \right)^{m_0} =
\frac{1}{{m_0}!} (\log 2 + o(1))^{m_0}~.\] 
The crude bound $m! \geq \left(\frac{m}{e}\right)^{m}$ yields
the estimate
\begin{equation}\label{xim-bound} 
  N^{-1} |X(i,s)| 
 \leq \left\{
\begin{array}{ll} 
(c/\gamma s)^{\gamma s} & \mbox{if $s/4 \leq 2^i \leq s^2$} \\ 
(c/\gamma s)^{\gamma s i} & \mbox{if $2^i > s^2$}~,
\end{array} \right.
\end{equation}
and hence
\begin{equation}\label{xim-bound-2} 
N^{-1} \sum_{i \geq \log_2 s - 2} 
|X(i,s)| \leq (c/\gamma s)^{\gamma s} \log_2 s~.
\end{equation}
In particular, given the paucity of integers $n$ satisfying (i) and 
(ii) as guaranteed by Lemma \ref{paucity},
this together with Lemma \ref{erdos-lemma} shows that the density of $n
\leq N$
for which  ${\tilde \tau} (n) > 2^s {\tilde \tau}_{\gamma}(n)$ is
bounded by $2^{-c_{\gamma} s\log s}$. 
The fast decay of these densities makes the following definition
reasonable.

\begin{proposition}[Majorant for the divisor function] \label{majorant}
Fix $\gamma > 0$. Write $U(i,s)$ for the set of all products of 
$m_0(i,s):= \lceil \gamma s(i + 3 - \log_2 s)/100 \rceil$ distinct primes
from the interval $[N^{1/2^{i+1}}, N^{1/2^i}]$. 
Define $\nu : [N] \rightarrow \R_+$ by 
\[ 
 C\nu(n) :=
 2^{2/\gamma} {\tilde \tau} _{\gamma}(n) + 
 \sum_{s > 2/\gamma}^{(\log \log N)^3}
 \sum_{i = \log_2 s - 2}^{6 \log \log \log N}
 \sum_{u \in U(i,s)}
 2^s 1_{u|n} {\tilde \tau}_{\gamma}(n) +
 1_{n \in S_1 \cup S_2} {\tilde \tau}(n)~,
\] 
where $S_1 \cup S_2$ is the set of all $n \leq N$
satisfying either (i) or (ii) of Lemma \ref{erdos-lemma}.
Then there is a value of $C$ (depending on $\gamma$) such that 
$\E_{n \leq N} \nu (n) = 1 + o(1)$.
For all $n \leq N$ we have ${\tilde \tau}(n) \leq C\nu(n)$.
\end{proposition}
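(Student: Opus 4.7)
The proposition has two parts: the pointwise majorisation $\tilde\tau(n) \leq C\nu(n)$ for every $n \leq N$, and the mass normalisation $\E_{n \leq N}\nu(n) = 1+o(1)$. I would verify each separately.

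For the pointwise bound, fix $n$ and split on whether $n \in S_1 \cup S_2$. In the exceptional case, the third summand of $C\nu(n)$ is simply $\tilde\tau(n)$, so the bound is immediate. Otherwise, let $s_0$ be the largest integer with $\tilde\tau(n) \geq 2^{s_0}\tilde\tau_\gamma(n)$, whence $\tilde\tau(n) < 2^{s_0+1}\tilde\tau_\gamma(n)$. If $s_0 \leq 2/\gamma$, then $\tilde\tau(n)$ is controlled by an absolute multiple of $2^{2/\gamma}\tilde\tau_\gamma(n)$, hence by the first summand of $C\nu(n)$ up to a factor absorbed in $C$. If $s_0 > 2/\gamma$, apply Lemma \ref{erdos-lemma} with $s = s_0$: alternatives (i) and (ii) fail by hypothesis, so (iii) must hold, yielding an admissible $i$ and $m_0(i,s_0)$ distinct primes in $I_i$ all dividing $n$. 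Their product $u$ lies in $U(i,s_0)$, and the corresponding term $2^{s_0}1_{u|n}\tilde\tau_\gamma(n) \geq \tilde\tau(n)/2$ appears in the double sum, since the proof of Lemma \ref{erdos-lemma} pins $i \leq 3\log_2\log\log N + O(1)$, hence $s_0 \ll (\log\log N)^3$.

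For the mass bound I would compute $\E[C\nu]$ term by term. The first summand yields $2^{2/\gamma}\E\tilde\tau_\gamma(n) = 2^{2/\gamma}(1+o(1))$ via $\sum_{d \leq N^\gamma}1/d = \gamma\log N + O(1)$. The third summand $\E\bigl[1_{S_1 \cup S_2}(n)\tilde\tau(n)\bigr]$ is $o(1)$ by Cauchy--Schwarz combined with Lemmas \ref{(i)-bound}, \ref{(ii)-bound} (applied to the trivial one-dimensional system) and Lemma \ref{divisor-bound}; equivalently it is the $t=1$ case of Lemma \ref{paucity}. The middle summand is the crux: for fixed $u \in U(i,s)$, squarefree with $m_0(i,s)$ prime factors from $I_i$, one checks $u \leq N^{m_0/2^i} \leq N^{\gamma/5+o(1)}$ in the admissible range, and a short lcm calculation gives $\E[1_{u|n}\tilde\tau_\gamma(n)] \lesssim \tau(u)/u = 2^{m_0}/u$. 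Summing over $U(i,s)$ with Mertens, $\sum_{u \in U(i,s)}1/u \leq (\log 2 + o(1))^{m_0}/m_0!$, so the contribution from the $(s,i)$-slab is bounded by $2^s(2\log 2)^{m_0}/m_0! \lesssim 2^s(C'/m_0)^{m_0}$ by Stirling. Since $i + 3 - \log_2 s \geq 1$ forces $m_0 \geq \gamma s/100$ throughout the admissible range, this decays super-exponentially in $s$, and summing over $s > 2/\gamma$ and the $O(\log\log\log N)$ values of $i$ yields $o(1)$.

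Adding the three contributions gives $\E[C\nu] = 2^{2/\gamma} + o(1)$, so the normalising choice of $C$ asymptotic to $2^{2/\gamma}$ produces $\E\nu = 1+o(1)$. The main difficulty lies in the middle-sum estimate: the weight $2^s$ is large, and the only source of cancellation is the combinatorial decay $(C'/m_0)^{m_0}$ coming from the Stirling bound on $\sum_{u\in U(i,s)}\tau(u)/u$. This cancellation prevails only because Erd\H{o}s's pigeonhole argument in Lemma \ref{erdos-lemma} forces $m_0$ to grow linearly in $s$ throughout the admissible range; that is the reason for the precise threshold $m_0(i,s) = \lceil\gamma s(i+3-\log_2 s)/100\rceil$ built into the definition of $U(i,s)$.
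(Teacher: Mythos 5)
Your strategy coincides with the paper's: the pointwise bound is extracted from Lemma \ref{erdos-lemma} (you spell out the case analysis on the maximal $s_0$ with $\tilde\tau(n)\geq 2^{s_0}\tilde\tau_\gamma(n)$, which the paper leaves implicit), and the expectation is controlled by combining Mertens' theorem, the bound $\sum_{u\in U(i,s)}1/u\leq (\log 2+o(1))^{m_0}/m_0!$, Stirling, and the observation that $m_0(i,s)\geq \gamma s/100$ grows linearly in $s$ throughout the admissible range --- exactly the computation given in the paper. The factor-of-two slippages (e.g. $\tilde\tau(n)<2^{s_0+1}\tilde\tau_\gamma(n)$ rather than $\leq 2^{s_0}\tilde\tau_\gamma(n)$, and the extra $\tau(u)=2^{m_0}$ arising from non-coprime pairs in the lcm computation) are harmless and are treated with the same looseness in the paper, since only majorisation up to an absolute constant is needed.

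One quantitative claim in your final assembly is wrong, however: the middle sum does \emph{not} contribute $o(1)$ to $\E_{n\leq N}[C\nu(n)]$, and consequently $C$ is not asymptotic to $2^{2/\gamma}$. Already the lowest slab contributes a constant: for the smallest admissible $s$ (just above $2/\gamma$) and the smallest admissible $i$ one has $m_0(i,s)=1$, so $U(i,s)$ is simply the set of primes in $I_i$, and that slab contributes $2^{s}\sum_{p\in I_i}p^{-1}\,\E\tilde\tau_\gamma\,(1+o(1)) = 2^{s}(\log 2+o(1))\asymp 2^{2/\gamma}$, the same order as the first summand. The super-exponential decay you invoke kicks in only for large $s$ and guarantees convergence of the sum, not smallness; what the estimate actually yields is that the middle sum is $O_\gamma(1)$, hence that $\E_{n\leq N}[C\nu(n)]$ is bounded above and below by constants depending only on $\gamma$. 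That is all the proposition requires (and all the paper itself establishes), so the conclusion survives, but the asserted value $C=2^{2/\gamma}(1+o(1))$ should be replaced by $C=2^{2/\gamma}+c_\gamma$ with $c_\gamma>0$ a non-negligible constant coming from the cluster terms.
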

\begin{remarks}
\emph{(1)}\quad
Since $\gamma$ will be as small as necessary in every later application,
we may as well choose it to be the reciprocal of an integer. 
This has the advantage that, setting $U(i, 2/\gamma):=\{1\}$ for 
$i = \log_2 s - 2$ and $U(i, 2/\gamma):=\emptyset$ otherwise, we can write
$$
 C\nu(n) =  
 \sum_{s = 2/\gamma}^{(\log \log N)^3}
 \sum_{i = \log_2 s - 2}^{6 \log \log \log N}
 \sum_{u \in U(i,s)}
 2^s 1_{u|n} {\tilde \tau}_{\gamma}(n) +
 1_{n \in S_1 \cup S_2} {\tilde \tau}(n)~.$$
\emph{(2)}\quad
While $\nu$ can be shown to be pseudorandom, a further reduction in the
next section will allow us to save some work by dropping the exceptional
term $1_{n \in S_1 \cup S_2} {\tilde \tau}(n)$.

\noindent
\emph{(3)}\quad
Finally, note that the divisors $u \in U(i,s)$ are truncated divisors
themselves, that is, they satisfy $u \leq N^{\gamma}$. 
Indeed, suppose $i + 3 - \log_2 s = j (> 1)$, and hence $s/2^i = 8/ 2^j$,
then 
$$u 
\leq N^{m_0(i,s)/2^{i}} 
\leq N^{2 \gamma s j /(100 \cdot 2^{i})}
\leq N^{2 \gamma 8 j /(100 \cdot 2^{j})} 
< N^{\gamma}~.$$
\end{remarks}

\begin{proof}
The fact that ${\tilde \tau} (n) \leq C \nu(n)$ is an immediate
consequence of Lemma \ref{erdos-lemma}. 
To show the existence of $C$, we have to check that the expectation of
$\nu$ on the integers $\leq N$ is bounded independent of $N$. 
Note that
\begin{align*}
 \E_{n\leq N} \sum_{u \in U(i,s)} 1_{u|n}
     {\tilde \tau} _{\gamma}(n) 
 \leq \frac{1}{m_0!} \Bigg( \sum_{p \in I_i} \frac{1}{p} \Bigg)^{m_0}
     \frac{1}{\gamma \log N } \sum_{m \leq N^{\gamma}} \frac{1}{m}
 \leq \frac{1}{m_0!} (\log 2 + o(1))^{m_0} ~.
\end{align*}
This allows us to make use of a bound of type \eqref{xim-bound-2}. In
detail,
\begin{align*}
&\E_{n\leq N} 
 \sum_{s \geq 2/\gamma} 
 \sum_{i \geq \log_2 s - 2}
 \sum_{u \in U(i,s)} 
  1_{u|n} {\tilde \tau}_{\gamma}(n) 2^s \\
&\leq \sum_{s \geq 2/\gamma} \sum_{i \geq \log_2 s - 2} 
  \frac{1}{m_0!} (\log 2 + o(1))^{m_0} ~2^s \\
&\leq \sum_{s \geq 2/\gamma} \sum_{j \geq 1} 
  \left(\frac{100 \cdot e \cdot (\log 2 + o(1))}{\gamma s j}
  \right)^{\gamma sj/100} 2^s \\
&\leq \sum_{s \geq 2/\gamma}  \frac{2^s}{s^{s\gamma/100}} 
  \left( 
  \sum_{j \geq 1} \left(\frac{100 \cdot e \cdot (\log 2 + o(1))}{\gamma j}
  \right)^{\gamma j/100} \right)^s
\end{align*}
which converges. We note for later reference that the above
expression still converges when the factor $2^s$ is replaced by $a^s$
with any positive constant $a$.
\end{proof}

\section{$W$-trick}\label{W-trick-section}

The nilpotent Hardy-Littlewood method employs the uniformity of a
function to deduce an asymptotic for finite complexity correlations.
However, the divisor function ${\tilde \tau}$ is not equidistributed in
residue classes to small moduli and thus in particular not
Gowers-uniform. 
To remove this obstruction, we shall use a so-called $W$-trick and
decompose ${\tilde \tau}$ into a sum of functions which do not detect a
difference between these residue classes.
This decomposition of ${\tilde \tau}$ can be viewed as a factorisation as
product of a uniform function and an almost periodic function.

It is natural to consider the restricted divisor function that does not
count divisors with small prime factors at all:
\begin{definition}[$W$-tricked divisor function]
Set $w(N) := \frac{1}{2} \log \log N$ and $W:=\prod_{p<w(N)}p$.
We define $W$-tricked versions of ${\tilde \tau}$ and 
${\tilde \tau}_{\gamma}$ by
$${\tilde \tau}'(n) 
 := \frac{W}{\phi(W)} (\log N)^{-1} \sum_{(d,W)=1} 1_{d|n}~,$$
and
$${\tilde \tau}'_{\gamma}(n) 
 := \frac{W}{\phi(W)} (\gamma \log N)^{-1}
    \sum_{\substack{d \leq N^{\gamma}\\(d,W)=1}} 1_{d|n}~,$$
where $\phi$ denotes Euler's totient function.
\end{definition}
Thus, $\tilde \tau$ decomposes as a product
$$\tilde \tau(n) 
 = \tilde\tau'(n) \Bigg( \frac{\phi(W)}{W}
   \sum_{\substack{w \in \N~: \\ p|w~ \Rightarrow~ p<w(N)}}
   1_{w|n} \Bigg)~,$$
where the first factor is expected to be uniform and the second factor is
almost periodic.
We may, in fact, replace the second factor by a periodic function: 
Setting 
$$\W 
:= \prod_{p \leq w(N)} 
  p^{\left\lfloor C_1 \log_p(\log N) \right \rfloor}
\leq (\log N)^{C_1 \pi(w(N))}
\ll \exp\left(\frac{C_1(\log\log N)^2}{\log\log\log N}\right)~,
$$
define the following explicit function $\bar \tau : \Z \to \R$ by 
$$\bar \tau(n) 
 := \tilde\tau'(n)
 \Big( \frac{\phi(W)}{W} 1_{\varpi(n)|(\W/W)} \sum_{w | \W} 1_{w|n}
\Big)~,$$
where we denote by
$$\varpi(n):= \prod_{p^a\| n, p\leq w(n)}p^a$$
the $w(N)$-smooth factor of $n$.
Any integer $n$ that does not satisfy $\varpi(n)|(\W/W)$ is divisible by
some
proper prime power $p^a > \log^{C_1}N$, and hence belongs to the
exceptional set $S_1$ (c.f.~Lemma \ref{(ii)-bound}). 
Thus, $\bar \tau$ satisfies the conditions of Proposition
\ref{tau-bar-reduction}.
This will allow us to deduce the Main Theorem from the following
proposition, to be established in Section \ref{application_section}.

\begin{proposition}\label{w-reduction2}
Let $M=N/\W$, let $\tilde\Psi$ be a finite complexity system of
affine-linear forms whose linear coefficients are bounded by $L$.
Then for any choice of $b_1,\dots, b_t \in [\W]$ such that 
$\varpi(b_i)| (\W/W)$ for all $i \in [t]$,
\begin{equation*}
\E_{n \in \Z^d \cap K'}
\prod_{i \in [t]} {\tilde \tau}'(\W \tilde\psi_i(n) + b_i) 
= 1 + o_{d,t,L}(M^d/\vol(K'))
\end{equation*}
holds for every convex body $K' \subseteq [-M,M]^d$ which satisfies 
$\W \tilde\Psi(K')+(b_1,\dots,b_t)\subseteq[1,N]^d$.
\end{proposition}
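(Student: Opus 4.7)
The plan is to prove Proposition~\ref{w-reduction2} by a direct sieve-theoretic computation. First, expand each $\tilde\tau'$ via its definition and swap with the expectation over $K'$:
$$\E_{n\in K'\cap\Z^d}\prod_{i=1}^t \tilde\tau'(\W\tilde\psi_i(n)+b_i)=\Bigl(\frac{W}{\phi(W)}\Bigr)^t\frac{1}{\log^t N}\sum_{\substack{d_1,\dots,d_t\\(d_i,W)=1}}\E_{n\in K'}\prod_{i=1}^t 1_{d_i\mid\W\tilde\psi_i(n)+b_i}.$$
The divisors $d_i$ are effectively restricted to $[1,N]$ since $\W\tilde\psi_i(n)+b_i\in[1,N]$.

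Next, apply Lemma~\ref{volume-packing} to each inner expectation, obtaining
$$\alpha_\W(d_1,\dots,d_t)+O\Bigl(\frac{M^{d-1}\lcm(d_1,\dots,d_t)}{\vol(K')}\Bigr),$$
where $\alpha_\W$ denotes the local density of the shifted system $(\W\tilde\psi_i+b_i)_{i=1}^t$. The key arithmetic input is that the condition $(d_i,W)=1$ forces every prime $p$ dividing some $d_i$ to satisfy $p\geq w(N)$; for $N$ sufficiently large in $t,d,L$, any such prime is unexceptional for $\tilde\Psi$ and coprime to $\W$, so neither the shifts $b_i$ nor the multiplier $\W$ affect local densities, and $\alpha_\W$ obeys the estimates \eqref{alpha-bound-0} and \eqref{alpha-bound}.

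For the main term, use multiplicativity of $\alpha_\W$ (Chinese remainder theorem). The dominant contribution comes from pairwise coprime tuples and yields
$$\Bigl(\frac{W}{\phi(W)}\Bigr)^t\frac{1}{\log^t N}\prod_{i=1}^t\sum_{\substack{d_i\leq N\\(d_i,W)=1}}\frac{1}{d_i}\sim\Bigl(\frac{W}{\phi(W)}\Bigr)^t\frac{1}{\log^t N}\Bigl(\frac{\phi(W)}{W}\log N\Bigr)^t=1$$
by Mertens' theorem restricted to integers coprime to $W$. Tuples in which some pair shares a common prime contribute a multiplicative factor $\prod_{p\geq w(N)}\beta_p=1+o(1)$ by Lemma~\ref{dependent_div_events} together with \eqref{beta_p-bound}, so the main term evaluates to $1+o(1)$.

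The principal obstacle is controlling the volume packing error when summed over the full range $d_i\leq N$: naively, $\sum_{d_i\leq N}\lcm(d_i)/\vol(K')$ vastly exceeds $1$. To manage this, I would truncate the divisor sums at $d_i\leq D$ for an appropriate parameter $D$ (for instance via an iterated Dirichlet hyperbola identity $\tau(n)=2\sum_{d\mid n,\,d\leq\sqrt n}1+O(1)$, allowing us to take $D=N^{\eta}$ for small $\eta>0$ while retaining an essentially correct main term), so that the post-truncation volume packing error is $O(D^{O(1)}/M)=o(1)$. The tail from $d_i>D$ is then controlled via Cauchy--Schwarz combined with the moment bound of Lemma~\ref{divisor-bound} applied to the pointwise inequality $\tilde\tau'\leq\tilde\tau$.
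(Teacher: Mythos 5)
Your direct sieve approach has a fundamental gap in the error/tail control, and it is precisely the gap that forces the paper to use the nilpotent Hardy--Littlewood method rather than an elementary computation. The volume-packing error $O(M^{d-1}\lcm(d_1,\dots,d_t)/\vol(K'))$, summed over all tuples with $d_i \leq N$, is of size roughly $N^{t}M^{d-1}/\vol(K')$, which dwarfs the main term; even the hyperbola identity only permits truncation at $\sqrt{N}$, and already $\sum_{d\leq\sqrt N} d\cdot M^{d-1}/\vol(K') \gg N/M = \W \to \infty$. To make the packing error $o(1)$ you would need $D = N^{\eta}$ with $\eta$ very small, but then the truncation destroys the main term: there is no ``iterated hyperbola identity'' that represents $\tau(n)$ by divisors up to $N^{\eta}$ for $\eta < 1/2$. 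Indeed the paper cites Tenenbaum's theorem that for $\gamma < 1/2$ the inequality $\tilde\tau(n) \leq \lambda\tilde\tau_{\gamma}(n)$ fails on a positive proportion of $n\leq N$ for \emph{every} $\lambda$; moreover the tail carries a constant fraction of the mass, since $\E_{n\leq N}(\tilde\tau(n)-\tilde\tau_{\eta}(n)) = \frac{1}{\log N}\sum_{N^{\eta}<d\leq N}\E_n 1_{d|n} \sim 1-\eta$. Your proposed remedy --- Cauchy--Schwarz against the moment bound of Lemma \ref{divisor-bound} --- only bounds the tail's contribution to the correlation by $(\log N)^{O(1)}$, since $\E_n\tilde\tau(n)^{t}\asymp(\log N)^{2^{t}-1-t}$ is large for $t\geq2$; it does not give $o(1)$. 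So neither the truncated main term nor the discarded tail is under control.

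The paper's actual proof of Proposition \ref{w-reduction2} takes an entirely different route: it writes $\tilde\tau'_{\W,b_i}\circ\psi_i = (\tilde\tau'_{\W,b_i}\circ\psi_i - 1)+1$, expands the product so that the constant term yields the main term $1$, and kills every other term with the generalised von Neumann theorem (Proposition \ref{ v.neumann}). That requires (a) the pseudorandom majorant $\nu'_{\W,b}$ built from Erd\H{o}s's structure theorem --- whose divisor sums \emph{are} genuinely truncated at $N^{\gamma}$ by construction, which is why the linear forms and correlation conditions for $\nu'$ can be verified by exactly the volume-packing computation you attempt here --- and (b) the bound $\|\tilde\tau'_{\W,b_i}-1\|_{U^{t-1}}=o(1)$, obtained via the inverse theorem for the Gowers norms and the non-correlation of $\tilde\tau'_{\W,b}$ with nilsequences in Section \ref{non-correlation-section}. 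Your computation is essentially the right one for the majorant's linear forms condition (Proposition \ref{linear-forms-prop}), but it cannot be applied to $\tilde\tau'$ itself.
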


\begin{proof}[Proof of the Main Theorem from Proposition
\ref{w-reduction2}]
Assume $K$ and $\Psi$ satisfy the conditions of the Main Theorem.
Fix some $a \in [\W]^d$ and let $\tilde \psi_{i,a} : \Z^d \to \Z$ be the
affine-linear function for which
$$\psi_i(\W n+a) = \W \tilde \psi_{i,a}(n) + b_i(a)~,$$
where $b_i(a) \in [\W]$. 
Note that $\psi_i$ and $\tilde \psi_{i,a}$ only differ in
the constant term. 

Define
$K'_a \subset \R^d$ to be the convex body 
$\{x \in \R^d: \W x+a \in K\}$ and note that 
$\vol(K'_a) = \vol(K)/\W^d$.
By Proposition \ref{w-reduction2}, we then have
\begin{align*}
&\sum_{\substack{n \in \\ \Z^d \cap K}} 
 \prod_{i \in [t]} {\bar \tau} (\psi_i(n))\\
&= \sum_{a \in [\W]^d}
 \sum_{\substack{\W n + a  \\ \in \Z^d \cap K }}
 \prod_{i \in [t]} 
 \Bigg( \frac{\phi(W)}{W} \sum_{w|\W} 1_{w|\psi_i(a)}
 \Bigg)
 1_{\varpi(\psi_i(a))|(\W/W)}
 {\tilde \tau}'(\W \tilde \psi_{i,a}(n) + b_i(a)) \\
&=(1+ o_{t,d,L}(1)) 
 \frac{\vol K}{\W^d}
 \sum_{a \in [\W]^d} 
 \prod_{i \in [t]} 1_{\varpi(\psi_i(a))|(\W/W)}
 \Bigg( \frac{\phi(W)}{W} \sum_{w|\W} 1_{w|\psi_i(a)} 
  \Bigg)  \\
&=(1+ o_{t,d,L}(1)) 
 \vol K \frac{\phi(W)^t}{W^t}
 \E_{a \in [\W]^d}
 \prod_{i \in [t]} 1_{\varpi(\psi_i(a))|(\W/W)}
 \Bigg(  \sum_{w|\W} 1_{w|\psi_i(a)} 
  \Bigg)~.
\end{align*}
The latter expectation can be expressed in terms of local divisor
densities: 

Define $\alpha(p)$ for $p < w(N)$ such that
$\W = \prod_{p < w(N)}p^{\alpha(p)}$. 
Then
\begin{align*}
&\E_{a \in [\W]^d}
 \prod_{i \in [t]} 
 \sum_{w|\W} 1_{w|\psi_i(a)} 
  1_{\varpi(\psi_i(a))|(\W/W)} \\
&= \prod_{p < w(N)}
  \sum_{e_1, \dots, e_t < \alpha(p)} 
  \E_{n \in (\Z/p^{\alpha(p)}\Z)^{d}} \prod_{i \in[t]} 
  \Big(1_{\psi_i(n)\equiv 0 \Mod{p^{e_i}}} -
  1_{\psi_i(n)\equiv 0 \Mod{p^{\alpha(p)}}}\Big) \\
&= \prod_{p < w(N)} \Bigg(
  \sum_{e_1, \dots, e_t \in \N} 
  \E_{n \in (\Z/p^{\max_j e_j}\Z)^{d}} \prod_{i \in[t]} 
  1_{\psi_i(n)\equiv 0 \Mod{p^{e_i}}}\\
& \quad \quad \quad + O\bigg(
     \sum_{\substack{e_1, \dots, e_t \\ < \alpha(p)}}p^{-\alpha(p)}
     + \sum_{\substack{e_1, \dots, e_t \\ \max_j e_j \geq \alpha(p) }}
        p^{-\max_j a_j} \bigg) 
  \Bigg)~.
\end{align*}
Since $\alpha(p) = C_1 \frac{\log \log N}{\log p} + O(1)$, we may bound
the error term by
\begin{align*}
\sum_{\substack{e_1, \dots, e_t \\ < \alpha(p)}} 
  p^{-\alpha(p)}
+ \sum_{\substack{e_1, \dots, e_t \\ \max_j e_j \geq \alpha(p) }}
  p^{-\max_j a_j}
\ll \frac{(\log \log N)^t}{(\log N)^{C_1}} 
  + \sum_{j\geq C_1 \frac{\log\log N}{\log p}} p^{-j} j^t 
 \ll (\log N)^{-C_1 + 1}~.
\end{align*}
Hence
\begin{align*}
 \E_{a \in [\W]^d}
 \prod_{i \in [t]} 
 \sum_{w|\W} 1_{w|\psi_i(a)} 
  1_{\varpi(\psi_i(a))|\W}
&= \prod_{p < w(N)} 
  (\sum_{e_1, \dots, e_t \in \N} 
   \alpha(p^{e_1},\dots, p^{e_t}) + (\log N)^{-C_1 +1})\\
&= \prod_{p < w(N)} 
  (\beta_p(1-p^{-1})^t + (\log N)^{-C_1 +1}) \\
&= (1+o(1))\prod_{p < w(N)} 
   \beta_p(1-p^{-1})^t~,
\end{align*}
where the last step follows, keeping in mind that $1 \leq \beta_p \ll 1$,
from
$$\sum_{p\leq w(N)}\log (1 + O((\log N)^{-C_1 +1})) 
\ll w(N)(\log N)^{-C_1 +1} \ll \frac{\log \log N}{(\log N)^{C_1 -1}}~.$$
Since
$ \prod_{p \leq w(N)}(1-p^{-1})^{-1}=\frac{W}{\phi(W)}~, $
the above implies
$$ 
 \E_{n \in \Z^d \cap K} 
  \prod_{i \in [t]} {\bar \tau} (\psi_i(n)) 
 = (1+o(1))\prod_{p \leq w(N)} \beta_p ~.$$
The local factors bound \eqref{beta_p-bound}, that is
$\beta_p= 1+O_{t,d,L}(p^{-2})~,$
and Proposition \ref{tau-bar-reduction} yield the Main Theorem:
$$ \E_{n \in \Z^d \cap K} \prod_{i \in [t]} {\tilde \tau} (\psi_i(n))
 = \prod_{p} \beta_p + o(1)~.$$
\end{proof}

\stsubsection{$W$-tricked majorant}
In order to prove Proposition \ref{w-reduction2}, we require for any given
choice of $b=(b_1, \dots, b_t) \in [\W]^t$ a majorant that simultaneously
majorises all of the functions
$n \mapsto {\tilde \tau}'(\W n + b_i)$ for $i=1,\dots,t$.
Define 
\begin{align} \label{nu-dashed}
C'\nu'(n) 
:= 
 \sum_{s = 2/\gamma}^{(\log \log N)^3}
 \sum_{i = \log_2 s - 2}^{6 \log \log \log N}
 \sum_{u \in U(i,s)} 
 2^s 1_{u|n}{\tilde \tau}'_{\gamma}(n)~,
\end{align}
where $C'$ is such that $\E_{n \leq N} \nu'(n) = 1+o(1)$.
This and the definition of $\tilde \tau'$ imply 
$\E_{n \leq M} \nu'(\W n + a) = 1+o(1)$ for all $a \in [\W]$.

Thus, a majorant of the required form is given by a constant multiple of
$$\nu'_{\W,b}:\Z^t \to \Z, 
\qquad 
\nu'_{\W,b} := \E_{i \in [t]} \nu'(\W n+b_i)~.$$ 
Note furthermore that $\nu'_{\W,b}$ still satisfies the condition
$\E_{m \leq M} \nu'_{\W,b}(m) = 1 + o(1)$.

\section{The linear forms condition}
\label{linear forms section}

The aim of the following two sections is to show that the following slight
modification of the majorant 
$\nu'_{\W,(b_i)} = \E_{i \in [t]} \nu'(\W n+b_i)$ is indeed pseudorandom. 
Let $M'$ be a prime satisfying $M < M' \leq O_{t,d,L}(M)$ and define
$\nu^{*}_{\W,(b_i)}:[M'] \to \R^+$ by
\begin{align*}
 \nu^*_{\W,(b_i)}(n)= 
 \left\{
 \begin{array}{ll}
 \frac{1}{2}(1+\nu'_{\W,(b_i)}(n)) &\text{ if } n \leq M \cr
 1 & \text{ if } M < n \leq M'~.
 \end{array}
 \right.
\end{align*}
As is seen in \cite[App.D]{green-tao-linearprimes},
$\nu^{*}_{w,(b_i)}$ is $D$-pseudorandom if it satisfies the following two
propositions, which are technical reductions of the linear forms and
correlation conditions from \cite{green-tao-linearprimes}. 

\begin{proposition}[$D$-Linear forms estimate]
\label{linear forms estimate}
Let $1 \leq d,t \leq D$ and 
let $(i_1, \dots, i_t) \in [t]^t$ be an arbitrary collection of indices.
For any finite complexity system
$\Psi : \Z^d \to \Z^t$ whose linear coefficients are bounded by $D$
and for every convex body $K \subseteq [-M,M]^d$ which satisfies 
$$\W \Psi(K) +(b_{i_1},\dots,b_{i_t}) \subseteq [1,N]^t~,$$ the asymptotic
$$\E_{n\in \Z^d \cap K } 
  \prod_{j \in [t]} 
  \nu'(\W \psi_j(n) + b_{i_j}) 
= 1 + O_D(N^{d - 1 +O_D(\gamma)}/\vol(K)) + o_D(1)$$
holds, provided $\gamma$ was small enough.
\end{proposition}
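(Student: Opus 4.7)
The plan is to follow the Goldston--Y{\i}ld{\i}r{\i}m--Green--Tao paradigm: expand $\nu'$ into a finite linear combination of indicators of divisibility events, apply the volume packing argument (Lemma~\ref{volume-packing}) term by term, and control both the main term and the error term via the truncation $u,d \le N^{\gamma}$ together with the local factor estimates from Section~2.

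First, I would fully expand the product. Since each $u \in U(i,s)$ satisfies $u \le N^\gamma$ (by Remark (3) after Proposition~\ref{majorant}) and the truncation in ${\tilde \tau}'_{\gamma}$ restricts to $d \le N^\gamma$ with $(d,W)=1$, one has
\[
\prod_{j=1}^t \nu'(\W\psi_j(n)+b_{i_j}) = \Bigl(\tfrac{W}{C'\phi(W)\gamma \log N}\Bigr)^t \sum_{(s_j,k_j,u_j,d_j)_j} \prod_j 2^{s_j} 1_{(d_j,W)=1}\, 1_{e_j \mid \W\psi_j(n)+b_{i_j}},
\]
where $e_j := \lcm(u_j,d_j) \le N^{2\gamma}$ and $e_j$ is coprime to $\W$. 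Swapping the sum with $\E_{n \in \Z^d \cap K}$, Lemma~\ref{volume-packing} replaces the inner average by $\alpha_{\W\Psi+b}(e_1,\dots,e_t)$ plus $O(M^{d-1}\lcm(e_1,\dots,e_t)/\vol(K))$.

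For the error term, I would bound $\lcm(e_1,\dots,e_t) \le N^{2t\gamma}$ uniformly, so the total error is controlled by $M^{d-1} N^{2t\gamma}/\vol(K)$ times the $\ell^1$-norm of the weights. The latter is exactly $\E_{m \le M}\nu'(\W m + b_{i_j})^t$-style quantity and is polylogarithmically bounded, since $\E_{n \le N}\nu'(n) = 1 + o(1)$ by construction and the weights $2^{s_j}$ are absorbed in this normalisation (the proof of Proposition~\ref{majorant} shows that this convergence survives even after $2^{s_j}$ is replaced by $a^{s_j}$ for any constant $a$). This yields the claimed $O_D(N^{d-1+O_D(\gamma)}/\vol(K))$.

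For the main term, I would use the multiplicativity of $\alpha$ to factor it over primes. Because every $e_j$ is coprime to $\W$, all primes $p < w(N)$ contribute a trivial factor of $1$: this is precisely the role of the $W$-trick. At each prime $p \ge w(N)$, the contribution splits into an \emph{independent} part, where at most one $e_j$ is divisible by $p$, and a \emph{dependent} part, where $p$ divides at least two $e_j$'s. By Lemma~\ref{dependent_div_events} (and the bound~\eqref{alpha-bound}), the dependent part is $O_{D}(p^{-2})$ per prime, so the product over $p \ge w(N)$ of such contributions is $1 + o_D(1)$. The independent part factorises across $j \in [t]$, and each one-dimensional local sum reproduces the computation that gives $\E_{m \le M}\nu'(\W m + b_{i_j}) = 1 + o(1)$ (which in turn justifies the chosen $C'$). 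Multiplying over all $j$ and all $p$ produces $1 + o_D(1)$.

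The main obstacle is bookkeeping: one has to verify that the weights $2^{s_j}$ combined with the combinatorial factors $|U(i,s)|$ give total mass $1 + o(1)$ after normalisation at each prime, and that the crude error-term estimate does not lose a factor of $2^{s_j}$ larger than $N^{O_D(\gamma)}$. The decisive point is that the sum in Proposition~\ref{majorant} was shown to converge even with $2^{s_j}$ replaced by $a^{s_j}$, so raising these weights to bounded powers $(a=2^t)$ does not damage the estimate, and the $N^{\gamma}$ truncation on both $u$ and $d$ keeps the $\lcm$'s small enough for the error term to be a power saving.
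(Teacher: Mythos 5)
Your proposal follows essentially the same route as the paper: expand $\nu'$ into truncated divisor sums, apply the volume packing lemma term by term, use the $N^{O_D(\gamma)}$ bound on the number of terms and on the lcm's to control the volume-packing error, discard the ``dependent'' local contributions via Lemma \ref{dependent_div_events}, and identify the factorised main term with the already-known $t=1$ case $\E_{n \leq N}\nu'(n)=1+o(1)$ rather than evaluating it explicitly. (The paper packages this as Proposition \ref{linear-forms-prop}, stated for systems all of whose exceptional primes are $\leq w(N)$, and then checks that the composed forms $\W\psi_j+b_{i_j}$ inherit this property from $\Psi$; you work directly with the composed forms, which amounts to the same thing, though you should record explicitly that for $p>w(N)$ no two of them are affinely related mod $p$, since that is what licenses \eqref{alpha-bound} and Lemma \ref{dependent_div_events}.) Two of your justifications are imprecise, though neither is fatal. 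First, the quantity multiplying $M^{d-1}N^{2t\gamma}/\vol(K)$ in the error term is the sum of the coefficients $\prod_j 2^{s_j}\bigl(W/(C'\phi(W)\gamma\log N)\bigr)$ over all index tuples \emph{without} the densities $1/e_j$; this is not an ``$\E\nu'^t$-style quantity'' and is not polylogarithmic --- it is $N^{O_D(\gamma)}$, dominated by the $N^{t\gamma}$ choices of $(d_1,\dots,d_t)$. That cruder count still yields $O_D(N^{d-1+O_D(\gamma)}/\vol(K))$, which is all that is claimed, so the conclusion stands but the stated reason does not. Second, the main term does not literally factor as an Euler product over primes, because the weights attached to $e_j$ (membership of $u_j$ in $U(i_j,s_j)$, the truncation $d_j\leq N^{\gamma}$) are not multiplicative in $e_j$. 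The correct implementation of your independent/dependent split is the paper's: compare $\alpha(e_1,\dots,e_t)$ with $\prod_j e_j^{-1}$ by first restricting to pairwise coprime tuples, where Lemma \ref{dependent_div_events} supplies the factor $\prod_{p>w(N)}(1+O_D(p^{-2}))=1+O_D(1/w(N))$, and then completing the sum back to a product over $j$; the re-inserted non-coprime $u$-tuples are negligible because any common prime factor of two $u_j$'s exceeds $N^{1/(\log\log N)^3}$.
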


\begin{proposition}[Correlation estimate]
\label{verification of C-Condition}
For every $1 < m_0 \leq D$ there exists a function $\sigma_{m_0} : \Z_{M'}
\to \R^+$ with bounded moments 
$\E_{n \in \Z_{M'}} \sigma_{m_0}^q(n) \ll_{m,q} 1$
such that 
$$\E_{n \in I} \prod_{j \in [m]} \nu'(\W(n+h_j)+b_{i_j}) 
  \leq \sum_{1 \leq i < j \leq m} \sigma_{m_0}(h_i - h_j)~$$ 
holds for every interval $I \subset \Z_{M'}$, for every $1 \leq m \leq
m_0$ and every $m$-tuple $(i_1, \dots, i_{m}) \in [t]^{m}$, and for every
choice of (not necessarily distinct) $h_1,\dots,h_{m} \in \Z_{M'}$,
provided $\gamma$ was small enough.
\end{proposition}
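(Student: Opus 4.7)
The plan is to expand $\nu'$ fully into indicators of divisibility events, pull them through the expectation over $n \in I$, and use a prime-by-prime local analysis to isolate the dependence on the shifts $h_j$; the needed bound will arise from primes at which two of the expanded divisibility events collide. Substituting \eqref{nu-dashed} and the definition of ${\tilde \tau}'_{\gamma}$, one obtains
\[
 \prod_{j \in [m]}\nu'(\W(n+h_j)+b_{i_j})
 = \frac{1}{(C')^m}\sum_{(s_j, u_j, d_j)_{j}}
   \prod_{j \in [m]}\frac{2^{s_j}W}{\phi(W)\gamma\log N}
   \cdot 1_{\lcm(u_j,d_j)\mid \W(n+h_j)+b_{i_j}},
\]
where $(s_j, u_j)$ ranges over the index set from Proposition \ref{majorant} and $d_j\leq N^\gamma$ with $(d_j, W)=1$. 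Writing $D_j := \lcm(u_j, d_j)$, I note $D_j \leq N^{2\gamma}$ (Remark (3) after Proposition \ref{majorant}) and, since both $u_j$ and $d_j$ consist only of primes exceeding $w(N)$, $(D_j, \W) = 1$. Averaging over $n \in I$ by a one-dimensional volume-packing argument in the spirit of Lemma \ref{volume-packing}, the joint density of the $m$ simultaneous congruences factors via the Chinese Remainder Theorem into a product of local densities at primes $p \mid \prod_j D_j$, up to a negligible error of order $O_m(N^{2m\gamma}/|I|)$.

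At a prime $p$ dividing exactly one $D_j$, the local density is $p^{-v_p(D_j)}$. At a prime $p$ dividing two distinct $D_j, D_k$, the congruences $p \mid \W(n+h_j)+b_{i_j}$ and $p \mid \W(n+h_k)+b_{i_k}$ -- recalling that $\W$ is invertible modulo $p$ -- collapse to a single residue class precisely when $p \mid H_{jk} := \W(h_k-h_j) + b_{i_j} - b_{i_k}$, and are incompatible otherwise; the same dichotomy persists at higher prime powers. Assembling the Euler product, then summing over $(s_j, u_j, d_j)$ using the convergence argument at the end of the proof of Proposition \ref{majorant} together with the identity $(\phi(W)/W)\sum_{d\leq N^\gamma,(d,W)=1}d^{-1} = \gamma\log N + o(\log N)$ (which cancels the normalisations $W/(\phi(W)\gamma\log N)$), yields a bound of the form
\[
 \E_{n\in I}\prod_{j \in [m]}\nu'(\W(n+h_j)+b_{i_j})
 \ll_{m} \prod_{1\leq j<k\leq m}F(H_{jk}) + o(1),
 \qquad F(h) := \prod_{p\mid h}\bigl(1 + O_m(p^{-1})\bigr).
\]

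To produce the required \emph{sum} over pairs, I would apply the elementary inequality $\prod_{j<k}x_{jk} \leq \sum_{j<k}x_{jk}^{\binom{m}{2}}$ (valid whenever $x_{jk}\geq 1$, via the maximum) and set
\[
 \sigma_{m_0}(h) := C_{m_0}\max_{i,i'\in[t]}F\bigl(\W h + b_i - b_{i'}\bigr)^{\binom{m_0}{2}}.
\]
The moment condition $\E_{n \in \Z_{M'}}\sigma_{m_0}(n)^q \ll_{m_0, q} 1$ then follows from the pointwise bound $F(n)^q \leq \prod_{p \mid n}(1+C_q/p)$ and the standard estimate $\sum_{n \leq X}\prod_{p\mid n}(1+C/p) \ll_C X$, a consequence of the usual bound $\sum_{n \leq X}\tau(n)\ll X\log X$. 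The hardest step is the uniform local analysis leading to the displayed bound on $F(H_{jk})$: because the outer sum $\sum_{(s_j)}\prod_j 2^{s_j}(\ldots)$ converges only marginally -- precisely through the $2^{-cs\log s}$ density decay exploited in Proposition \ref{majorant} -- the implicit constants in each $O_m(p^{-1})$ appearing in $F$ must be taken uniformly in $(s_j)$, and verifying that the resulting Euler-product corrections at diagonal primes do not overwhelm the tail of the $s$-summation is where most of the bookkeeping will live.
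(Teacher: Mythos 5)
Your treatment of the case of pairwise distinct shifted forms is essentially the paper's argument: expand $\nu'$ into divisibility indicators, apply volume packing, observe that for $p>w(N)$ a joint local density $\alpha(p^{a_1},\dots,p^{a_m})$ with two nonzero exponents can be positive only when $p$ divides $H_{jk}=\W(h_k-h_j)+b_{i_j}-b_{i_k}$, assemble an Euler product over $p\mid\Delta$, check that the $(s_j,i_j,u_j)$-summation still converges (the paper does this with the extra factor $\tau(u_k)$ coming from the $v_j\mid u_j$ sums, exactly the bookkeeping you flag), and finally convert the product over pairs into a sum by a max argument. The paper uses the cruder local factor $1+p^{-1/2}$ and packages $\sigma_{m_0}$ as $\exp\bigl(\sum_{p>w(N),\,p\mid\Delta(n)}O_{m_0}(p^{-1/2})\bigr)$, importing the moment bound from Green--Tao rather than proving it via $\sum_{n\leq X}\prod_{p\mid n}(1+C/p)\ll X$; these are cosmetic differences.

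The genuine gap is the degenerate case. The proposition explicitly allows the $h_j$ to coincide, and when $\W(n+h_j)+b_{i_j}$ and $\W(n+h_k)+b_{i_k}$ are the \emph{same} form (i.e.\ $H_{jk}=0$) your local analysis collapses: every prime ``collides'', $F(0)=\prod_p(1+O_m(p^{-1}))$ diverges, and your $\sigma_{m_0}(0)=C_{m_0}\max_{i,i'}F(b_i-b_{i'})^{\binom{m_0}{2}}$ is not even a well-defined element of $\R^+$ (take $i=i'$). This is not a removable technicality: the quantity being bounded then contains a factor $\nu'(\cdot)^2$, whose average is genuinely of size a power of $\log N$, so no $O_m(1)$ bound of the off-diagonal type can hold. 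The paper spends the first half of its proof on exactly this case: it applies H\"older together with $\tau(n)\ll_\eps n^\eps$ and the convergence of $\sum_{s,i}\sum_{u\in U(i,s)}2^s u^{-1/m}$ to get the crude bound $\E_n\prod_j\nu'(\cdot)\ll_{m,\eps}N^{\eps m}$, and then exploits the latitude in Proposition \ref{sigma_m moments} to \emph{define} $\sigma_{m_0}(0):=O(N^{1/4})=o(M')$, which still has bounded moments since the single point $n=0$ contributes $o(1)$ to $\E_{n\in\Z_{M'}}\sigma_{m_0}^q$. Without some such separate argument and a finite assignment of $\sigma_{m_0}$ at the degenerate differences, your proof does not establish the stated inequality for all admissible $h_1,\dots,h_m$.
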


The correlation estimate will be deferred to the next section, the
verification of the linear forms condition is an immediate consequence of
the following proposition.

\begin{proposition}\label{linear-forms-prop}
Let $1 \leq d,t \leq D$ and let $\Psi:\Z^d \to \Z^t$ be a system of
affine-linear forms, such that any exceptional prime, that is, any prime
$p$ for which there are $\psi_i$ and $\psi_j$ that are affinely related
modulo $p$, satisfies $p \leq w(N)$.
(Observe that we make no assumption on the coefficients of $\dot\Psi$.)
Then
$$\E_{n\in \Z^d \cap K} \prod_{j \in [t]} \nu'(\psi_j(n)) 
= 1 + O_D(N^{d - 1 +O_D(\gamma)}/\vol(K)) + o_D(1)$$
for every convex body $K \subseteq [-N,N]^d$ such that
$\Psi(K)\subseteq[1,N]^t$.
\end{proposition}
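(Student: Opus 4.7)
\medskip

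\textbf{Proof proposal for Proposition \ref{linear-forms-prop}.}

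My plan is to expand $\nu'$ into its defining divisor sum, swap the expectation with the resulting finite sums, apply the Volume Packing Lemma to each divisibility event, and then analyse the resulting arithmetic sum over local divisor densities. Substituting the definition of $\nu'$, the left-hand side becomes
\[
  (C')^{-t}\Bigl(\tfrac{W}{\phi(W)\gamma \log N}\Bigr)^t
  \sum_{(\vec s,\vec i,\vec u,\vec d)} 2^{|\vec s|_1}\,
  \E_{n\in\Z^d\cap K}\prod_{j\in[t]} 1_{e_j\,|\,\psi_j(n)}\,,
\]
where $e_j:=\lcm(u_j,d_j)$ with $u_j\in U(i_j,s_j)$ and $d_j\leq N^{\gamma}$, $(d_j,W)=1$. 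Remark (3) after Proposition~\ref{majorant} ensures $u_j\leq N^{\gamma}$ so every $e_j\leq N^{2\gamma}$, and every prime dividing any $e_j$ exceeds $w(N)$ and is therefore \emph{unexceptional} for $\Psi$ by hypothesis.

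Next, I apply Lemma~\ref{volume-packing} with $B\asymp N$ to each product of indicator functions, writing
\[
\E_{n\in\Z^d\cap K}\prod_{j} 1_{e_j|\psi_j(n)}
= \alpha(\vec e) + O\!\Bigl(\tfrac{N^{d-1}\lcm(\vec e)}{\vol(K)}\Bigr).
\]
Since $\lcm(\vec e)\leq N^{2\gamma t}$ holds uniformly, I can factor $N^{d-1+O_D(\gamma)}/\vol(K)$ out of the error. What remains is to bound the $\ell^1$-norm of the weight $2^{|\vec s|_1}(C')^{-t}(\tfrac{W}{\phi(W)\gamma\log N})^t$ against $\sum_{\vec e} 1$; but this sum factors as a $t$-fold product and by the computation of $\E_{n\leq N}\nu'(n)$ in the proof of Proposition~\ref{majorant} (with $2^s$ harmlessly replaced by a larger constant power, as noted at the end of that proof) it is $O_D(1)$. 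This yields the stated error term $O_D(N^{d-1+O_D(\gamma)}/\vol(K))$.

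For the main term $\vol(K)\cdot (C')^{-t}(\tfrac{W}{\phi(W)\gamma\log N})^t \sum 2^{|\vec s|_1}\alpha(\vec e)$, I exploit multiplicativity $\alpha(\vec e)=\prod_p \alpha(p^{v_p(e_1)},\ldots,p^{v_p(e_t)})$ combined with the unexceptional estimates \eqref{alpha-bound-0} and \eqref{alpha-bound}. The dominant contribution comes from tuples $\vec e$ that are pairwise coprime: for such tuples $\alpha(\vec e)=\prod_j e_j^{-1}$ and the multi-sum factors completely, producing
\[
\prod_{j=1}^{t}\Bigl[ (C')^{-1}\tfrac{W}{\phi(W)\gamma\log N}\sum_{s_j,i_j,u_j,d_j}2^{s_j}e_j^{-1}\Bigr]
= \prod_{j=1}^{t}(\E_{n\leq N}\nu'(n) + O(N^{-1+O(\gamma)})) = 1+o(1).
\]
The remaining contribution from tuples sharing at least one prime factor will be estimated by inserting, for each pair $i\neq j$ and each prime $p > w(N)$ with $p\mid \gcd(e_i,e_j)$, the discrepancy between the true $\alpha_p$ and the ``independent'' value $\prod_k p^{-v_p(e_k)}$. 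By Lemma~\ref{dependent_div_events}, each such prime contributes at most $O_{t,d,L}(p^{-2})$ to the defect; summing over primes $p>w(N)$ and over the $\binom{t}{2}$ pairs and dividing by the appropriate $\E\nu'$ normalisation gives a total loss of $O_{t,d,L}(1/w(N))=o(1)$, which is absorbed into the $o_D(1)$ error.

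The principal obstacle I anticipate is purely bookkeeping: keeping the factorisation of the main term honest through the coprime/non-coprime split, and ensuring that when I peel off one prime $p$ to exploit \eqref{alpha-bound}, the residual sum over the remaining shape of $\vec e$ is still bounded by $(\E\nu')^{t} \cdot (\text{finite constant})$ rather than blowing up. This is the reason the proof of Proposition~\ref{majorant} deliberately notes that $\sum 2^s/m_0!$ converges even after replacing $2^s$ by $a^s$: here I will use that margin with $a$ slightly larger than $2$ to absorb factors such as $\sum_{p}p^{-2}\cdot 2^{s}$ that arise from each prime collision.
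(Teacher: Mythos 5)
Your proposal follows essentially the same route as the paper's proof: expand $\nu'$ into its truncated divisor sums, apply Lemma \ref{volume-packing} to every divisibility event, split the resulting arithmetic sum into pairwise coprime and non-coprime tuples, dispose of the latter via Lemma \ref{dependent_div_events} together with the $a^s$ convergence margin noted at the end of the proof of Proposition \ref{majorant}, and identify the factored coprime main term with $\bigl(\E_{n\leq N}\nu'(n)\bigr)^t$. The only real difference is organisational: the paper removes the non-coprime $u_j$'s \emph{before} volume packing (by Cauchy--Schwarz and the moment bound of Lemma \ref{divisor-bound}) and only then treats the $d_j$'s, whereas you perform all the coprimality analysis after volume packing at the level of $\sum\alpha(\vec e)$; both orders work. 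One small correction to your error bookkeeping: the weighted count $\sum_{\vec s,\vec i,\vec u,\vec d}2^{|\vec s|_1}(\log N)^{-t}$ that multiplies the volume-packing error is \emph{not} $O_D(1)$ --- the computation of $\E\nu'$ is only bounded because of the $1/e_j$ factors, which you have discarded at that point --- but it is $N^{O_D(\gamma)}$ (there are at most $N^{O_D(\gamma)}$ choices of $(\vec u,\vec d)$ and the remaining weights are $N^{o(1)}$), which is still absorbed into the stated exponent $O_D(\gamma)$, exactly as in the paper's second claim.
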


\begin{proof}[Proof of Proposition \ref{linear forms estimate}]
The system $\Psi$ of affine-linear forms that appears in the linear forms
condition has the property that no two forms $\psi_i$, $\psi_j$ are
affinely related and that the coefficients of $\dot \Psi$ are bounded by
$D$. 
Thus every exceptional prime $p$ of $\Psi$ satisfies $p=O_{D}(1)$.
We have to show that
$$ \E_{n\in \Z^d \cap K} \prod_{j \in [t]} \nu'(\phi_j(n)) 
 = 1+ O_D(N^{d - 1 +O_D(\gamma)}/\vol(K)) + o_D(1) $$
with $$\phi_j(n) = \W \psi_j(n)+b_{i_j}~.$$
If $p>w(N)$ is a prime, then $\phi_i$ and $\phi_j$ are affinely
related modulo $p$ if and only if $\psi_i$ and $\psi_j$ are
affinely related modulo $p$, which proves the result in view of
Proposition \ref{linear-forms-prop}. 
\end{proof}

\subsection*{Proof of Proposition \ref{linear-forms-prop}}
The strategy of the proof is to show that all occurring dependent
divisibility events $\prod_{j\in [t]} 1_{a_i|\psi_i(n)}$ where the $a_i$
are not pairwise coprime have a negligible contribution. 
Removing those, the densities of the remaining events will depend on the
respective choice of $a_1,\dots, a_t$ but are, up to a small error,
independent of the $\psi_i$.

Recalling the definition \eqref{nu-dashed} of $\nu'$, our task is to show
that
\begin{align*}
\E_{n\in \Z^d \cap K} \prod_{j \in [t]} 
 \Bigg( 
   \sum_{s = 2/\gamma}^{(\log \log N)^3}
   2^s
   \sum_{\substack{i =  \log_2 s - 2}}^{6 \log \log \log N}
   \sum_{u_j \in U(i,s)}
   1_{u_j | \psi_j(n)} &{\tilde \tau}'_{\gamma}(\psi_j(n)) 
 \Bigg) \\
=&~C'^t + O_D\bigg( \frac{ N^{d - 1 +O_D(\gamma)}}{\vol(K)}\bigg) +
  o_D(1).
\end{align*}
An arbitrary cross term that appears when multiplying out is of the form
\begin{align}\label{cross term} 
 \E_{n \in \Z^d \cap K} 
 \prod_{j \in [t]}
 \sum_{u_j \in U(i_j,s_j)} 
 2^{s_j} {\tilde \tau}'_{\gamma}(\psi_j(n)) 
 1_{u_j|(\psi_j(n))}~. 
\end{align}
The sets $U(i,s)$ were defined in the statement of Proposition
\ref{majorant}. We will make use of two of their properties, namely that
any prime divisor $p$ of $u \in U(i,s)$ satisfies 
$p \gg N^{1/(\log\log N)^3}$ and that $u \leq N^{\gamma}$ for 
$u \in U(i,s)$.

The removal of dependent divisibility events will be carried out in a
sequence of steps. The first is the following claim.
\begin{claim}
The cross term \eqref{cross term} equals
\begin{align} \label{cross term 2}
 &\dsum_{u_1,\dots, u_t} 
 \E_{n \in \Z^d \cap K}~
 \prod_{j\in [t]} 
 2^{s_j} 1_{u_j|\psi_j(n)} 
 \frac{W}{\phi(W) \gamma \log N}
 \sum_{v_j|u_j} 
 \sum_{\substack{d_j \leq N^{\gamma}/v_j\\ (d_j, u_j W )=1}}
 1_{d_j|\psi_j(n)} \\ \nonumber
 &\qquad+ O_{D}(N^{-(\log \log N)^{-3}/4}) ~, 
\end{align}
where the notation $\dsum_{u_1, \dots, u_t}$ indicates that the summation
is extended only over pairwise coprime choices of $u_1, \dots, u_t$,
where $u_j \in U(i_j,s_j)$ for each $j$.
\end{claim}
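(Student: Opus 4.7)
The strategy is to expand $\tilde\tau'_\gamma$ into its defining divisor sum and then reorganise via two reductions, both of which introduce errors of the form $N^{-c/(\log\log N)^{O(1)}}$ that absorb into the claimed bound. The reductions are: (A) replacing in each factor $\tilde\tau'_\gamma(\psi_j(n))$ the divisor sum by one reflecting the $u_j$-part structure; and (B) restricting the outer summation to pairwise coprime tuples $(u_1,\dots,u_t)$.

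For (A), I would substitute
$$\tilde\tau'_\gamma(\psi_j(n)) = \frac{W}{\phi(W)\gamma\log N}\sum_{\substack{D_j\leq N^\gamma\\(D_j, W)=1}} 1_{D_j|\psi_j(n)}.$$
Each $u_j \in U(i_j, s_j)$ is a product of distinct primes $p \geq N^{1/2^{i_j+1}} \gg w(N)$, hence squarefree and coprime to $W$. Consequently every $D_j$ with $(D_j, W)=1$ factors uniquely as $D_j = v_j d_j$ with $v_j = \gcd(D_j, u_j^\infty)$ and $(d_j, u_j)=1$, and restricting to squarefree $v_j$ (equivalently $v_j \mid u_j$) produces, together with the factor $1_{u_j \mid \psi_j(n)}$, exactly the double sum on the right of the claim. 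The residual $D_j$'s are those with $p^2 \mid D_j$ for some $p \mid u_j$; these are controlled via
$$\E_n 1_{u_j|\psi_j(n)}1_{D_j|\psi_j(n)} \ll \frac{1}{\mathrm{lcm}(u_j,D_j)}$$
from Lemma~\ref{volume-packing}. Summing over the residual $D_j$, over $p \mid u_j$, over $u_j \in U(i_j, s_j)$ (whose sums $\sum 1/u_j$ are handled as in the proof of Proposition~\ref{majorant}), and over $(i_j, s_j)$, the saving from $p_{\min}(u_j)^{-1} \leq N^{-1/(\log\log N)^{O(1)}}$ dominates the factor $2^{s_j} \leq 2^{(\log\log N)^3} = N^{o(1)}$ as well as the combinatorial blow-up. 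In the product over $j$, the remaining $t-1$ factors are bounded crudely by $\tilde\tau'(\psi_{j'}(n))$ via Lemma~\ref{divisor-bound}.

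For (B), if two indices $u_i, u_j$ share a prime $p$, then $p \mid \psi_i(n)$ and $p \mid \psi_j(n)$ simultaneously. Since $p \gg w(N)$, the prime $p$ is non-exceptional for $\Psi$ by the hypothesis of Proposition~\ref{linear-forms-prop}, so the joint divisibility has density $O(p^{-2})$ by \eqref{alpha-bound}. Summing over the $O(t^2)$ pairs and over primes $p$ in the common range of the relevant $U(i,s)$ gives $\sum_p p^{-2} \ll N^{-c/(\log\log N)^{O(1)}}$; estimating the other factors as in (A) yields an error of the claimed form.

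The principal technical obstacle is the bookkeeping: one must ensure that the $N^{-c/(\log\log N)^{O(1)}}$ savings (from $p^{-1}$ in (A) or $p^{-2}$ in (B)) comfortably dominate the combined sub-polynomial factors $\prod_j 2^{s_j}$ together with the nested summations over $u_j$, $(i_j, s_j)$. This is a numerology check exploiting that $\log 2^{(\log\log N)^3} = O((\log\log N)^3)$ is dwarfed by $\log N/(\log\log N)^{O(1)}$ for large $N$, which is precisely the regime in which the error bound $N^{-(\log\log N)^{-3}/4}$ of the claim lives.
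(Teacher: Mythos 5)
Your proposal identifies exactly the two error sources that the paper's proof isolates --- non-coprime tuples $(u_1,\dots,u_t)$, and divisors $D_j$ whose $u_j$-part fails to be squarefree (equivalently, $n$ with $(u_j,\psi_j(n)/u_j)>1$) --- and correctly observes that both events force $p^2\mid\prod_{i}\psi_i(n)$ for some prime $p\gg N^{1/(\log\log N)^3}$, which is precisely where the power saving comes from; the combinatorial identity behind the main term is also right. The one step that does not work as literally written is the decoupling of the product over $j$ when you bound the error: you say the remaining $t-1$ factors ``are bounded crudely by ${\tilde\tau}'(\psi_{j'}(n))$ via Lemma~\ref{divisor-bound}'', but $\E_n\bigl[A_j(n)\prod_{j'\neq j}B_{j'}(n)\bigr]$ cannot be estimated by treating the factors separately, and the $B_{j'}$ are not pointwise bounded (${\tilde\tau}'_{\gamma}$ can be as large as $\exp(c\log N/\log\log N)$), so the $N^{-1/(\log\log N)^{O(1)}}$ saving from the bad index $j$ does not automatically survive multiplication by the other factors. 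The paper resolves this with a single Cauchy--Schwarz: the total discrepancy is at most
$\bigl(\E_{n}\sum_{p}1_{p^2\mid\prod_i\psi_i(n)}\bigr)^{1/2}$
times
$\bigl(\E_{n}\prod_{j}2^{2s_j}{\tilde\tau}'^2_{\gamma}(\psi_j(n))\bigl(\sum_{u_j}1_{u_j\mid\psi_j(n)}\bigr)^2\bigr)^{1/2}$,
where the first factor is $O(N^{-(\log\log N)^{-3}/2})$ and the second is $2^{O_D((\log\log N)^3)}(\log N)^{O_D(1)}$ by the moment bound of Lemma~\ref{divisor-bound}; this also spares you the term-by-term volume-packing errors. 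With that substitution your argument coincides with the paper's.
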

\begin{remark}
 Since the sums over $s_j$ and $i_j$ only have $O_D((\log \log N)^4)$
terms, the contribution of error terms from all cross terms is
bounded by $O_D(N^{-(\log \log N)^{-3}/8}) = o_D(1)$.
\end{remark}

\begin{proof}Recalling the definition of $\tilde \tau'_{\gamma}$, we see
that all we
have to do is, firstly, to bound the contribution of non-coprime
choices of $u_1, \dots, u_t$ to \eqref{cross term}, and, secondly, to
bound the contribution of such $n \in \Z^d \cap K$ to \eqref{cross term}
for which 
$(u_j, \psi_j(n)/u_j)>0$ for some $j$.
Observe that whenever $(u_i,u_j)>1$, there is some $p \gg N^{1/(\log \log
N)^3}$ such that $p^2| \prod_{i\in[t]} \psi_i(n)$.
We also find $p \gg N^{1/(\log \log N)^3}$ such that 
$p^2| \prod_{i\in[t]} \psi_i(n)$ when $n$ satisfies 
$(u_j, \psi_j(n)/u_j)>0$.
By the properties of the function $\alpha$, in particular by
\eqref{alpha-bound}, we have 
\begin{align*}
 \sum_{\substack{N^{(\log \log N)^{-3}}\\<p<N^{\gamma}}} 
 \E_{n \in \Z^d \cap K} 1_{p^2|\prod_{i}\psi_i(n)} \ll_t
 \sum_{\substack{N^{(\log \log N)^{-3}}\\<p<N^{\gamma}}} p^{-2} 
 = O_t\left(N^{-(\log\log N)^{-3}}\right)~.
\end{align*}
Cauchy-Schwarz yields
\begin{align*}
&|\eqref{cross term}-\eqref{cross term 2}| <
\sum_{\substack{u_1,\dots, u_t\\(u_i,u_j)>1}}
 \E_{n \in \Z^d \cap K}~
 \prod_{j\in [t]} 
 2^{s_j}  {\tilde \tau}'_{\gamma}(\psi_j(n)) 1_{u_j|\psi_j(n)}\\
& \qquad \qquad \qquad \qquad \qquad+ \sum_{u_1,\dots, u_t}
 \E_{n \in \Z^d \cap K}~
 \prod_{j\in [t]} 
 2^{s_j}  {\tilde \tau}'_{\gamma}(\psi_j(n))
 1_{u_j|\psi_j(n)}
 1_{(u_j, \psi_j(n)/u_j)>1} \\
 & \ll  \Bigg( 
 \E_{n \in \Z^d \cap K}\!
 \sum_{\substack{N^{(\log \log N)^{-3}}\\ <p<N^{\gamma}}} 
 1_{p^2|\prod_{i}\psi_i(n)} 
 \Bigg)^{\frac{1}{2}} 
 \Bigg( \E_{n \in \Z^d \cap K}
 \prod_{j\in [t]}  2^{2s_j} 
 {\tilde \tau}'^2_{\gamma}(\psi_j(n)) 
 \Big(\!\sum_{\substack{u_j \in \\ U(i_j,s_j)}} 1_{u_j|\psi_j(n)}\Big)^2
 \Bigg)^{\frac{1}{2}}.
\end{align*}
The second factor may be bounded with the help of the $k$-th
moment estimate from Lemma \ref{divisor-bound} by
\begin{align*}
 \Big(\E_{n \in \Z^d \cap K}~ 2^{2 D (\log \log N)^3}
 \prod_{j\in [t]} 
 {\tau}^4(\psi_j(n)) \Big)^{1/2}
\ll_D 2^{D (\log \log N)^3} (\log N)^{O_D(1)}~.
\end{align*}
 This proves the claim since
$$(2^{(\log \log N)^3} \log N)^{O_{D}(1)} N^{-(\log \log N)^{-3}/2}
\ll N^{-(\log \log N)^{-3}/4}~.$$
\end{proof}

We proceed to analyse \eqref{cross term 2}.
To simplify the notation, fix any choice of integer tuples 
$u=(u_1, \dots u_t)$ and $v=(v_1, \dots, v_t)$ and let $\mathcal D_{u,v}$
be the set of all tuples $(d_1,\dots,d_t)$ satisfying $(d_i, u_i W) = 1$
and $d_i \leq N^{\gamma}/v_i$ for $i=1, \dots t$.
With this notation, we show the following.
\begin{claim} The main term of \eqref{cross term 2} satisfies
\begin{align}\nonumber
&\dsum_{u_1,\dots, u_t} 
 \E_{n \in \Z^d \cap K}~
 \prod_{j\in [t]} 
 2^{s_j} 1_{u_j|\psi_j(n)} 
 \frac{W}{\phi(W) \gamma \log N}
 \sum_{v_j|u_j} 
 \sum_{\substack{d_j \leq N^{\gamma}/v_j\\ (d_j, u_j W )=1}}
 1_{d_j|\psi_j(n)}\\
\label{main-term}
&= (1 + o_{D}(1))
 \left(\frac{W}{\phi(W)\gamma\log N}\right)^t
 \dsum_{u_1,\dots, u_t}\, \sum_{v_1|u_1,\dots,v_t|u_t}\,
 \dsum_{\substack{d_1,\dots ,d_t\\ \in \mathcal D_{u,v}}} 
 \prod_{j \in [t]} 
 \frac{2^{s_j}}{u_j} \frac{1}{d_j} \\
\nonumber
&\qquad + O(N^{d-1+O_t(\gamma)}/\vol(K))~.
\end{align}
\end{claim}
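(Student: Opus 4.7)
The plan is to apply the Volume packing argument (Lemma \ref{volume-packing}) to each divisibility event, use the multiplicativity of the local divisor density $\alpha$ to extract the diagonal contribution, and then show that the off-diagonal terms contribute negligibly. As a preparation, since $(d_j, u_j W) = 1$ forces $(u_j, d_j) = 1$, I would rewrite $1_{u_j \mid \psi_j(n)} 1_{d_j \mid \psi_j(n)} = 1_{u_j d_j \mid \psi_j(n)}$, so that Lemma \ref{volume-packing} (with $B = N$) yields
\begin{align*}
 \E_{n \in \Z^d \cap K} \prod_{j \in [t]} 1_{u_j d_j \mid \psi_j(n)}
 = \alpha(u_1 d_1, \ldots, u_t d_t) + O\!\left(\frac{N^{d-1} \lcm(u_1 d_1, \ldots, u_t d_t)}{\vol(K)}\right).
\end{align*}
Every $u_j \leq N^{\gamma}$ by the third remark after Proposition \ref{majorant}, and each $d_j \leq N^{\gamma}$ by the definition of $\mathcal{D}_{u,v}$, so $\lcm(u_1 d_1, \ldots, u_t d_t) \leq N^{2 t \gamma}$; the number of tuples $(u,v,d)$ is at most $N^{O_t(\gamma)}$; and the external prefactor $(W/\phi(W)\gamma\log N)^t \prod_j 2^{s_j}$ is $N^{o(1)}$. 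Combining these, the total Volume packing error contributes $O(N^{d-1+O_t(\gamma)}/\vol(K))$, in line with the right-hand side of \eqref{main-term}.

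Next I would evaluate the main term using multiplicativity:
\begin{align*}
\alpha(u_1 d_1, \ldots, u_t d_t) = \prod_p \alpha(p^{e_1(p)}, \ldots, p^{e_t(p)}), \qquad e_j(p) := v_p(u_j d_j).
\end{align*}
Every prime $p$ appearing in this product satisfies $p > w(N)$: for $p \mid d_j$ by $(d_j, W) = 1$, and for $p \mid u_j$ because each $u \in U(i_j, s_j)$ is a product of primes in $[N^{1/2^{i_j+1}}, N^{1/2^{i_j}}]$ whose lower endpoint exceeds $w(N)$ for the relevant range of $i_j$. Under the hypothesis that every exceptional prime of $\Psi$ is at most $w(N)$, every such $p$ is unexceptional; hence \eqref{alpha-bound-0} gives $\alpha(p^{e_1(p)}, \ldots) = p^{-e_{j_0}(p)}$ at any prime $p$ with exactly one non-zero exponent $e_{j_0}(p)$. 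Consequently, on the \emph{diagonal} where $u_1 d_1, \ldots, u_t d_t$ are pairwise coprime, the local factors multiply to $\prod_j (u_j d_j)^{-1}$, which reproduces exactly the right-hand side of \eqref{main-term}.

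The main obstacle is bounding the off-diagonal contribution, namely tuples in which some prime $p$ divides two or more of the $u_j d_j$'s. Since the $u_j$ are pairwise coprime and $(d_j, u_j) = 1$, any such collision occurs between $d_i$ and $d_j$ or between $d_i$ and $u_j$ for some $i \neq j$. For the first type, \eqref{alpha-bound} provides $\alpha(p^{e_1(p)}, \ldots) \leq p^{-\max_j e_j(p) - 1}$ at the colliding prime; combined with the trivial bound $\sum_{d \leq N^\gamma : p \mid d} 1/d \ll (\log N)/p$ on the restricted $d$-sums, I expect the contribution to be at most $(\log N)^{O_t(1)}/p^2$ times the diagonal, and summation over $p > w(N)$ then yields $O(1/w(N)) = o(1)$. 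For the second type, the colliding prime $p$ must lie in the support of some $u_j$, a set of $O((\log\log N)^3)$ primes all exceeding $N^{1/(\log\log N)^3}$, so the total loss is at most $O((\log\log N)^3 / N^{1/(\log\log N)^3}) = o(1)$. Together, these estimates deliver both the multiplicative factor $1 + o_D(1)$ in the main term and, simultaneously, upgrade the unrestricted $d$-sum on the left of \eqref{main-term} to the pairwise coprime $\dsum_d$ on the right.
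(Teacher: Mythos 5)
Your proposal is correct and follows essentially the same route as the paper: apply the volume packing lemma to each term (with the same count of $N^{O_t(\gamma)}$ tuples and $\lcm \leq N^{O_t(\gamma)}$ giving the error $O(N^{d-1+O_t(\gamma)}/\vol(K))$), then use multiplicativity of $\alpha$ together with the bound \eqref{alpha-bound} for unexceptional primes $p > w(N)$ to restrict to pairwise coprime tuples at a cost of $1+O(1/w(N)) = 1+o_D(1)$. The only cosmetic difference is that the paper invokes Lemma \ref{dependent_div_events} wholesale to absorb all non-coprime $d$-tuples, whereas you additionally make explicit the (negligible) collisions between a $d_i$ and a $u_j$ with $i \neq j$, a point the paper treats implicitly.
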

\begin{remark}
 Similar as with the previous claim, the fact that the sums over
$s_j$ and $i_j$ only have $O_D((\log \log N)^4)$
terms implies that the overall contribution of the error terms from here
is still $O(N^{d-1+O_t(\gamma)}/\vol(K))$.
\end{remark}
\begin{proof}
Multiplying by the normalising factor of $\tilde \tau'_{\gamma}$, and
applying the volume packing lemma
(Lemma \ref{volume-packing}), we have
\begin{align*} 
&\left( \frac{W}{\phi(W) \gamma \log N} \right)^{-t} 
\dsum_{u_1,\dots,u_t}
 \E_{n \in \Z^d \cap K}~
 \prod_{j\in [t]} 2^{s_j} 
 \sum_{v_j|u_j} 
 \sum_{\substack{d_j \leq N^{\gamma}/v_j \\ (d_j,u_jW)=1}}
 1_{d_ju_j|\psi_j(n)} \\
&= \dsum_{u_1,\dots,u_t} \prod_{j\in [t]} 2^{s_j} 
 \sum_{v_1|u_1, \dots, v_t|u_t}
 \sum_{\substack{d_1,\dots,d_t\\ \in \mathcal D_{u,v}}}  
 \Bigg\{ \frac{\alpha (d_1, \dots, d_t)}{u_1 \dots u_t}
+ O\Big(\frac{N^{d-1}}{\vol(K)} \lcm(u_1d_1,\dots, u_td_t)\Big)\Bigg\}~.
\end{align*}
The error term is of order $O(N^{d-1 + O_t(\gamma)}/\vol(K))$.
Its total contribution is also seen to be 
$O(N^{d-1 + O_t(\gamma)}/\vol(K))$, since $2^{s_j} \leq 2^{(\log \log
N)^3}$, since $W/(\phi(W) \gamma \log N) \ll 1$ and since the sums over
the $u_j$ and $v_j$ have altogether $N^{O_t(\gamma)}$ terms.

Concerning the main term, Lemma \ref{dependent_div_events} allows us to
also pass to only summing over pairwise coprime choices of 
$d_1,\dots d_t$: for a fixed choice of $u$ and $v$ the sum over
$d_1,\dots,d_t$ in the main term satisfies
\begin{align*}
\dsum_{(d_1,\dots,d_t) \in \mathcal D_{u,v} } \alpha (d_1, &\dots, d_t) 
\leq \sum_{(d_1,\dots,d_t) \in \mathcal D_{u,v} } \alpha (d_1, \dots, d_t)
\\
&\leq \dsum_{(d_1,\dots,d_t) \in \mathcal D_{u,v} } 
 \alpha (d_1, \dots, d_t)
 \prod_{\substack{p \nmid d_1 \dots d_t \\ p > w(N)}} 
 \Bigg(1 +  
 \sum_{\substack{a_1, \dots, a_t \\ \text{at least two } a_i \not= 0}} 
 \alpha(p^{a_1}, \dots, p^{a_t}) \Bigg) \\
&\leq \dsum_{(d_1,\dots,d_t) \in \mathcal D_{u,v}}
 \alpha (d_1, \dots, d_t)
 \prod_{p > w(N)} (1+ O_D(p^{-2}))\\
&\leq (1+ O_D(1/w(N)))
 \dsum_{(d_1,\dots,d_t) \in \mathcal D_{u,v}} \alpha (d_1, \dots, d_t) \\
&= (1+ O_D(1/w(N)))
 \dsum_{(d_1,\dots,d_t) \in \mathcal D_{u,v}} \frac{1}{d_1 \dots d_t}~,
\end{align*}
which implies the claim.
\end{proof}
The last remaining step will be to show that, picking up only another
$(1+o_{D}(1))$ factor, we can move the product over $j$ in front in
the term \eqref{main-term}.
\begin{claim} Summing all terms \eqref{main-term}, we have
\begin{align} \nonumber
&\sum_{s_1, \dots , s_t} \sum_{i_1, \dots, i_t}
 \left(\frac{W}{\phi(W)\gamma\log N}\right)^t
 \dsum_{u_1,\dots, u_t} \sum_{v_1|u_1,\dots,v_t|u_t}
 \dsum_{\substack{(d_1,\dots ,d_t)\\ \in \mathcal D_{u,v}}} 
 \prod_{j\in [t]} 
 \frac{2^{s_j}}{u_j} \frac{1}{d_j}
\\ \nonumber
&=(1+o_D(1))\times \\ \label{j-product}
&\qquad 
\prod_{j\in [t]} \sum_{s_j = 2/\gamma}^{(\log \log N)^3}
 \sum_{\substack{i_j = \log_2 s_j - 2}}^{6 \log \log \log N}
 \sum_{u_j \in U(i_j,s_j)} \frac{2^{s_j}}{u_j} 
 \frac{W}{\phi(W)\gamma \log N} 
 \sum_{v_j|u_j}
 \sum_{\substack{d_j \leq N^{\gamma}/v_j \\(d_j,u_jW)=1 }}\frac{1}{d_j}\\
\nonumber
& \quad + O_t(N^{-1/(\log\log N)^3})~. 
\end{align} 
\end{claim}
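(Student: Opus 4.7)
The plan is to remove the two pairwise coprimality restrictions that couple the indices $j$ in \eqref{main-term}---the $\dsum_{u_1,\dots,u_t}$ over pairwise coprime $(u_j)$ and the coprimality hidden inside $\dsum_{(d_1,\dots,d_t) \in \mathcal D_{u,v}}$---at the cost of a multiplicative factor $1 + o_D(1)$ plus a small absolute error. Once both are removed, every remaining constraint in $\mathcal D_{u,v}$ (namely $d_j \leq N^\gamma/v_j$ and $(d_j,u_j W)=1$) involves only the single index $j$, so the inner summand factors across $j$ and the outer sums over $s_j, i_j$ split accordingly, producing exactly \eqref{j-product}.

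For the $u$'s, I would bound the non-coprime contribution via the union bound
$$1 - \prod_{j<k} 1_{(u_j,u_k)=1} \;\leq\; \sum_{j<k} \sum_{p} 1_{p \mid u_j}\, 1_{p \mid u_k}.$$
Since every prime divisor of any $u_j \in U(i_j,s_j)$ lies in $I_{i_j}$ and $i_j \leq 6\log\log\log N$, the relevant primes satisfy $p \gg N^{1/(\log\log N)^{O(1)}}$. Factoring $u_j = p u_j'$ with $u_j'$ a product of $m_0(i_j,s_j)-1$ distinct primes in $I_{i_j}$ gives
$$\sum_{\substack{u_j \in U(i_j,s_j) \\ p \mid u_j}} \frac{1}{u_j} \;\leq\; \frac{m_0(i_j,s_j)}{p}\sum_{u_j \in U(i_j,s_j)} \frac{1}{u_j},$$
so that the total non-coprime contribution is bounded by a relative factor
$$\binom{t}{2}\,(\textstyle\max_j m_0(i_j,s_j))^2 \sum_{p \gg N^{1/(\log\log N)^{O(1)}}} p^{-2} \;\ll_t\; N^{-1/(\log\log N)^{O(1)}}$$
of the unrestricted product. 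This is $o_D(1)$ and, once inserted into the weighted outer sum, produces an absolute error well inside the stated $O_t(N^{-1/(\log\log N)^3})$.

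The same union-bound argument handles the pairwise coprimality of the $d_j$'s. Here every prime divisor of any $d_j$ exceeds $w(N) = \tfrac12 \log\log N$ because $(d_j,W)=1$, and restricting to $p \mid d_j$ cuts $\sum_{d_j \leq N^\gamma/v_j,\,(d_j,u_jW)=1} 1/d_j$ by a factor $O(1/p)$; the resulting relative error is at most $\binom{t}{2}\sum_{p > w(N)} p^{-2} \ll 1/w(N) = o_D(1)$. After both removals and unfolding of $\mathcal D_{u,v}$ factor-by-factor, the product over $j$ pulls out and one obtains \eqref{j-product} with a $(1+o_D(1))$ prefactor. The main subtlety---and the reason for the stated absolute error---is verifying that, once these two multiplicative corrections are inserted, the rapidly growing weights $2^{s_j}$ do not cause the outer sum to blow up. This is exactly the convergence already established in the proof of Proposition~\ref{majorant}, which remains valid if $2^{s_j}$ is replaced by $a^{s_j}$ for any fixed $a>0$; consequently, the $(1+o_D(1))$ factor may be absorbed into a total error of the claimed order.
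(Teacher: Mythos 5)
Your proposal is correct and follows essentially the same route as the paper: both arguments drop the pairwise coprimality of the $u_j$'s using the fact that any shared prime factor is huge (of size $N^{1/(\log\log N)^{O(1)}}$), drop the coprimality of the $d_j$'s at the cost of a $1+O(1/w(N))$ factor coming from $\sum_{p>w(N)}p^{-2}$, and then invoke the convergence from the proof of Proposition~\ref{majorant} (with $2^s$ replaced by $a^s$) to control the outer sums. The paper merely packages these steps differently --- the $d$-step via $\prod_j d_j^{-1}\leq\alpha(d_1,\dots,d_t)$ and Lemma~\ref{dependent_div_events}, and the $u$-step via a $\gcd$ estimate rather than your union bound over shared primes --- so the two proofs are computationally equivalent.
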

\begin{proof}
The new expression \eqref{j-product} includes additional terms containing
non-coprime tuples $u_1,\dots,u_t$ or $d_1,\dots,d_t$. 
To see that these terms only contribute an additional
$(1+o_D(1))$ factor, first consider the $d_j$'s:
Note that 
$$\prod_{j \in [t]} \frac{1}{d_j} \leq \alpha(d_1, \dots, d_t)~.$$
Thus, an application of Lemma \ref{dependent_div_events}, similar to the
one for the previous claim, yields
$$ 
  \dsum_{(d_1,\dots,d_t) \in \mathcal D_{u,v}} 
   \prod_{j\in [t]} \frac{2^{s_j}}{u_j} \frac{1}{d_j}
 = (1+o_{D}(1))
  \sum_{(d_1,\dots,d_t) \in \mathcal D_{u,v}} 
   \prod_{j\in [t]} \frac{2^{s_j}}{u_j} \frac{1}{d_j}~. 
$$
 
It remains to show that we can also drop the coprimality condition
on the $u_j$'s. 
The contribution to \eqref{j-product} from non-coprime choices 
$u_1,\dots, u_t$ can be bounded as follows.
Suppose $(u_{j'},u_{j''})>1$. Then in particular
$(u_{j'},u_{j''})>N^{1/(\log \log N)^3}$, since any prime factor
of a $u_j$ is greater than $N^{1/(\log \log N)^3}$ by definition. Thus
$$\prod_{j\in [t]} \frac{2^{s_j}}{u_j} 
  \leq \frac{1}{N^{1/(\log \log N)^3}} \prod_{j\in[t]:j \not = j',j''}
  \frac{2^{s_j}}{u_j} 
\left(\frac{2^{2s_{j'}}}{u_{j'}} +\frac{2^{2s_{j''}}}{u_{j''}}
\right)~.$$
Since
$$\frac{W}{\phi(W)\gamma \log N} 
 \sum_{\substack{d_j \leq N^{\gamma}/v_j \\(d_j,u_jW)=1 }}\frac{1}{d_j}
\ll 1$$ the contribution to \eqref{j-product} from bad
$(u_i)_{i\in[t]}$ is at most
$$
\binom{t}{2} \frac{1}{N^{1/(\log \log N)^3}}
\prod_{j\in [t-1]} \sum_{s_j \geq 2/\gamma} 
 \sum_{\substack{i_j \geq \\ \log_2 s_j - 2}}
 \sum_{u_j \in U(i_j,s_j)} \frac{2^{2s_j}}{u_j}
\ll_t \frac{1}{N^{1/(\log \log N)^3}}~,
$$
where the convergence of the three nested sums follows from the proof of
Proposition \ref{majorant}.
This proves the claim.
\end{proof}

To summarise, we have shown that 
\begin{align*}
& \E_{n\in \Z^d \cap K} \prod_{j \in [t]}
   \nu'(\psi_j(n))
 = \E_{n\in \Z^d \cap K} \prod_{j \in [t]}
   \Bigg( 
   \sum_{s = 2/\gamma}^{(\log \log N)^3}
   \sum_{i = \log_2 s - 2}^{6 \log \log \log N}
   \sum_{u_j \in U(i,s)}
   1_{u_j | \psi_j(n)}
   {\tilde \tau}'_{\gamma}(\psi_j(n)) 
   \Bigg)
   \\
&= (1+o_{D}(1))
  \prod_{j \in [t]}
  \frac{W}{\phi(W) \gamma \log N}
  \Bigg( 
  \sum_{s = 2/\gamma}^{(\log \log N)^3} 
  \sum_{i =  \log_2 s - 2}^{6 \log \log \log N} 
  \sum_{u \in U(i,s)} \frac{2^s}{u}\sum_{v|u}
  \sum_{\substack{d \leq N^{\gamma}/v\\(d,uW)=1}} \frac{1}{d} 
  \Bigg) \\
&\quad + O\Big(\frac{N^{d-1 + O_D(\gamma)}}{\vol(K)}\Big) 
  + o_D(1)
  ~.
\end{align*}
Regarding the last equation in the special and already known case 
$\E_{n \leq N} \nu'(n) = 1 + o(1)$ of the linear forms condition implies
that each of the factors on the right hand side, which is independent of
$\Psi$, equals $C'(1+o(1))$. This completes the proof of Proposition
\ref{linear-forms-prop}.

\section{The correlation condition}

This section provides a proof of Proposition \ref{verification of
C-Condition}.

Due to the similar structure of our majorant to that of the majorant used
in \cite{green-tao-longprimeaps,green-tao-linearprimes}, the
function $\sigma_m$ can be chosen in the same manner as in
\cite{green-tao-longprimeaps,green-tao-linearprimes}.

\begin{proposition}[Green-Tao \cite{green-tao-longprimeaps}]
\label{sigma_m moments}
Let $\Delta: \Z \to \Z$ be the polynomial
defined by 
$\Delta(n) = \prod_{1 \leq j < j' \leq m} (\W n + b_{i_j} - b_{i_{j'}})$,
define $\sigma_m : \Z_{M'} \to \R^+$ to be 
$$ \sigma_m(n)
 := \exp \Bigg( \sum_{p>w(N),~p|\Delta(n)} O_m(p^{-1/2}) \Bigg) ~.$$
for $n>0$ and suppose $\sigma_m(0)=o(M')$. 
Then
$\E_{n \in \Z_{M'}} \sigma_m^q(n) \ll_{m,q} 1$.
\end{proposition}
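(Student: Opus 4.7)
The proof is an adaptation of the Green--Tao moment bound from \cite[Appendix D]{green-tao-linearprimes}. The polynomial $\Delta$ is a product of $\binom{m}{2}$ affine-linear forms $\W n + (b_{i_j}-b_{i_{j'}})$; for every prime $p>w(N)$, one has $p\nmid \W$, so these linear factors have pairwise distinct roots modulo $p$. Hence $\Delta$ falls into the class treated in \cite{green-tao-linearprimes}, and the moment estimate carries over with only cosmetic changes.

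First I would bound $\sigma_m^q$ pointwise. Using $\exp(x)\leq 1+2x$ for $x$ small (which holds for $p>w(N)$ as $w(N)\to\infty$), we get
\[
 \sigma_m^q(n) \leq \prod_{\substack{p\mid\Delta(n)\\ p>w(N)}}\bigl(1+C_{m,q}\,p^{-1/2}\bigr) = \sum_{d} C_{m,q}^{\omega(d)}\,d^{-1/2}\,1_{d\mid\Delta(n)},
\]
where $d$ ranges over squarefree positive integers whose prime divisors all exceed $w(N)$. Taking expectations and applying the volume-packing estimate (Lemma~\ref{volume-packing}) together with the Chinese remainder theorem count $\rho(d)\leq\binom{m}{2}^{\omega(d)}$ for the number of roots of $\Delta$ mod $d$ (each of the $\binom{m}{2}$ linear factors contributes at most one root mod $p$ when $p \nmid \W$), one obtains
\[
 \E_{n\in\Z_{M'}} 1_{d\mid \Delta(n)} \leq \frac{\binom{m}{2}^{\omega(d)}}{d} + \frac{\binom{m}{2}^{\omega(d)}}{M'}.
\]
The principal contribution is the convergent Euler product
\[
 \sum_d \frac{(C_{m,q}\binom{m}{2})^{\omega(d)}}{d^{3/2}}=\prod_{p>w(N)}\bigl(1+O_{m,q}(p^{-3/2})\bigr) = O_{m,q}(1).
\]

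The main obstacle is controlling the error $\frac{1}{M'}\sum_d (C_{m,q}\binom{m}{2})^{\omega(d)}d^{-1/2}$. A direct estimate using $d\le\max_n|\Delta(n)|\leq(\W M')^{\binom{m}{2}}$ combined with Rankin's bound only yields $M'^{\binom{m}{2}/2-1+o(1)}$, which is $o(1)$ solely for $m\leq 2$. To overcome this I would introduce a cutoff $Y=M'^{\epsilon}$ and split $\sigma_m^q(n)=A(n)B(n)$, where $A$ collects the primes $w(N)<p\leq Y$ and $B$ the primes $p>Y$. For $A$, the restriction of $d$ to prime factors $\leq Y$ makes the error term $o(1)$ for $\epsilon=\epsilon(m)$ chosen small enough. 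For the tail factor $B$, the number of primes $p>Y$ dividing $\Delta(n)$ is at most $O_m(\log M'/\log Y)$, and a further CRT expansion of $B$ yields the near-trivial bound $\E_n B(n)^{2q}\leq\prod_{p>Y}(1+O(p^{-3/2}))^{O(1)}=1+o(1)$. Combining $A$ and $B$ via H\"older's inequality then gives $\E_n\sigma_m^q(n)\ll_{m,q}1$. The hypothesis $\sigma_m(0)=o(M')$ absorbs the degenerate case $\Delta(0)=0$.
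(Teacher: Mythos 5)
Your opening observation --- that $\Delta$ is a product of $\binom{m}{2}$ affine-linear forms, each with a single root modulo every $p>w(N)$ --- is exactly the right ingredient, but you do not use it where it is needed, and the patch you propose for the error term does not work. The tail factor $B$ is fine (indeed $B(n)=1+o(1)$ pointwise, since $\Delta(n)\leq(2N)^{\binom{m}{2}}$ has only $O_{m,\epsilon}(1)$ prime factors exceeding $Y=M'^{\epsilon}$). The gap is in the smooth factor $A$: restricting $d$ to have all its prime factors in $(w(N),Y]$ does \emph{not} cap the size of $d$. A squarefree $Y$-smooth $d$ dividing $\Delta(n)$ can still be as large as $\max_{n\leq M'}|\Delta(n)|\asymp N^{\binom{m}{2}}$, and such $d$ are plentiful (for $u=\binom{m}{2}/\epsilon$ fixed, a positive proportion of integers up to $N^{\binom{m}{2}}$ are $Y$-smooth and squarefree), so the boundary error you must control satisfies
\[
\frac{1}{M'}\sum_{\substack{d\leq N^{\binom{m}{2}}\\ p\mid d\,\Rightarrow\, w(N)<p\leq Y}}\Big(C_{m,q}\tbinom{m}{2}\Big)^{\omega(d)}d^{-1/2}\ \gg\ \frac{N^{\binom{m}{2}/2}}{M'}\ =\ N^{\binom{m}{2}/2-1+o(1)},
\]
which diverges for every $m\geq 3$. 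Rankin's trick does not rescue this, because the obstruction is the genuine abundance of large smooth divisors, not a crude counting loss.

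The missing idea is to exploit the linear factorisation \emph{before} expanding into a divisor sum. Since every prime dividing $\Delta(n)$ divides one of the factors $\ell_{jj'}(n):=\W n+b_{i_j}-b_{i_{j'}}$, one has
\[
\sum_{p\mid\Delta(n),\ p>w(N)}p^{-1/2}\ \leq\ \sum_{1\leq j<j'\leq m}\ \sum_{p\mid \ell_{jj'}(n),\ p>w(N)}p^{-1/2},
\]
hence $\sigma_m^q(n)\leq\prod_{j<j'}\exp\big(O_{m,q}\big(\sum_{p\mid\ell_{jj'}(n),\,p>w(N)}p^{-1/2}\big)\big)$, and H\"older (or the arithmetic--geometric mean inequality) over the $\binom{m}{2}$ factors reduces the problem to the moment of a single such factor attached to one linear form $\W n+c$. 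There your own computation closes: every divisor $d$ of $\W n+c$ with $1\leq n\leq M'$ automatically satisfies $d\leq 2N=M'^{1+o(1)}$, so the boundary error is $\frac{1}{M'}\sum_{d\leq 2N}C^{\omega(d)}d^{-1/2}\ll M'^{-1/2+o(1)}=o(1)$, while the main term is the same convergent Euler product $\prod_{p>w(N)}(1+O_{m,q}(p^{-3/2}))$. This reduction to the degree-one case is precisely the argument of Green and Tao that the paper cites without proof.
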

\emph{Proof of Proposition \ref{verification of C-Condition}.}
The proof proceeds in two cases.
The first case considers the situation where $h_i=h_j$ for two
distinct indices $i,j$.
We aim to use the fact that on the right hand side of the
inequality 
$$\E_{n \in \Z_{M'}} \prod_{i \in [m]} \nu'_{\W,(b_i)}(n+h_i) 
  \leq \sum_{1 \leq i < j \leq m} \sigma_{m_0}(h_i - h_j)$$
$\sigma_{m_0}(0)$ occurs while Proposition \ref{sigma_m moments} allows us
to choose $\sigma_{m_0}(0)$ quite large. 
Indeed, H\"older's inequality yields
\begin{align*}
& \E_{n \in \Z_{M'}} \prod_{i\in [m]} \nu'_{\W,(b_i)}(n+h_i) \\
&= C'^{-m}\sum_{s_1, \dots , s_m} 
   \sum_{i_1,\dots, i_m}
   \sum_{\substack{u_1,\dots, u_m\\ u_j \in U(i_j,s_j), j\in[m]}}
   \E_{n \in \Z_{M'}}
   \prod_{\ell \in [m]} 
   {\tilde \tau}'_{\gamma} (\W (n+h_i)+b_{i_{\ell}})
   2^{s_{\ell}} 
   1_{u_{\ell}|(\W (n+h_i)+b_{i_{\ell}})} \\
&\leq C'^{-m}
   \sum_{s_1, \dots , s_m} 
   \sum_{i_1,\dots, i_m}
   \sum_{\substack{u_1,\dots, u_m\\ u_j \in U(i_j,s_j), j\in[m]}}
   \prod_{\ell \in [m]} 
   \Big(\E_{n \in \Z_{M'}}
   {\tilde\tau}'_{\gamma}{}^{m} (\W n+b_{i_{\ell}})
   2^{m s_{\ell}} 
   1_{u_{\ell}|(\W n+b_{i_{\ell}})} \Big)^{1/m}~.
\end{align*}
Since $\tau(n) \ll_{\eps} n^{\eps}$, we may continue this estimate by
\begin{align*}
&\ll_{\eps} \exp(\eps m \log N)
   \sum_{s_1, \dots , s_m} 
   \sum_{i_1,\dots, i_m}
   \sum_{\substack{u_1,\dots, u_m\\ u_j \in U(i_j,s_j), j\in[m]}}
   \prod_{\ell \in [m]} 
   \Big(\E_{n \in \Z_{M'}}
   2^{m s_{\ell}} 
   1_{u_{\ell}|(\W n+b_{i_{\ell}})} \Big)^{1/m}\\
&\ll \exp(\eps m \log N)
   \sum_{s_1, \dots , s_m} 
   \sum_{i_1,\dots, i_m}
   \sum_{\substack{u_1,\dots, u_m\\ u_j \in U(i_j,s_j), j\in[m]}}
   \prod_{\ell \in [m]} 
   \Big( 2^{m s_{\ell}} \frac{1}{u_{\ell}} \Big)^{1/m}\\
&\leq \exp(\eps m \log N)
   \prod_{\ell \in [m]}
   \sum_{s_{\ell}} 
   \sum_{i_{\ell}}
   \sum_{u_{\ell} \in U(i_{\ell},s_{\ell})}
   \frac{2^{s_{\ell}}}{u_{\ell}^{1/m}}~. 
\end{align*}
Note that the proof of Proposition \ref{majorant} implies that
$$\sum_{s > 2/\gamma}
 \sum_{i \geq \log_2 s - 2}
 \sum_{u \in U(i,s)}
 \frac{2^{s}}{u^{1/m}}
\leq \sum_{s \geq 2/\gamma}  \frac{2^s}{s^{s\gamma/(100m)}} 
  \left( 
  \sum_{j \geq 1} \left(\frac{100 \cdot e \cdot (\log 2 + o(1))}{\gamma j}
  \right)^{\gamma j/(100m)} \right)^s
$$ 
converges. Thus,
$$ 
 \E_{n \in \Z_{M'}} \prod_{i\in [m]} \nu'_{\W,(b_i)}(n+h_i)
 \ll_{m, \eps} \exp(\eps m \log N )~.
$$
Recall that $M'=N/\W=N^{1-o(1)}$, and therefore $N^{1/4}=o(M')$.
Choosing $\eps > 0 $ small enough so that $\eps m_0 < 1/4$ and setting
$$
\sigma_{m_0} (0) 
 := O_{m_0,\eps} \Big(\exp(\eps m_0 \log N) \Big)
 = O_{m_0,\eps}(N^{1/4})
 = o(M')~,
$$
we can ensure that
$$
 \E_{n \in \Z_{M'}} \prod_{i \in [m]} \nu'_{\W,(b_i)}(n+h_i) 
 \leq \sum_{1 \leq i < j \leq m} \sigma_{m_0}(h_i - h_j)
$$
when $h_i=h_j$ for some $i\not=j$.

\vspace{1\baselineskip}
\noindent
Next, we consider the case where $h_i \not= h_j$ whenever $i\not=j$.
Our approach to estimate 
$$\E_{n \leq M'} \prod_{j\in[m]} \nu'\left(\W(n+h_j)+b_{i_j}\right)$$
is the same as the one used to check the linear forms condition and we
therefore proceed to analyse the local divisor densities:
Since the forms $\psi_j(n)= \W(n+h_j)+b_{i_j}$ are affinely related, all
we can say in general for $p>w(N)$ is 
$$\alpha(p^{a_1}, \dots, p^{a_m}) = O(p^{-\max_i a_i})~.$$
If, however, more than one exponent $a_i$ is non-zero, then we have 
$$ \alpha(p^{a_1}, \dots, p^{a_m}) > 0 $$
only if $p ~\big|\left(\W(h_j-h_{j'}) + b_{i_j}-b_{i_{j'}}\right)$ for
some $j,j'\in [m]$.

\begin{Claim}
We have the following estimate
\begin{align}\label{bound for correlation estimate}
\E_{n \leq M'} \prod_{j\in[m]} \nu'\left(\W(n+h_j)+b_{i_j}\right)
&\ll \prod_{\substack{p|\Delta\\p>w(N)}} \sum_{a_1,\dots, a_m} 
 \alpha(p^{a_1},\dots,p^{a_m})
\end{align}
where
$\Delta := \prod_{j\not=j'}(\W(h_j-h_{j'})+b_{i_j}-b_{i_{j'}})$.
\end{Claim}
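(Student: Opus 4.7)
The plan is to adapt the machinery developed for the linear forms estimate (Proposition \ref{linear-forms-prop}) to the present situation, where the forms $\psi_j(n) = \W(n+h_j) + b_{i_j}$ are all affinely related to each other. The main new feature is that we only need an upper bound, so we do not have to discard dependent divisibility events; instead we retain them and track how the local factor $\beta_p$ depends on whether $p$ divides the resonance quantity $\Delta$.

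First I would expand each $\nu'$ as in Section \ref{linear forms section}, giving a cross-sum indexed by tuples $(s_j,i_j,u_j)$ with $u_j \in U(i_j,s_j)$, and expand the inner $\tilde\tau'_\gamma(\psi_j(n))$ as a truncated divisor sum over $d_j \leq N^\gamma$ with $(d_j, u_j W) = 1$. Each resulting cross term is an expectation
$$\E_{n \leq M'} \prod_{j \in [m]} 1_{u_j d_j \mid \psi_j(n)},$$
to which the volume packing lemma (Lemma \ref{volume-packing}) gives the value $\alpha(u_1 d_1, \dots, u_m d_m)$ up to an error term that is absorbed exactly as in the proof of Proposition \ref{linear-forms-prop} (and is again harmless since all summations have at most $N^{O_D(\gamma)}$ terms, and the factors $u_j, d_j$ are at most $N^\gamma$).

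Next I would use the multiplicativity of $\alpha$ to factor the resulting weighted sum into an Euler product and bound it prime-by-prime. There are three cases. For primes $p \leq w(N)$, the coprimality constraints $(u_j W, d_j) = 1$ together with the fact that every prime divisor of any $u_j \in U(i_j,s_j)$ exceeds $N^{1/(\log\log N)^3} \gg w(N)$ force the contribution to equal $1$. For primes $p > w(N)$ with $p \nmid \Delta$, the forms $\psi_1,\dots,\psi_m$ are pairwise affinely independent modulo $p$, so the bound \eqref{alpha-bound} gives
$$\sum_{a_1,\dots,a_m \in \N} \alpha(p^{a_1},\dots,p^{a_m}) = 1 + O_m(p^{-2}),$$
by Lemma \ref{dependent_div_events}. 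The product of these factors over all $p > w(N)$ with $p \nmid \Delta$ converges absolutely to an $O_m(1)$ constant. For primes $p > w(N)$ with $p \mid \Delta$, we simply retain the full local sum $\sum_{a_1,\dots,a_m} \alpha(p^{a_1},\dots,p^{a_m})$ without attempting any further saving, as these are exactly the factors that appear on the right-hand side of the claim.

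Combining these three cases produces the asserted inequality
$$\E_{n \leq M'} \prod_{j \in [m]} \nu'(\psi_j(n)) \ll \prod_{\substack{p \mid \Delta \\ p > w(N)}} \sum_{a_1,\dots,a_m} \alpha(p^{a_1},\dots,p^{a_m}).$$
The main obstacle is essentially bookkeeping: one must verify that the arguments from Section \ref{linear forms section}, which were set up under the assumption that no two forms are affinely related, transfer to the present dependent case once we allow the bad prime divisors of $\Delta$ to be collected into a separate product. The only genuinely new input is the observation that, although the $\psi_j$ are all affinely related over $\Z$, the set of primes $p > w(N)$ at which this dependence is visible is exactly the prime divisors of $\Delta$ that exceed $w(N)$; everywhere else the local divisor density behaves as in the independent case, and the machinery of Section \ref{linear forms section} applies verbatim.
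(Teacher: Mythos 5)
Your overall skeleton (expand $\nu'$ and $\tilde\tau'_\gamma$, apply the volume packing lemma, factor the resulting local densities into an Euler product, and isolate the primes dividing $\Delta$) is the same as the paper's, but the central quantitative step is wrong. You assert that for $p>w(N)$ with $p\nmid\Delta$ the forms $\psi_j(n)=\W(n+h_j)+b_{i_j}$ are ``pairwise affinely independent modulo $p$'' and that Lemma \ref{dependent_div_events} then gives $\sum_{a_1,\dots,a_m}\alpha(p^{a_1},\dots,p^{a_m})=1+O_m(p^{-2})$, so that the product over these primes is $O_m(1)$. Neither part is correct. All of these forms have the same linear part $\W n$, so they are affinely related modulo \emph{every} prime; the bound \eqref{alpha-bound} and Lemma \ref{dependent_div_events} (which presuppose an unexceptional prime) do not apply. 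What is true when $p\nmid\Delta$ is only that the tuples with at least two nonzero exponents contribute zero, because $p\mid\psi_j(n)$ and $p\mid\psi_{j'}(n)$ would force $p\mid\W(h_j-h_{j'})+b_{i_j}-b_{i_{j'}}$. The tuples with exactly one nonzero exponent survive and contribute $\sum_{j}\sum_{a\geq1}p^{-a}=m/(p-1)$, so the local factor is $1+m/p+\cdots$, and the product over $w(N)<p<N^{\gamma}$ diverges like a power of $\gamma\log N$. Your argument as written therefore does not close: the right-hand side of the claim would have to carry an extra factor growing with $N$.

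The missing ingredient is the cancellation of this divergence against the normalising factors $\frac{W}{\phi(W)\gamma\log N}$ carried by each $\tilde\tau'_\gamma$, which you drop from the prime-by-prime analysis. This is exactly how the paper proceeds: it keeps the unconditioned local product in the crude form $\prod_{w(N)<q<N^{\gamma}}(1+q^{-1})^{t}$ and observes that $\bigl(\frac{W}{C'\phi(W)\gamma\log N}\bigr)^{t}\prod_{w(N)<q<N^{\gamma}}(1+q^{-1})^{t}\ll1$, leaving only the product over $p\mid\Delta$, $p>w(N)$. A secondary point you do not address: divisors $d$ counted by $\tilde\tau'_\gamma(\psi_j(n))$ may share prime factors with $u_j$, which after decomposing $d=v_jd_j$ with $v_j\mid u_j$ produces a factor $\tau(u_j)=2^{m_0(i_j,s_j)}$; one must then verify that $\sum_{s}\sum_{i}\sum_{u\in U(i,s)}2^{s}\tau(u)/u$ converges, which requires the $m_0!$ computation from the proof of Proposition \ref{majorant} and is not automatic.
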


Before we prove the claim, we complete the verification of the
correlation estimate.
In order to apply the bound on $\alpha$, note that there are at most
$mj^{m-1}$ tuples $(a_1,\dots a_m)$ satisfying $\max_i a_i = j$.
For sufficiently large primes $p$, we have 
$$ mj^{m-1} < p^{j/2}/2~.$$
If furthermore $p>w(N)$ holds, as on the right hand side of 
\eqref{bound for correlation estimate}, then
$$\sum_{a_1,\dots, a_m} \alpha(p^{a_1},\dots,p^{a_m}) 
\leq 1 + \frac{1}{2}p^{-1/2}\sum_{j\geq 0} p^{-j/2} 
\leq 1 + p^{-1/2}$$
and therefore
\begin{align*}
 \prod_{\substack{p|\Delta \\ p>w(N)}}
 \sum_{a_1,\dots, a_m} \alpha(p^{a_1},\dots,p^{a_m})
 \ll \prod_{\substack{p>w(N)\\p|\Delta}}
 \left(1 + p^{-1/2}\right)~.
\end{align*}
Let 
$\Delta(n):= \prod_{j\not=j'}\big(\W n+b_{i_j}-b_{i_{j'}}\big)$ 
and set
$$
 \sigma_{m_0}(n)
 := \exp \Bigg( \sum_{p>w(N),~p|\Delta(n)} O_{m_0}(p^{-1/2}) \Bigg) ~.
$$
for $n>0$.
Since $1 + x \leq \exp x$, we have
\begin{align*}
\E_{n \leq M'} \prod_{j\in[m]} \nu'\left(\W(n+h_j)+b_{i_j}\right)
&\ll_m \sum_{1 \leq j < j' \leq m} 
 \sigma_{m_0} \left(h_j-h_{j'}\right)~.
\end{align*}
In view of the above Proposition \ref{sigma_m moments}, this completes the
verification of the correlation condition. 

\vspace{1\baselineskip}
\noindent
\emph{Proof of Claim.}
We have to bound the expression
\begin{align*}
& \E_{n \leq M'} \prod_{j\in[m]} \nu' \left(\W(n+h_j)+b_{i_j}\right) \\
&= C'^{-t}\E_{n \leq M'} \prod_{j \in [m]} 
 \sum_{s_j}
 \sum_{i_j}
 \sum_{u_j \in U(i_j,s_j)} 2^{s_j} {\tilde \tau}'_{\gamma}
 (\W(n+h_j)+b_{i_j}) 1_{u_j|(\W(n+h_j)+b_{i_j})}~.
\end{align*}
Dropping the normalising factor 
$\left(\frac{W}{C'\phi(W)\gamma \log N}\right)^t$
for the moment, the above becomes
\begin{align*}
&\E_{n \leq M'}
 \sum_{s_1,\dots,s_m}
 \sum_{i_1,\dots,i_m}
 \sum_{u_1,\dots,u_m} 
 \sum_{v_1|u_1, \dots, v_m|u_m}
 \sum_{\substack{d_1,\dots,d_m\\ d_i \leq N^{\gamma}/u_i\\(d_i,W)=1\\
 i=1,\dots,m}} \prod_{j\in [m]} 2^{s_j} 1_{u_jd_j|(\W(n+h_j)+b_{i_j})} \\
&\leq\E_{n \leq M'}
 \sum_{s_1,\dots,s_m}
 \sum_{i_1,\dots,i_m}
 \sum_{u_1,\dots,u_m}
 \sum_{\substack{d_1,\dots,d_m\\ d_i \leq N^{\gamma}/u_i\\(d_i,W)=1\\
 i=1,\dots,m}} \prod_{j\in [m]} 2^{s_j} 
 \tau(u_k)
 1_{u_jd_j|(\W(n+h_j)+b_{i_j})}~.
\end{align*}
This may be bounded as follows by the volume packing lemma, employed
together with the observations on
$\alpha(p^{a_1}, \dots, p^{a_m})$ we made just before the statement of
this claim.
\begin{align*}
&\ll 
 \sum_{s_1,\dots,s_m}
 \sum_{i_1,\dots,i_m}
 \sum_{u_1,\dots,u_m}
 \sum_{\substack{d_1,\dots, d_m \leq N^{\gamma}\\(d_j,W\Delta)=1}}
 \prod_{k=1}^m \left( 2^{s_k}\frac{\tau(u_k)}{u_k} \right) \frac{1}{d_k}
 \prod_{\substack{p|\Delta\\p>w(N)}} \sum_{a_1, \dots,a_m}
 \alpha(p^{a_1}, \dots, p^{a_m})\\
&\ll 
 \sum_{s_1,\dots,s_m}
 \sum_{i_1,\dots,i_m}
 \sum_{u_1,\dots ,u_m}
 \prod_{k=1}^m \left( 2^{s_k}\frac{\tau(u_k)}{u_k} \right)
 \prod_{\substack{w(N) < q <N^{\gamma}\\q \text{ prime}}}\! (1+ q^{-1})^t 
 \prod_{\substack{p|\Delta\\p>w(N)}} \sum_{a_1, \dots,a_m}
 \alpha(p^{a_1}, \dots, p^{a_m}).
\end{align*}
Noting that 
$$\left(\frac{W}{C'\phi(W)\gamma \log N}\right)^t 
  \prod_{\substack{w(N) < q < N^{\gamma}\\q \text{ prime}}} (1+q^{-1})^t
\ll 1~,$$
all that remains is to bound 
$$ \prod_{k=1}^m
 \sum_{s_k}
 \sum_{i_k}
 \sum_{u_k}
 2^{s_k}\frac{\tau(u_k)}{u_k}~.
$$
This, however, can be done in a similar way as in the proof of Proposition
\ref{majorant}:
\begin{align*}
 \sum_{s > 2/\gamma}
 \sum_{i \geq \log_2 s - 2}
 \sum_{u \in U(i,s)}
 2^{s} \frac{\tau(u)}{u}
&= \sum_{s > 2/\gamma}
 \sum_{i \geq \log_2 s - 2}
 \sum_{u \in U(i,s)}
 2^{s}2^{m_0(i,s)}\frac{1}{u} \\
&\leq \sum_{s > 2/\gamma}
 \sum_{i \geq \log_2 s - 2}
 2^{s}\frac{1}{m_0(i,s)!} (2\log 2 +o(1))^{m_0(i,s)} \\
&\leq \sum_{s> 2/\gamma}  \frac{2^{ms}}{s^{s\gamma/100}} 
  \sum_{j \geq 1} 
  \left(\frac{100 \cdot e \cdot (2\log 2 + o(1))}{\gamma j}
  \right)^{\gamma sj/100}\\
&\ll 1~,
\end{align*}
which completes the proof of the claim.

\section{Application of the transference principle}
\label{application_section}
The aim of this section is to deduce the main theorem from a generalised
von Neumann theorem and to prove some reductions on the remaining task of
checking that the conditions of the generalised von Neumann theorem are
satisfied.

The transference principle \cite[Prop. 10.3]{green-tao-linearprimes}
allows, as was discussed in Section \ref{W-trick-section}, to transfer
results that hold for bounded Gowers-uniform functions to Gowers-uniform
functions that are dominated by a pseudorandom measure.
It was developed in
\cite[\S8]{green-tao-longprimeaps} in view of an application to the
(unbounded) von Mangoldt function,
and was proved by an iteration argument.
New and simplified approaches to the transference principle were more
recently found by Gowers \cite{gowers} and
Reingold-Tulsiani-Trevisan-Vadhan \cite{rttv}.

The generalised von Neumann theorem asserts that, if $f$ is suitably
Gowers-uniform and dominated by a pseudorandom measure, then composing $f$
with linear forms $\psi_i$ that are sufficiently independent yields
functions $f \circ \psi_i$ that behave like independent variables: 
the expectation   
$\E_{n} \prod_{i\in [t]} f(\psi_i(n))$
is close to $(\E_n f(n))^t$, which were the expected value, 
had the $f \circ \psi_i$ genuinely been independent.

\begin{proposition}[Green-Tao \cite{green-tao-linearprimes}, generalised
von Neumann theorem]\label{ v.neumann}
Let $t,d,L$ be positive integer parameters.
Then there are constants $C_1$ and $D$, depending on $t,d$ and $L$, such
that the following is true.
Let $C$, $C_1 \leq C \leq O_{t,d,L}(1)$ be arbitrary and
suppose that $N' \in [CN,2CN]$ is a prime. 
Let $\nu: \Z_{N'} \to \R^{+}$ be a $D$-pseudorandom measure, and suppose
that $f_1,\dots,f_t : [N] \to \R$ are functions with 
$|f_i(x)| \leq \nu(x)$ for all $i \in [t]$ and $x \in [N]$. 
Suppose that $\Psi=(\psi_1,\dots,\psi_t)$
is a finite complexity system of affine-linear forms whose
linear coefficients are bounded by $L$.
Let $K\subset [-N,N]^d$ be a convex body such that 
$\Psi(K) \subset [1,N]^t$.
Suppose also that 
\begin{equation}\label{uniformity-condition;v.N.Thm}
 \min_{1\leq j \leq t} \|f_j\|_{U^{t-1}[N]}\leq \delta
\end{equation}
for some $\delta > 0$. 
Then we have
$$\sum_{n \in K} \prod_{i \in [t]} f_i(\psi_i(n)) 
  = o_{\delta}(N^d)~.$$ 
\end{proposition}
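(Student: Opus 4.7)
The plan is to adapt the Cauchy--Schwarz iteration proof from \cite[\S 5]{green-tao-linearprimes}. First, I would embed the problem in the cyclic group $\Z_{N'}$ (which is possible since $N'$ is prime and $N' \geq CN$): extend each $f_i$ by zero outside $[N]$ and treat $1_K$ as an additional cutoff factor, so that it suffices to bound $\E_{n \in \Z_{N'}^d}\, 1_K(n) \prod_{i \in [t]} f_i(\psi_i(n))$.

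Next, assume without loss of generality that the form with the small $U^{t-1}$ norm is $f_t$. The key step is a sequence of $t-1$ Cauchy--Schwarz applications. The finite complexity hypothesis on $\Psi$ guarantees that for each $i \in [t-1]$ there is a linear change of coordinates and a variable on which $\psi_i$ depends but on which the remaining forms $\psi_{i+1}, \dots, \psi_t$ either do not depend or depend only through a linearly independent combination. Applying Cauchy--Schwarz in such a variable doubles the number of parameters, converts the factor containing $f_i$ into two $\nu$-dominated factors (bounded by $|f_i| \leq \nu$), and replaces each remaining $f_j$ by a product $f_j(\psi_j(n))\, f_j(\psi_j(n) + \text{shift})$. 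After $t-1$ such iterations the expression is bounded in modulus by a constant depending on $t,d,L$ times
\[
  \Big(\E_{n, h_1, \dots, h_{t-1}}\, W(n, h) \prod_{\omega \in \{0,1\}^{t-1}} f_t\big(\tilde\psi_t(n) + \omega \cdot \tilde h\big)\Big)^{1/2^{t-1}},
\]
where $W(n, h)$ is a product of evaluations of $\nu$ on a larger affine-linear system $\tilde \Psi$ whose coefficients are bounded by some $D = D(t, d, L)$.

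The third and final step is to remove the weight $W(n, h)$. Here the $D$-linear forms condition on $\nu$ ensures that $W(n, h)$ averages to $1 + o(1)$; a further Cauchy--Schwarz application reduces the weighted average to the unweighted Gowers inner product defining $\|f_t\|_{U^{t-1}}^{2^{t-1}}$, yielding a bound of the form $\|f_t\|_{U^{t-1}[N]}^{2^{t-1}} + o(1)$. The hypothesis $\|f_t\|_{U^{t-1}[N]} \leq \delta$ and taking $2^{t-1}$-th roots then gives the desired conclusion $\sum_{n \in K} \prod_{i \in [t]} f_i(\psi_i(n)) = o_{\delta}(N^d)$.

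The main obstacle is the bookkeeping in step two: one must verify that at every Cauchy--Schwarz step the auxiliary system $\tilde \Psi$ remains of finite complexity and has coefficient bound controlled by some $D = D(t, d, L)$, so that a single choice of $D$ in the definition of pseudorandomness covers every error term. This is where the finite complexity assumption on the original $\Psi$ is essential, and it is the combinatorial heart of the argument in \cite[\S 5]{green-tao-linearprimes}; I would invoke that argument directly rather than reproducing its details.
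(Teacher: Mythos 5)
The paper does not prove this proposition at all --- it is imported verbatim as a black box from \cite[App.~C]{green-tao-linearprimes} --- and your sketch is a faithful outline of exactly that cited argument: embedding in $\Z_{N'}$, $t-1$ Cauchy--Schwarz steps exploiting finite complexity, and weight removal via the $D$-linear forms condition. So the proposal is correct and takes the same route as the source the paper relies on; deferring the combinatorial bookkeeping of the auxiliary system $\tilde\Psi$ to \cite{green-tao-linearprimes} is consistent with what the paper itself does.
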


Establishing the Gowers-uniformity condition
\eqref{uniformity-condition;v.N.Thm} itself is a task that is
conceptually equivalent to that of finding an asymptotic for 
$\sum_{n \in K} \prod_{i\in [t]} f(\psi_i(n))$ directly, and should
therefore not be any easier.
The specific system of affine-linear forms that appears in the definition
of the uniformity norms, however, allows an alternative characterisation
of Gowers-uniform functions.

\stsubsection{A characterisation of Gowers-uniform functions}

Whether or not a function $f$ is Gowers-uniform is characterised by the
non-existence or existence of a polynomial nilsequence\footnote{For
definitions of nilmanifolds and nilsequences, see, for instance,
\cite{green-tao-polynomialorbits}.}
that correlates with $f$.
On the one hand, correlation with a nilsequence obstructs uniformity: 

\begin{proposition}[Green-Tao \cite{green-tao-linearprimes}, Cor. 11.6]
\label{nilsequences-obstruct-uniformity}
Let $s \geq 1$ be an integer and let $\delta \in (0,1)$ be real. 
Let $G/\Gamma = (G/\Gamma, d_{G/\Gamma})$ be an $s$-step nilmanifold with
some fixed smooth metric $d_{G/\Gamma}$ , and let $(F(g(n)\Gamma))_{n \in
\N}$ be a bounded $s$-step nilsequence with Lipschitz constant at most
$L$.
Let $f : [N] \to \R$ be a function that is bounded in the $L_1$-norm,
that is, assume $ \|f\|_{L_1} = \E_{n \in [N]} |f(n)| \leq 1$.
If furthermore
$$ \E_{n \in [N]} f(n) F(g(n)\Gamma) \geq \delta $$
then we have
$$ \|f\|_{U^{s+1}[N]} \gg_{s,\delta,L,G/\Gamma} 1~. $$
\end{proposition}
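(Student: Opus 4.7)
The plan is to proceed by duality. Define the dual Gowers norm by
\[ \|\psi\|_{(U^{s+1})^*[N]} := \sup_{\|\phi\|_{U^{s+1}[N]} \leq 1} \bigl|\E_{n \in [N]} \phi(n)\psi(n)\bigr|, \]
so that automatically
\[ \bigl|\E_{n \in [N]} f(n) \psi(n)\bigr| \leq \|f\|_{U^{s+1}[N]} \cdot \|\psi\|_{(U^{s+1})^*[N]}. \]
Applied with $\psi(n) := F(g(n)\Gamma)$ and combined with the hypotheses $\E_n f(n)F(g(n)\Gamma) \geq \delta$ and $\|f\|_{L_1} \leq 1$, the proposition reduces to the uniform bound
\[ \|F(g(\cdot)\Gamma)\|_{(U^{s+1})^*[N]} \ll_{s,L,G/\Gamma} 1, \]
independent of $N$ and of the choice of polynomial orbit $g$.

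To establish this dual-norm bound, I would show that an $s$-step Lipschitz nilsequence behaves like a bounded superposition of $U^{s+1}$-dual functions
\[ \mathcal{D}\chi(n) := \E_{h_1,\dots,h_{s+1} \in [N]} \prod_{\omega \neq 0} \bar\chi(n + \omega \cdot h), \]
each of which has dual norm $\ll 1$ whenever $\chi$ is bounded. The key nilpotent-structure input is that the Gowers parallelepiped derivative
\[ \prod_{\omega \in \{0,1\}^{s+1}} F\bigl(g(n + \omega \cdot h)\Gamma\bigr)^{(-1)^{|\omega|}} \]
``flattens'' for an $s$-step nilmanifold: since the $(s{+}1)$-th group in the lower central series is trivial, this product may be re-expressed as a nilsequence on a product nilmanifold that is effectively one step simpler. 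This suggests an induction on $s$. The base case $s = 1$ is classical Fourier analysis (a Lipschitz function of $\alpha n \bmod 1$ has bounded $(U^2)^*$ norm by Parseval on the torus), and the inductive step applies the Cauchy--Schwarz--Gowers inequality to reduce the $(U^{s+1})^*$-estimate for an $s$-step nilsequence to a $(U^s)^*$-estimate for a family of derived $(s-1)$-step nilsequences, to which the inductive hypothesis applies.

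A preliminary reduction handles non-equidistributed orbits by appealing to the quantitative Leibman factorisation theorem of Green and Tao from \cite{green-tao-polynomialorbits}: any polynomial sequence $g$ factors as $g(n) = \epsilon(n) g'(n) \gamma(n)$ with $\epsilon$ smooth, $\gamma$ rational-periodic, and $g'$ totally equidistributed on a subnilmanifold, and the Lipschitz costs of these factors are controlled in terms of $L$ and the Mal'cev-basis data of $G/\Gamma$. The main obstacle I anticipate is tracking the quantitative dependence through this factorisation and through the iterated Cauchy--Schwarz, so that the final bound on the dual norm depends only on the stated parameters $s$, $L$, and $G/\Gamma$ (with $\delta$ entering only through the final application of duality). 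A subsidiary technicality is the transition between $[N]$ and $\Z_{N'}$ for a nearby prime $N'$, required so that the Gowers norms behave well on a cyclic group; this is effected by the usual device of extending $f$ by zero.
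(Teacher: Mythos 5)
This proposition is not proved in the paper at all: it is quoted verbatim as Corollary~11.6 of \cite{green-tao-linearprimes} and used as a black box, so there is no internal argument to compare against. Your outline is, in substance, a reconstruction of the Green--Tao proof of that corollary: reduce by duality to showing that a bounded Lipschitz $s$-step nilsequence has bounded $(U^{s+1})^*$ norm, and establish this by showing that such a nilsequence is (approximately) a bounded superposition of dual functions $\mathcal{D}\chi$ of bounded functions, the key structural input being that a multiplicative derivative of an $s$-step nilsequence is an $(s-1)$-step nilsequence, which drives the induction on $s$. Two remarks on your write-up. First, you list $\|f\|_{L_1}\leq 1$ among the hypotheses feeding the duality step, but in a pure duality reduction it plays no role; it is needed precisely because the representation of $F(g(\cdot)\Gamma)$ by dual functions is only approximate in $L^\infty$, and the $\eps$-error term contributes $\eps\,\E_n|f(n)|$ to the correlation --- your phrase ``behaves like a bounded superposition'' papers over the exact place where this hypothesis is consumed. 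Second, invoking the quantitative factorisation theorem of \cite{green-tao-polynomialorbits} is heavier machinery than required: the stated bound is allowed to depend ineffectively on $G/\Gamma$, $s$, $\delta$ and $L$, so the soft compactness-type argument of \cite{green-tao-linearprimes} suffices and avoids the bookkeeping you identify as the main obstacle. As a proof your text is a plan rather than an argument --- the approximation by dual functions and the ``flattening'' of the parallelepiped product are asserted, and they are where all the work lies --- but the plan is the correct one.
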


An inverse result to this statement has been known as Inverse Conjecture
for the Gowers norms ($\mathrm{GI}(s)$ conjectures) for some time and has
recently been resolved, see \cite{gtz}. 
The inverse conjectures are stated for bounded functions.
With our application to the normalised divisor function in mind, we only
recall the transferred statement,
c.f.~\cite[Prop. 10.1]{green-tao-linearprimes}, here:

\begin{proposition}[Green-Tao-Ziegler, 
Relative inverse theorem for the Gowers norms]
\label{inverse theorem} 
For any $0 < \delta \leq 1$ and any $C\geq 20$, there exists a finite
collection $\mathcal M_{s,\delta,C}$ of $s$-step nilmanifolds $G/\Gamma$,
each equipped with a metric $d_{G/\Gamma}$, such that the following
holds. 
Given any $N\geq1$, suppose that $N'\in[CN,2CN]$ is prime, that 
$\nu:[N'] \to \R^+$ is an $(s+2)2^{s+1}$-pseudorandom measure, suppose
that $f:[N] \to \R$ is any arithmetic function with $|f(n)|\leq \nu(n)$
for all $n\in [n]$ and such that $$\|f\|_{U^{s+1}[N]} \geq \delta~.$$
Then there is a nilmanifold $G/\Gamma \in M_{s,\delta,C}$ in the
collection and a $1$-bounded $s$-step nilsequence 
$(F(g(n)\Gamma))_{n \in \N}$ on it that has Lipschitz constant 
$O_{s,\delta,C}(1)$, such that we have the correlation estimate
$$|\E_{n\in [N]} f(n)F(g(n)\Gamma)|\gg_{s,\delta,C} 1~.$$
\end{proposition}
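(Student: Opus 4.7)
The plan is to deduce this relative inverse theorem from the bounded inverse theorem of Green--Tao--Ziegler \cite{gtz} by means of a transference argument. The key intermediate step is a Koopman--von Neumann type decomposition which, using only the pseudorandomness of $\nu$, splits $f$ into a bounded part carrying essentially all of the Gowers norm and a negligible error in $U^{s+1}$.

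First I would invoke the decomposition theorem from \cite[Prop.~10.3]{green-tao-linearprimes} (or equivalently \cite[Prop.~8.1]{green-tao-longprimeaps}), applied at level $U^{s+1}$, to write $f = f_1 + f_2$ where $|f_1(n)| \leq 2$ pointwise (up to a negligible error) and $\|f_2\|_{U^{s+1}[N]} = o_{s,\delta,C}(1)$. The hypothesis that $\nu$ is $(s+2)2^{s+1}$-pseudorandom is precisely tuned so that this decomposition goes through at the desired Gowers norm level. By the triangle inequality for Gowers norms, we then get $\|f_1\|_{U^{s+1}[N]} \geq \delta - o_{s,\delta,C}(1) \geq \delta/2$ for $N$ large enough.

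Next I would apply the (bounded) inverse theorem of \cite{gtz} to the bounded function $f_1$, producing a nilmanifold $G/\Gamma$ from a finite collection depending only on $s, \delta, C$, and a $1$-bounded $s$-step nilsequence $(F(g(n)\Gamma))_{n\in\N}$ of Lipschitz constant $O_{s,\delta,C}(1)$, such that
$$|\E_{n \in [N]} f_1(n) F(g(n)\Gamma)| \gg_{s,\delta,C} 1.$$
This finite collection is what we take to be $\mathcal{M}_{s,\delta,C}$. To transfer this correlation from $f_1$ back to $f$, I would appeal to the direct direction of the inverse theorem, namely Proposition~\ref{nilsequences-obstruct-uniformity}: since $\|f_2\|_{L^1[N]} \leq \|f\|_{L^1[N]} + \|f_1\|_{L^1[N]} = O(1)$ (using $|f| \leq \nu$ and $\E\nu = 1+o(1)$) and $\|f_2\|_{U^{s+1}[N]} = o(1)$, the contrapositive of Proposition~\ref{nilsequences-obstruct-uniformity} forces $|\E_{n \in [N]} f_2(n) F(g(n)\Gamma)| = o_{s,\delta,C}(1)$. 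Writing $f = f_1 + f_2$ then yields $|\E_{n \in [N]} f(n) F(g(n)\Gamma)| \gg_{s,\delta,C} 1$, as desired.

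The main obstacle is the first step: producing a Koopman--von Neumann decomposition whose error is controlled in $U^{s+1}$ rather than in $L^2$. This is precisely where the full strength of the linear forms and correlation conditions on $\nu$ enters, and where the quantitative constant $(s+2)2^{s+1}$ in the pseudorandomness hypothesis is needed, since the higher the Gowers norm level the more independent linear configurations must be counted correctly by $\nu$. The remaining two steps are conceptually routine once the decomposition is in hand: the bounded inverse theorem is quoted as a black box from \cite{gtz}, and the transfer of correlation uses only the easy direction of the Gowers--nilsequence correspondence.
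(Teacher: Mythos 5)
Your proposal is correct and is precisely the argument the paper relies on: the paper does not prove this proposition itself but quotes it as the ``transferred statement'' \cite[Prop.~10.1]{green-tao-linearprimes} combined with the resolution of $\mathrm{GI}(s)$ in \cite{gtz}, and the proof of that quoted result consists of exactly your three steps (the Koopman--von Neumann decomposition of \cite[Prop.~10.3]{green-tao-linearprimes}, the bounded inverse theorem applied to $f_1$, and the easy direction, Proposition~\ref{nilsequences-obstruct-uniformity}, to dispose of $f_2$), which is also how the paper informally describes the transference principle in Section~\ref{majorant-section}. The only point deserving a little care, which your $o(1)$ notation glosses over, is the order of quantifiers: the threshold $\eps$ for $\|f_2\|_{U^{s+1}[N]}$ must be chosen small in terms of the correlation constant supplied by Proposition~\ref{nilsequences-obstruct-uniformity}, uniformly over the finite collection $\mathcal M_{s,\delta,C}$ and the bounded Lipschitz constants, which is legitimate precisely because that collection is finite.
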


This inverse theorem now reduces the required uniformity-norm estimate
\eqref{uniformity-condition;v.N.Thm} to the potentially easier task of
proving that the centralised version of $f$ does not correlate with
polynomial nilsequences.

\stsubsection{Reduction of the main theorem to a non-correlation
estimate}
The task of proving the main theorem had been reduced to the proof of the
following proposition in Section \ref{W-trick-section}.

\begin{proposition_w-reduction2}
Let $M=N/\W$, let $\Psi:\Z^d \to \Z^t$ be a finite complexity system whose
linear coefficients are bounded by $L$.
Then for any choice of $b_1,\dots, b_t \in [\W]$ such that 
$\varpi(b_i)| (\W/W)$ for all $i \in [t]$,
\begin{equation*}
\E_{n \in \Z^d \cap K'}
\prod_{i \in [t]} {\tilde \tau}'(\W \psi_i(n) + b_i) 
= 1 + o_{d,t,L}(M^d/\vol(K'))
\end{equation*}
holds for every convex body $K' \subseteq [-M,M]^d$ which satisfies 
$\W \Psi(K')+(b_1,\dots,b_t)\subseteq[1,N]^d$.
\end{proposition_w-reduction2}
Define for $b \in [\W]$ the function $\tilde\tau'_{\W,b}: \Z \to \R$,
$\tilde\tau'_{\W,b}(n):= {\tilde \tau}'(\W n + b)$. Rewriting
\begin{equation*}
 \E_{n \in \Z^d \cap K'} 
 \prod_{i \in [t]} \tilde\tau'_{\W,b_i}(\psi_i(n)) - 1
 = \E_{n \in \Z^d \cap K'} \prod_{i \in [t]} 
   \Big( \left(\tilde\tau'_{\W,b_i}(\psi_i(n)) - 1 \right) + 1 \Big) - 1
\end{equation*}
and multiplying out the product on the right hand side, the constant term
cancels out, while all other terms are of a form the generalised von
Neumann theorem can be applied to, provided we can show that 
$$\| \tilde\tau'_{\W,b_i} - 1\|_{U^{t-1}} = o(1)$$
for all $i \in [t]$. 
By the inverse theorem, it thus suffices to establish the non-correlation
estimates
$$ |\E_{n\in [M]} (\tilde\tau'_{\W,b}(n) - \E \tau'_{w,b}) F(g(n)\Gamma)| 
 = o(1) $$
for all $(t-2)$-step nilsequences $F(g(n)\Gamma)$ as in 
Proposition \ref{inverse theorem} and all $b \in [\W]$ with
$\varpi(b)|\W$.

\section{Non-correlation of the $W$-tricked divisor function with
nilsequences}
\label{non-correlation-section}
The aim of this section is to provide the remaining non-correlation
estimate which will complete the proof of the main theorem.
For all concepts and notation in connection with nilmanifolds and
nilsequences that remain undefined in this section we refer to
\cite{green-tao-nilmobius} and its companion
paper \cite{green-tao-polynomialorbits}.

Let $k \geq 1$ be an arbitrary integer, let $F: G/\Gamma \to \C$ be a
Lipschitz function on the $(k-1)$-step nilmanifold $G/\Gamma$, and let 
$g: \Z \to G$ be a polynomial sequence adapted to some given
filtration $G_{\bullet}$ of $G$.

Let $b \in [\W]$ such that $\varpi(b)|\W$, and note that then
$\varpi(\W n+b)=\varpi(b)$.
The mean value of ${\tilde \tau}'_{\W,b}$ satisfies the following
identity
\begin{align*}
 \E_{n \leq M} {\tilde \tau}'_{\W,b}(n) 
= \frac{W}{\phi(W)\log N}  
  \sum_{\substack{d \leq N/\varpi(b)\\(W,d)=1 }} d^{-1} + o(1) 
= 1 + o(1)~.
\end{align*}
Indeed, employing the estimate
$(1-\log^{-1}N)^{\log \log N} 
= \exp(\frac{O(1)\log \log N}{\log N})
= 1 + o(1)$, we have in one direction
\begin{align*}
&\sum_{\substack{d \leq N/\varpi(b)\\(W,d)=1 }} d^{-1}
\prod_{p\leq w(N)} (1-p^{-1})^{-1} \\
&= (1+o(1))
   \sum_{\substack{d \leq N/\varpi(b)\\(W,d)=1 }} d^{-1}
   \prod_{p\leq w(N)} 
   \frac{1-p^{-\lfloor \log_p \log N \rfloor}}{1-p^{-1}} \\
&\leq (1+o(1)) \sum_{\substack{d \leq N(\log N)^{\log \log N}}}
  d^{-1} \\
&\leq (1 + (\log \log N)^2 / \log N)\log N + O(1) 
\leq (1+o(1))\log N~,
\end{align*}
and, in the other direction,
\begin{align*}
\sum_{\substack{d \leq N/\varpi(b)\\(W,d)=1 }} d^{-1}
\prod_{p\leq w(N)} (1-p^{-1})^{-1} 
\geq \sum_{\substack{d \leq N/\varpi(b)}} d^{-1}
= \log (N/\varpi(b)) + O(1)
=  (1 + o(1)) \log N.
\end{align*}
Setting 
\begin{align*}
\mu_{\W,b} 
 &:= \frac{W}{\phi(W)\log N}
    \sum_{\substack{d \leq (N/\varpi(b))^{1/2}\\(W,d)=1 }} 
    2(d^{-1} - \varpi(b)d/N)~, 
\end{align*}
we obtain
\begin{align*}
\mu_{\W,b} 
 &= \frac{2W}{\phi(W)\log N} \E_{n \leq N/\W} 
    \sum_{d:(d,W)=1} 1_{d|(\W n + b)}1_{d^2 < (\W n + b)/\varpi(b)}
    + o(1) \\
 &= \E_{n \leq M} {\tilde \tau}'_{\W,b}(n) = \mu_{\W,b} +o(1) = 1 + o(1)~.
\end{align*}
Hence, the application of the Gowers Inverse Theorem requires the
estimation 
\begin{align*}
&\E_{n \leq M} ({\tilde \tau}'_{\W,b}(n) - \mu_{\W,b}) F(g(n)\Gamma) \\
&= 2 \E_{n \leq M} 
  \sum_{\substack{d\leq (N/\varpi(b))^{1/2}\\ (d,W)=1}} 
  (1_{d|\W n+b}1_{\W n+b>d^2 \varpi(b)} - d^{-1}(1 - \varpi(b)d^2/N))
  F(g(n)\Gamma) \\
&= o_{F,G/\Gamma}(1)~.
\end{align*}

To achieve this, we shall employ the strategy and various lemmata from 
\cite{green-tao-nilmobius}. Some parts of the argument will be generalised
to meet our requirements.

The basic strategy is as follows.
When trying to establish a non-correlation estimate, it is desirable to
have good control on the nilsequence involved. 
This is for instance the case when the nilsequence is totally
equidistributed, that is, equidistributed in every sufficiently dense
subprogression of the range it is defined on.
While a nilsequence in general does not have this property, the
factorisation theorem from \cite{green-tao-polynomialorbits} states that
any nilsequence $g:[N] \to G$ may be written as a product 
$g(n)=\eps(n)g'(n)\gamma(n)$, where $\eps: [N] \to G$ is smooth, 
$g': [N] \to G'$ takes values in a rational subgroup $G' \leq G$ and
yields a totally equidistributed sequence on the corresponding
submanifold $G'/(G'\cap \Gamma)$ of $G/\Gamma$, and where $\gamma: [N] \to
G$ has the property that $n \mapsto \gamma(n)\Gamma$ is periodic.
 
The aim then is to show that, by passing to a collection of subsequences
defined on subprogressions of $[N]$, the correlation estimate involving
$g$ can be reduced to correlation estimates involving totally
equidistributed sequences arising from $g'$.

One further reduction is possible: Any periodic function of short period
can be regarded as a nilsequence.
Establishing non-correlation in the special case of periodic sequences is
likely to be much easier than the general case.
If we pass from $\E_{n \leq N} f(n)F(g(n)\Gamma)$ to considering the
collection $\frac{1}{N}\sum_{n\leq (N-i)/d} f(dn+i)F(g(dn+i)\Gamma)$ for 
$0\leq i < d$, where each sequence $g(dn+i)\Gamma$ takes values in some
subnilmanifold $G_i/\Gamma_i$ of $G/\Gamma$, then a non-correlation
estimate with periodic sequences allows us to assume that the mean values
$\int_{G_i/\Gamma_i}F(x)\, dx $ vanish.
Indeed, we may subtract off the periodic correlation
$$ \E_{n\leq N} f(n) 
 \left(\sum_{i=0}^{d-1} 1_{n\equiv i(d)} \int_{G_i/\Gamma_i} F(x)\right)
 = o(1)~,$$
that is, we may subtract off the relevant mean values.

This sketch shows the rough strategy from \S2 of
\cite{green-tao-nilmobius} for reducing a non-correlation
estimate to the case where the nilsequence is equidistributed and 
furthermore the involved Lipschitz function $F$ has zero mean.

The following is \cite[Thm. 1.1]{green-tao-nilmobius} adapted to our 
case. 

\begin{theorem}
Let $G/\Gamma$ be a nilmanifold of some dimension $m \geq 1$, let
$G_{\bullet}$ be a filtration of $G$ of some degree $d \geq 1$, and let
$g \in poly(\Z,G_{\bullet})$ be a polynomial sequence. Suppose that
$G/\Gamma$ has a $Q$-rational Mal'cev basis $\mathcal X$
for some $Q \geq 2$, defining a metric $d_{\mathcal X}$ on $G/\Gamma$.
Suppose that $F: G/\Gamma \to [-1,1]$ is a Lipschitz function. 
Recall that $M=N/\W$ and that the normalising factor of 
${\tilde \tau}'_{\W,b}$ depends on $N$. We have
$$|\E_{n \in [M]} {\tilde \tau}'_{\W,b}(n)F(g(n))\Gamma|
\ll_{m,d,\gamma,A} 
 Q^{O_{m,d,\gamma,A}(1)} 
 (1+ \|F\|)(\log\log\log N)^{-A}$$
for any $A>0$ and $N \geq 2$. 
\end{theorem}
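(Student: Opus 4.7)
My plan is to adapt the Type~I analysis used in Green and Tao's proof of M\"obius-nilsequence non-correlation \cite{green-tao-nilmobius}, exploiting the natural Type~I structure of the divisor function. By Dirichlet's hyperbola identity (precisely as in the computation of $\mu_{\W,b}$ earlier in this section) the correlation may be rewritten as
\[
 \E_{n\leq M}\tilde\tau'_{\W,b}(n)F(g(n)\Gamma)
 = \frac{2W}{\phi(W)\log N}
   \sum_{\substack{(q,W)=1\\q\leq\sqrt{N/\varpi(b)}}}
   \E_{n\leq M} 1_{q\mid\W n+b} 1_{\W n+b\geq q^2\varpi(b)} F(g(n)\Gamma) + o(1),
\]
thereby reducing the problem to the uniform-in-$q$ estimation of correlations of $F\circ g$ along the arithmetic progressions $\{n:q\mid\W n+b\}$, followed by the subtraction of the corresponding periodic correlations.

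For fixed $q$ coprime to $W$, the condition $q\mid\W n+b$ confines $n$ to a single residue class $r\pmod{q}$; writing $n=qn'+r$ and applying the quantitative factorization theorem of \cite{green-tao-polynomialorbits} to the polynomial sequence $n'\mapsto g(qn'+r)$ produces a decomposition $g(qn'+r)=\epsilon(n')g'(n')\gamma(n')$, where $g'$ takes values in a rational subgroup $G'\leq G$ and defines a totally equidistributed orbit on $G'/(G'\cap\Gamma)$, $\epsilon$ is Lipschitz-close to a constant, and $n'\mapsto\gamma(n')\Gamma$ is periodic of period at most $Q^{O(1)}$. Subdividing $[1,M/q]$ into subprogressions on which $\epsilon$ is essentially constant and $\gamma$ is exactly constant reduces the inner sum to a correlation of a modified Lipschitz function $F'$ with a totally equidistributed nilsequence on $G'/(G'\cap\Gamma)$. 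After subtracting the mean $\int F'$ on the relevant subnilmanifold, which globally corresponds to subtracting off a periodic correlation as in \cite[\S2]{green-tao-nilmobius}, the quantitative equidistribution criterion supplies cancellation at rate $Q^{O(1)}(M/q)^{-c(m,k)}(1+\|F\|)$.

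The main obstacle is uniformity in the modulus $q$: the per-$q$ cancellation rate $(M/q)^{-c}$ degrades sharply as $q\to\sqrt{M}$, while the hyperbola sum extends up to $q\sim\sqrt{N}$, and a trivial bound on the large-$q$ tail fails because $\sum_{Q_0<q\leq\sqrt{N}}1/q$ is not negligible unless $Q_0$ is already almost $\sqrt{N}$. The technically most delicate step is therefore to push the equidistribution analysis through for $q$ all the way up to $\sqrt{N}$, using the fact that the parameters produced by the factorization theorem can be arranged to depend only weakly on the residue class $r$ and the progression step $q$, so that an appropriate dyadic split at $q\leq Q_0:=\exp\!\bigl((\log\log N)^{c_0}\bigr)$ yields small equidistribution errors for $q\leq Q_0$, and for $q>Q_0$ one exploits the joint equidistribution of the shifted nilsequences $g(qn'+r)$ viewed as a polynomial sequence on a product nilmanifold. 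Calibrating $Q_0$ in terms of $A$ against the $W$-trick scale $\log\W\asymp\log\log N$, and observing that the ultimate factorization error scale is $\log\log\W\asymp\log\log\log N$, then produces the stated bound $Q^{O(1)}(1+\|F\|)(\log\log\log N)^{-A}$.
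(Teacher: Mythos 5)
Your skeleton is the right one and matches the paper's in outline: open $\tilde\tau'_{\W,b}$ by the hyperbola identity into a Type~I sum over moduli $q\leq (N/\varpi(b))^{1/2}$, use the factorisation theorem of \cite{green-tao-polynomialorbits} together with a major-arc (periodic) estimate to reduce to the case $\int F=0$ with $(g(n)\Gamma)$ totally equidistributed, and then seek cancellation along each progression $q\mid \W n+b$. But the central difficulty --- which you correctly identify and then do not resolve --- is the sum over moduli all the way up to $\sqrt N$. Your proposed fixes do not work as stated. Applying the factorisation theorem separately to each sequence $n'\mapsto g(qn'+r)$ produces a subgroup $G'$, a smooth part and a periodic part whose parameters cannot be ``arranged to depend only weakly on $q$''; there is no such uniformity in the theorem, and even granting it, a per-modulus saving of $(M/q)^{-c}$ summed against $\sum_q 1/q$ up to $\sqrt N$ diverges, exactly as you observe. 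The appeal to ``joint equidistribution of the shifted nilsequences on a product nilmanifold'' for $q>Q_0$ is not an argument: it is precisely the step that needs a proof, and the calibration of $Q_0$ against $\log\W$ does not produce one. (Also, the $(\log\log\log N)^{-A}$ in the statement does not come from a factorisation error scale $\log\log\W$; it comes from the fact that the bound is trivial unless $Q\ll(\log\log\log N)^{O_A(1)}$, combined with a genuine loss of $W/\phi(W)\asymp\log w(N)\asymp\log\log\log N$ in the equidistributed case.)

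The missing idea is the Type~I lemma (Lemma \ref{typeIsums} in the paper, following \cite[\S 3]{green-tao-nilmobius}): the factorisation theorem is applied \emph{once}, to $g$ itself, and one then proves that for each dyadic block $(K,2K]$ with $K\leq N^{1/2}$, only $o(\delta^{O(c_1)})K$ moduli $k$ can have $\bigl|k^{-1}\E_n 1_{k\mid \W n+b}1_P(\W n+b)F(g(n)\Gamma)\bigr|\gg\delta^{O(c_1)}$; the exceptional $k$ are absorbed by the trivial bound $O(1/k)$, and the dyadic summation then costs only the factor $W/\phi(W)$. The proof of this lemma is by contradiction: if many $k$ in a block were bad, then by \cite[Thm 2.9]{green-tao-polynomialorbits} each bad $k$ yields a horizontal character $\psi_k$ with $\|\psi_k\circ g_k\|_{C^\infty}$ small; after pigeonholing to a single $\psi$, Waring's theorem forces the top coefficient $\beta_d$ of $\psi\circ g$ to be nearly rational, and --- this is the part that is new relative to the M\"obius case --- an inductive argument removes the $k$-dependent offsets $u_k$ from the lower-order coefficients one degree at a time, concluding that every $\beta_j$ is nearly rational and hence that $g$ was not totally $\delta$-equidistributed, a contradiction. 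Without this counting-of-bad-moduli mechanism (or an equivalent substitute), the uniformity-in-$q$ obstruction you flag remains open, so the proposal as written has a genuine gap at its most critical step.
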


\begin{proof}[Sketch proof:] 
Since $\E_{n\in[M]} |{\tilde \tau}'_{\W,b}(n)| = O(1)$, the theorem
trivially holds unless 
$Q \ll (\log \log \log N)^{O_{A,m,d}(1)} \ll w(N)$,
allowing us to assume
$Q^B \ll_B w(N)$, for some $B>1$ to be chosen later.

Proceeding as in \S 2 of \cite{green-tao-nilmobius}, one may reduce to
analysing the case where $\int F = 0$ and where $(g(n)\Gamma)$ is totally
$Q'^{-B}$-equidistributed for some $Q'$ with 
$Q \leq Q' \ll Q^{O_{B,m,d}(1)}$.
The necessary major arc estimate that allows us to assume $\int F = 0$ is
the following.
For any progression $P \subseteq [M]$ of common difference
$1 \leq q < w(N)$ we have
\begin{align*}
&\E_{n\in N} 1_P(n) ({\tilde \tau}'_{\W,b}(n) - \mu_{\W,b})\\
&= \frac{2W}{\phi(W)\log N}~ \E_{n \in N}~ 1_P(n)
   \sum_{\substack{d \leq (N/\varpi(b))^{1/2} \\ (d,W)=1}}
   \Big(1_{d|\W n+b}1_{\W n+b>d^2\varpi(b)} 
   - d^{-1}\Big(1 - \frac{d^2\varpi(b)}{N}\Big)\Big) \\
&\ll N^{-1/2}~.
\end{align*}
Note that this bound critically depends on the fact that all prime
divisors of $q$ are smaller than $w(N)$, which is ensured by the
assumption $1 \leq q < w(N)$.

The case where $(g(n)\Gamma)$ is totally $Q'^{-B}$-equidistributed and
$\int F = 0$ is a consequence of the next proposition
(cf.~also \cite[Proposition 2.1]{green-tao-nilmobius}), applied with
$\delta = Q'^{-B}$, provided $B$ was chosen large enough. 
\end{proof}

\begin{proposition}[${\tilde \tau}'_{\W,b}$ is orthogonal to
equidistributed nilsequences]\label{non-corr_tildetau'2}
Suppose that $G/\Gamma$ has a $Q$-rational Mal'cev basis $\mathcal X$
adapted to the filtration $G_{\bullet}$.
Suppose $g \in \mathrm{poly}(\Z,G_{\bullet})$ and that the finite
sequence $(g(n)\Gamma)_{n \leq M}$ is a totally
$\delta$-equidistributed in $G/\Gamma$. 
Then for any Lipschitz function $F:G/\Gamma \to [-1,1]$ with
$\int_{G/\Gamma} F =0$ and
for any progression $P\subset[M]$ of length at least $M/Q$, we have
$$|\E_{n\in[M]} ({\tilde \tau}'_{\W,b}(n) 1_{P}(n)F(g(n)\Gamma)| \ll
\delta^{c} Q^{O(1)} \|F\| \log \log \log N$$
for some $c=1/O_{d,m}(1)$.
\end{proposition}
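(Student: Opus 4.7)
My plan is to use Dirichlet's hyperbola identity to express $\tilde\tau'_{\W,b}(n)-\mu_{\W,b}$ as a normalised sum of local fluctuations indexed by divisors $d$, and then to exploit the total $\delta$-equidistribution of $g$ — directly on short arithmetic progressions for small $d$, and via a bilinear Cauchy--Schwarz step for large $d$ — to produce the claimed cancellation.

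\emph{Hyperbola expansion.} Since $\varpi(b)\mid\W$, the integer $m(n) := (\W n+b)/\varpi(b)$ is coprime to $W$, so the divisors of $\W n+b$ coprime to $W$ are precisely the divisors of $m(n)$. Writing $\tau(m) = 2\#\{d\mid m: d\le\sqrt m\} - 1_{m=\square}$ and recognising the expected density over $[M]$ (which matches $\mu_{\W,b}$), with $X := (N/\varpi(b))^{1/2}$ one obtains
\begin{equation*}
\tilde\tau'_{\W,b}(n) - \mu_{\W,b} = \frac{2W}{\phi(W)\log N}\sum_{\substack{d\le X\\(d,W)=1}}\Big[1_{d\mid\W n+b}1_{\W n+b>d^2\varpi(b)} - d^{-1}(1-d^2\varpi(b)/N)\Big] + O\big(\tfrac{1}{\log N}\big).
\end{equation*}
Swapping the order of summation reduces the required bound to an estimate on $\sum_d \Delta_d$, where $\Delta_d$ compares the $F(g(n)\Gamma)$-average over the AP $\{n\in P_d: \W n\equiv -b\Mod{d}\}$ to $1/d$ times the interval-average over $P_d := P\cap[n_1(d),M]$, with $n_1(d)=(d^2\varpi(b)-b)/\W$.

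\emph{Small divisors.} Split at $D_0 := (Q/\delta)^{c_0}$ for a small $c_0>0$. For $d\le D_0$ the AP inside $P_d$ has length $\gg M/(QD_0) \ge \delta M$, and $|P_d|\ge \delta M$. The total $\delta$-equidistribution of $g$ on sub-progressions of length $\ge \delta M$ then gives that both averages differ from $\int_{G/\Gamma}F = 0$ by $O(\delta^{c_1}Q^{O(1)}\|F\|)$, hence $|\Delta_d|\ll \delta^{c_1}Q^{O(1)}\|F\|/d$; the small-$d$ total contribution is $\ll \delta^{c_1}Q^{O(1)}\|F\|\log(Q/\delta)$.

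\emph{Large divisors.} This is the main obstacle: for $D_0<d\le X$ the AP is shorter than $\delta M$ and total equidistribution no longer applies directly. Following the Type II strategy of \S 2 of \cite{green-tao-nilmobius}, I would apply Cauchy--Schwarz in $d$ to square the large-$d$ sum out into shifted correlations
\[
\sum_h \tau_{(D_0,X]}(h)\sum_n F(g(n)\Gamma)F(g(n+h)\Gamma),\qquad \tau_{(D_0,X]}(h):=\#\{D_0<d\le X: d\mid h\}.
\]
Each inner correlation is controlled by the equidistribution of the product polynomial sequence $n\mapsto(g(n),g(n+h))$ on $(G\times G)/(\Gamma\times\Gamma)$ with filtration $G_\bullet\times G_\bullet$, which is inherited from the total $\delta$-equidistribution of $g$ at scale $\delta^{O(1)}Q^{O(1)}$ for $h$ outside a small exceptional set. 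Summing against the weight $\tau_{(D_0,X]}$ (of $L^1$ mass $\ll X\log X$) delivers the large-$d$ estimate, and combining with the normalisation $\tfrac{2W}{\phi(W)\log N}\ll \log\log\log N/\log N$ yields the bound $\delta^c Q^{O(1)}\|F\|\log\log\log N$. The hardest step is this bilinear argument: one must handle the weight $\tau_{(D_0,X]}$ and the exceptional set of shifts $h$ carefully enough to avoid a $\log N$ loss that would fail to be absorbed by the outer $\log\log\log N$ factor.
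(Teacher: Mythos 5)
Your hyperbola expansion and the treatment of small divisors match the paper's starting point, but the large-divisor step --- which you correctly identify as the crux --- does not go through as described, and it is also not how the paper proceeds. Two concrete problems. First, the Cauchy--Schwarz you propose does not produce the bilinear form you write down: parametrising the divisibility event as $\W n+b=\varpi(b)dm$ and squaring out in the long variable $m$ yields correlations of the form $\sum_m F(g(\phi(dm))\Gamma)\overline{F(g(\phi(d'm))\Gamma)}$, whose two arguments are related by \emph{dilation} ($dm$ versus $d'm$), not by a translation $n\mapsto n+h$; the weight $\tau_{(D_0,X]}(h)$ and the shifted correlations $\sum_n F(g(n)\Gamma)F(g(n+h)\Gamma)$ do not arise. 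Second, and more fundamentally, the assertion that equidistribution of the product sequence $n\mapsto(g(n),g(n+h))$ on $(G\times G)/(\Gamma\times\Gamma)$ is ``inherited'' from total $\delta$-equidistribution of $g$ is false: already for $g(n)=\alpha n$ on $\R/\Z$ with $\alpha$ diophantine, the pair $(\alpha n,\alpha n+\alpha h)$ is confined to a coset of the diagonal subtorus. Handling this diagonal obstruction is precisely the content of the genuinely difficult Type II analysis in the M\"obius paper, and it cannot be invoked as a black box here.

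The point you are missing is that the divisor function needs \emph{no} Type II information at all: after the hyperbola truncation at $d\le(N/\varpi(b))^{1/2}$, every term is a Type I sum, but one must push the Type I analysis all the way up to moduli of size $N^{1/2}$. The paper does this in Lemma \ref{typeIsums}, which is a \emph{counting} version of the Type I estimate: for each dyadic block $(K,2K]$ with $K\le N^{1/2}$, all but $O(\delta^{O(c_1)}K)$ moduli $k$ give a normalised average of $F(g(\cdot)\Gamma)$ along the short progression $\{n:\,k\mid \W n+b\}$ of size $O(\delta^{O(c_1)})$. This is proved by contradiction: many bad $k$ force, via the quantitative Leibman theorem, horizontal characters $\psi_k$ with small smoothness norms; pigeonholing produces a single character $\psi$ working for many $k$; Waring's theorem converts the smallness of $\|\cdot\,\beta_d k^d\|_{\R/\Z}$ for many $k$ into $\beta_d$ being close to a rational with small denominator; and a new inductive step strips the offsets $u_k$ from the lower-order coefficients so that the same argument applies to $\beta_{d-1},\beta_{d-2},\dots$, ultimately contradicting the total $\delta$-equidistribution of $g$ itself. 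The exceptional moduli in each block are then absorbed by the trivial bound, and the $\log N$ coming from the sum $\sum_{d\le N^{1/2}}d^{-1}$ and the dyadic decomposition in $n$ is cancelled by the $1/\log N$ in the normalisation of $\tilde\tau'_{\W,b}$, leaving only the factor $W/\phi(W)\ll\log\log\log N$. Without some substitute for this extended Type I lemma, your large-divisor argument has a genuine gap.
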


For the proof of this proposition we employ tools from the analysis of
\emph{Type I} sums in the proof of
\cite[Proposition 2.1]{green-tao-nilmobius}.
The main ingredient is the following lemma which generalises the
aforementioned \emph{Type I} sums analysis.
Since large parts of the highly technical proof remain virtually
unchanged, we chose to only outline the argument to that extent which
enables us to describe the parts new to it.
In the first part of the proof, we follow the presentation of 
\cite[\S 3]{green-tao-nilmobius} closely.

\begin{lemma}\label{typeIsums}
Suppose that $(g(n)\Gamma)_{n \leq M}$ is a
totally $\delta$-equidistributed sequence, suppose that
$F:G/\Gamma \to [-1,1]$ is a Lipschitz function with Lipschitz constant
$\|F\|_{\mathrm{Lip}}=1$.
Suppose further that $\delta > N^{-\sigma}$ for some $\sigma \in (0,1)$,
and that $Q \leq \delta^{-c_1}$ for some parameter $c_1 \in (0,1)$.
Let $P \subseteq [M]$ be a progression of length at least $M/Q$.
Then, provided that $\sigma$ and $c_1$ are sufficiently small, depending
only on the degree of $g$ and the dimension of the nilmanifold $G/\Gamma$,
the following holds.
For any $1 \leq K \leq N^{1/2}$ there are only $o(\delta^{O(c_1)})K$
values of $k$ satisfying $k \in (K,2K]$ and
$$ \left|k^{-1} 
  \E_{N/\W < n < 2N/\W} ~
  1_{k|\W n + b} 
  1_P(\W n + b)
  F(g(n)\Gamma)\right| 
\gg \delta^{O(c_1)}~.$$
\end{lemma}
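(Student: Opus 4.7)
Plan. The argument follows the Type I sum analysis of \cite[\S 3]{green-tao-nilmobius}, adapted to the affine substitution $n \mapsto \W n + b$ and to working on a subprogression $P \subseteq [M]$.

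First, I would observe that $k$ with $(k,\W) \nmid b$ cannot be bad, since then $1_{k \mid \W n + b} \equiv 0$. For the remaining $k$, set $k' := k/(k,\W)$, so that $k \mid \W n+b$ is equivalent to $n \equiv r_k \pmod{k'}$ for a unique residue $r_k$. Under the substitution $n = r_k + k'm$, the bad condition reads
$$
\Big| \sum_{m \in I_k} 1_{P'_k}(m)\, F\big(h_k(m)\Gamma\big) \Big|
\;\gg\; k M \delta^{O(c_1)},
$$
where $I_k \subset \Z$ has length $\asymp M/k'$, $P'_k$ is the pre-image of $P$, and $h_k(m) := g(r_k + k'm)$ is again a polynomial sequence adapted to $G_\bullet$. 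Since the left side is trivially at most $|I_k| \asymp M/k'$, the bad condition forces $k k' \ll \delta^{-O(c_1)}$; in particular I may assume $K \leq k \leq \delta^{-O(c_1)}$ and $k' \leq 2K \leq \delta^{-O(c_1)}$, so the statement is only nontrivial in this regime.

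The second step is to convert the total $\delta$-equidistribution of $g$ on $[M]$, together with the correlation above, into rigidity information on the Taylor data of $g$. By the quantitative Leibman equidistribution theorem of \cite{green-tao-polynomialorbits}, for each bad $k$ the sequence $h_k$ fails to be $\delta^{O(c_1)}$-equidistributed on $I_k$, and this produces a non-trivial horizontal character $\eta_k$ on $G/\Gamma$ of height $\ll \delta^{-O(c_1)}$ such that $\eta_k \circ h_k$ is, modulo $1$, a polynomial in $m$ with small smooth norm. The cut-off $1_{P'_k}$ is absorbed either by splitting $P'_k$ against a Lipschitz bump of controlled Lipschitz constant or by a standard Fourier expansion, at the cost of at most a polylogarithmic factor. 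Using the affine substitution $m \mapsto r_k + k'm$ then expresses the Taylor coefficients of $\eta_k \circ h_k$ as combinations of the Taylor coefficients $\alpha_j$ of $\eta_k \circ g$ scaled by integer powers $(k')^j$.

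The heart of the proof is a double pigeonhole. There are only $\ll \delta^{-O(c_1)}$ candidate horizontal characters of height $\ll \delta^{-O(c_1)}$, and for each such $\eta$ only $\ll \delta^{-O(c_1)}$ admissible pairs (denominator, numerator) that can witness the near-rationality. Hence, if the bad set had size $\gg \delta^{O(c_1)} K$, two applications of pigeonhole produce a single character $\eta$ together with a single rational approximation that is shared by $\gg \delta^{O(c_1)} K$ of the bad $k$. These simultaneous near-rationality constraints, indexed by these $k$, form an over-determined linear system in the $\alpha_j$ for $\eta \circ g$, and a Weyl-type inversion forces each $\alpha_j$ itself to be $\delta^{O(c_1)}$-close to a rational of denominator $\ll \delta^{-O(c_1)}$. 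By the converse direction of Leibman's theorem, this contradicts the total $\delta$-equidistribution of $(g(n)\Gamma)_{n \leq M}$, provided $\sigma$ and $c_1$ are chosen small enough in terms of $d$ and $m$.

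The main obstacle is bookkeeping: tracking the parameter losses through both pigeonholes, the affine substitution, and the handling of $1_{P'_k}$ tightly enough that the final combined exponent in the contradiction is a positive $O(c_1)$-power of $\delta$. Choosing $\sigma$ sufficiently small ensures that the subprogressions of step $k' \leq 2K \leq N^{1/2}$ remain of length $\gg \delta^{O(c_1)} M$, which is what allows the quantitative Leibman theorem to be applied to $h_k$ in the first place.
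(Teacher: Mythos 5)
Your overall architecture matches the paper's: pass to the progression $\{n : k \mid \W n + b\}$, apply the quantitative Leibman theorem to each bad $k$ to extract a horizontal character with small smoothness norm on the corresponding subsequence, pigeonhole down to a single character $\psi$, and use a Waring-type recurrence argument to force the Taylor coefficients of $\psi\circ g$ to be near-rational, contradicting total $\delta$-equidistribution. However, there is a genuine gap at the decisive step, which is exactly where this lemma departs from the Type I analysis of \cite{green-tao-nilmobius}. Writing $\psi\circ g(n)=\beta_d n^d+\dots+\beta_0$ and $g_k(n)=g(kn+u_k)$, the coefficient of $n^j$ in $\psi\circ g_k$ is $k^j\sum_{i\geq j}\binom{i}{j}u_k^{i-j}\beta_i$; only the top coefficient $\beta_d k^d$ is free of the offsets $u_k$, which vary with $k$ in an uncontrolled way. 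Your ``over-determined linear system plus Weyl-type inversion'' does not explain how to decouple $\beta_{d-1}$ from the contaminating term $\binom{d}{1}u_k\beta_d$, and a single pigeonhole over characters and rational approximants does not achieve it. The paper handles this by an induction on the degree: first deduce (via Waring) that $\beta_d$ is close to a rational with denominator $\tilde q$; then pigeonhole the offsets $u_k$ into a single residue class modulo $\tilde q$, so that each $\binom{d}{j}u_k^{j}\beta_d$ equals a fixed rational $a_j/\tilde q$ up to admissible error and can be subtracted off; deduce near-rationality of $\beta_{d-1}$; pass to a further subprogression of the $u_k$; and repeat. Each of the $d$ rounds sacrifices a further $\delta^{O(c_1)}$-fraction of the surviving $k$'s, and this loss must be tracked. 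Without this iterative peeling, your conclusion that every $\beta_j$ is near-rational is unsubstantiated.

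A secondary problem: your opening reduction ``the bad condition forces $kk'\ll\delta^{-O(c_1)}$, so I may assume $k\leq\delta^{-O(c_1)}$'' takes the displayed normalisation $k^{-1}\,\E_n\, 1_{k\mid \W n+b}(\cdots)$ at face value and thereby renders the lemma vacuous for all $K\gg\delta^{-O(c_1)}$. The quantity actually at stake (the one manipulated in the paper's proof and in the dyadic summation that applies the lemma) is the conditional average $\E_{m}\,1_P\,F(g(km+u_k)\Gamma)$ over the progression itself, whose trivial bound is $1$, so no such truncation of the range of $k$ is available; the whole point of the Waring step is to exploit the $d$-th powers $k^d$ for $k$ ranging over a $\delta^{O(c_1)}$-proportion of a dyadic block $(K,2K]$ with $K$ as large as $N^{1/2}$. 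Your later steps implicitly work in this larger range, so the reduction is also internally inconsistent with the rest of your argument.
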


\begin{proof}
 Suppose for contradiction that there is some $K$, 
$1 \leq K \leq N^{1/2}$, such that the following inequality holds for $\gg
\delta^{O(c_1)} K$ values of $k \in (K,2K]$
\begin{align*}
&\left|\frac{1}{k} 
  \E_{N/\W < n < 2N/\W} ~
  1_{k|\W n + b} 
  1_P(\W n + b)
  F(g(n)\Gamma)\right| \\
& = 
 \left|\E_{N/\W k < m < 2N/\W k} ~
  1_P(\W(km + u_k) + b) F(g(k m + u_k)\Gamma) \right| \\
& \gg \delta^{O(c_1)}~,
\end{align*}
where $u_k$ is the smallest integer for which $k| \W u_k + b$. 
$u_k$ exists for all $k$ for which the inequality holds.
To remove the indicator function of $P$, let $\ell \leq Q$ denote the
common difference of $P$ and split the range of $m$ into progressions of
common difference $\ell$.
Pigeonholing shows that there is some residue $b' \Mod{\ell}$ such that we
still find $\gg \delta^{O(c_1)}K$ values of $k \in (K,2K]$ that satisfy
\begin{align}\label{k-condition}
\left| \sum_{m' \in I_k } F(g(k (\ell m' + b') + u_k)) \right| \gg
\delta^{O(c_1)} \frac{N}{\W k \ell}~,
\end{align}
where $I_k \subseteq [N/\W2k\ell -1, N/\W k\ell ]$ is an interval.
This lower bound means that for those $k$ that satisfy
\eqref{k-condition}, the sequence $\tilde{g}_k : \Z \to G$ defined by
$\tilde{g}_k(n):=g(k (\ell n + b') + u_k)$,
fails to be $\delta^{O(c_1)}$-equidistributed in $G/\Gamma$ on the range
$N_k = [N/\W2k\ell - 1, N/\W k\ell]$.

By \cite[Thm 2.9]{green-tao-polynomialorbits} there is a
non-trivial horizontal character $\psi_k: G \to \R/\Z$ of modulus
$|\psi_k| \ll \delta^{O(c_1)}$ such that
$$\| \psi_k \circ \tilde{g}_k \|_{C^{\infty}[N_k]} \ll \delta^{-O(c_1)}.$$
For notational simplicity, we remove the dependence on $b$ and $\ell$.
This step is not strictly necessary for the proof.
Let $g_k : \Z \to G$ be defined by $g_k(n):=g(k n + u_k)$.
Then \cite[Lemma 8.4]{green-tao-nilmobius} asserts that there is some
integer $q_k$, $1 < q_k \ll \delta^{-O(c_1)}$ such that
$$\| q_k \psi_k \circ g_k \|_{C^{\infty}[N_k]} \ll \delta^{-O(c_1)}~.$$
Pigeonholing over the possible choices of horizontal character
$q_k \psi_k$, there is some non-trivial $\psi$ of modulus $|\psi| \ll
\delta^{O(c_1)}$ among them such that
$$\|\psi \circ g_k \|_{C^{\infty}[N_k]} \ll \delta^{-O(c_1)}$$
for $\gg \delta^{O(c_1)}K$ values of $k \in (K, 2K]$.
Let $$\psi \circ g (n) = \beta_d n^{d} + \dots + \beta_0 $$
be the projection of the polynomial sequence to $\R/\Z$ by the character
$\psi$.
Then 
$$
 \psi \circ g_k(n) = \beta_d k^d n^d + (\text{lower order terms in }n)~.
$$
We now consider just the highest coefficients $\beta_d k^d$.
As in \cite[p.9]{green-tao-nilmobius}, one shows that $\beta_d$ is close
to a rational with small denominator, more precisely, that there is some
$\tilde q, 1 \leq \tilde q \ll \delta^{-O(c_1)}$
\begin{align} \label{beta_d-close-to-rational}
\|\tilde q \beta_d \|_{\R/\Z} \ll \delta^{-O(O(c_1))} (N/\W)^{-d}~.
\end{align}
Behind this is the following: since $\psi \circ g_k$ has small smoothness
norm, the coefficients, in particular $\beta_d k^d$, are close to
rationals with small denominator. 
Waring's theorem states that one can express many integers as a sum of few
$d$th powers. 
This allows us to show that $\beta_d n$ is strongly recurrent in $\R/\Z$,
and hence $\beta_d$ is close to a rational with small denominator.
[This part requires that $\sigma$ is sufficiently small.]

Up to here the argument has followed \cite[\S3]{green-tao-nilmobius}; 
the new changes come in now.
The bound \eqref{beta_d-close-to-rational} means that $\beta_d n^{d}$
varies very slowly on progressions of common difference $\tilde q$.
By pigeonholing, one of these progressions, say 
$\{n \equiv q' \mod \tilde q \}$, contains the numbers $u_k$ for at least
$\gg \delta^{O(c_1)} K$ of our selection of values $k \in (K,2K]$
that also satisfy \eqref{k-condition}.

For each such $k$ consider the full expansion of $\psi \circ g_k$:
\begin{align*}
 \psi \circ g_k(n) 
&= \sum_{j=1}^d \beta_j(k n+ u_k)^j\\
&= \beta_d k^d n^d  
 + \Big( \beta_{d-1} + \binom{d}{1} u_k \beta_{d} \Big) 
   k^{d-1} n^{d-1} \\
&\qquad + \left( \beta_{d-2} + \binom{d-1}{1} u_k \beta_{d-1} + 
     \binom{d}{2} u_k^2 \beta_{d}
   \right)
   k^{d-2} n^{d-2} + \dots ~. 
\end{align*}
Since $u_k \equiv q' \Mod{\tilde q}$, there are integers 
$a_{d-1}, \dots, a_0$ such that 
$$\left\| \binom{d}{j} u_k^{j} \beta_{d} 
  - \frac{a_j}{{\tilde q}} \right\|_{\R/\Z} 
\ll \delta^{-O(c_1)} (N/\W)^{-d}~.$$
We aim to use this information to remove the appearance of the $u_k$,
which vary with $k$ in a way we have no control on, from the
coefficient of $n^{d-1}$, hoping to then run a similar argument as
before to show that $\beta_{d-1}$ is close to being rational.

Writing 
$$\psi \circ g_k(n) = \sum_{j=1}^d \tilde \beta_{j,k} k^j n^j~,$$
the assertion 
$$\| q \tilde \beta_{j,k} k^j \|_{\R/\Z} 
\ll (N/\W K)^{-j} \|\psi \circ g_k\|_{C^{\infty}[N_k]} 
\ll (N/\W K)^{-j} \delta^{-O(c_1)}$$
holds if and only if
$$\left\| q \Big(\tilde \beta_{j,k} - \binom{d}{j} u_k^{j} \beta_{d} 
  + \frac{a_j}{{\tilde q}} \Big)k^j \right\|_{\R/\Z} 
\ll (N/\W K)^{-j} \|\psi \circ g_k\|_{C^{\infty}[N_k]}
\ll (N/\W K)^{-j} \delta^{-O(c_1)}~.$$
Thus, we can remove all occurrence of $\beta_d$ in the $\tilde \beta_j$
for $j < d$.
For $j=d-1$ this also removes all occurrences of $u_k$, since
$$\tilde \beta_{d-1,k}=\beta_{d-1} + \binom{d}{1} u_k \beta_{d}~.$$ 
We proceed inductively:
We know that there is $q=O(1)$ such that for $\gg \delta^{O(c_1)}K$
values of $k$ from our selection of $k \in (K,2K]$ the following holds
$$
\| q k^{d-1} (\beta_{d-1} + \frac{a_{d-1}}{\tilde q}) \|_{\R/\Z} 
\ll (N/\W K)^{-d+1} \delta^{-O(c_1)}~.$$
As before, one deduces via Waring's theorem that 
$\beta_{d-1} + \frac{a_{d-1}}{\tilde q}$, and
hence $\beta_{d-1}$ is close to a rational with small denominator, say
$\tilde{\tilde{q}}$.
Pass to a subprogression of common difference $\tilde{\tilde{q}}$ such
that for many of our $k$ the number $u_k$ belongs to that subprogression,
note that we can remove the appearance of $\beta_{d-1}$ in all $\tilde
\beta_j$ for $j< d-1$, and the appearance of $u_k$ in 
$\tilde \beta_{d-2}$.
Show that $\beta_{d-2}$ is close to a rational with small denominator and
repeat.

Finally, we see that there is $\bar q, 1\leq \bar q \ll \delta^{-O(c_1)}$ 
such that 
$$\| \bar q \beta_j \|_{\R / \Z} \ll \delta^{-O(c_1)} N^{-j}~.$$

This means that $\|\bar q \psi \circ g (n) \|_{\R/\Z}$ is small on a
reasonably long interval: exactly as in \cite{green-tao-nilmobius},
we have for fixed small $\eps>0$, e.g. $\eps = 1/10$,
$$\|\bar q \psi \circ g (n) \|_{\R/\Z} 
\ll n \delta^{-O(c_1)}N^{-1}
\leq \eps$$
for all $n\leq N'= \delta^{Cc_1}N$ provided $C$ is large enough.

Consider the Lipschitz function $\tilde F: G/\Gamma \to [-1,1]$ that
arises as composition of $\bar q \psi$ with a smooth cut-off of the
interval $[-\eps,\eps]$, where the cut-off has Lipschitz constant $O(1)$.
Since 
$\|\bar q \psi\|_{\mathrm{Lip}} \ll |\bar q \psi| \leq \delta^{-O(c_1)}$,
we have $\|F\|_{\mathrm{Lip}} \ll \delta^{-O(c_1)}$.
Thus, if $c_1$ is sufficiently small then 
$$|\E_{n \in [N'] } \tilde F (g(n) \Gamma) | 
\geq 1 > \delta \| \tilde F \|_{\text{Lip}}~,$$
which contradicts the assumption that $g$ was $\delta$-equidistributed
and hence proves the lemma.
\end{proof}

\begin{proof}[Proof of Proposition \ref{non-corr_tildetau'2}] 
Following the reductions from the start of the proof of 
\cite[Proposition 2.1]{green-tao-nilmobius},
one shows that the result is trivially true in all cases that are not
covered by the assumptions of Lemma \ref{typeIsums}.

Since $(g(n)\Gamma)_{n \leq M}$ is totally $\delta$-equidistributed and
since $\int_{G/\Gamma} F = 0$, it suffices to show that 
$$|\E_{n\in[M]} {\tilde \tau}'_{\W,b}(n) 1_{P}(n)F(g(n)\Gamma)| \ll
\delta^{O(1)} \log \log \log N ~.$$
This, however, follows from Lemma \ref{typeIsums} via dyadic summation:
 \allowdisplaybreaks 
\begin{align*}
 &\E_{n\in[M]} {\tilde \tau}'_{\W,b}(n) 1_{P}(n)F(g(n)\Gamma)\\
 &= \frac{2W}{\phi(W)\log N} 
    \sum_{\substack{d < (N/\varpi(b))^{1/2}\\(d,W)=1}} M^{-1}~
    \sum_{d^2 \varpi(b)/\W \leq n \leq M} 
    1_{d|\W n+b} 1_{P}(n)F(g(n)\Gamma) \\
 &\leq \frac{2W}{\phi(W)\log N} 
     \sum_{\substack{j \leq  \\ \frac{1}{2} \log_2(N/\varpi(b))}}
     \sum_{\substack{d \sim 2^j \\(d,W)=1}}
     \sum_{\substack{\ell : d^2 \leq 2^{\ell} \leq M}}
     \frac{2^{\ell-1}}{M}
|\E_{n \in [2^{\ell-1},2^{\ell}]}
     1_{d|\W n+b} 1_{P}(n)F(g(n)\Gamma)| \\
 &\ll \frac{W}{\phi(W)\log N} 
     \sum_{\substack{j \leq  \\ \frac{1}{2} \log_2(N/\varpi(b))}}
     \Bigg( \sum_{\substack{d \sim 2^j \\(d,W)=1}}
     \sum_{\substack{\ell : d^2 \leq 2^{\ell} \leq M}}
     \frac{2^{\ell-1} d}{M}
     \delta^{O(1)} 
     + \delta^{O(1)} 2^{j} 
     \sum_{\substack{\ell : 2^{2j} \leq 2^{\ell} \leq M2^{-j}}}
     \frac{2^{\ell-1} 2^{j}}{M}\Bigg)\\
&\ll \frac{W}{\phi(W)\log N} 
     \Bigg( \sum_{\substack{d<N^{1/2}\\ (d,W)=1}}
       d^{-1} \delta^{O(1)}
     + \delta^{O(1)} \log_2 N 
     \Bigg) \\
 &\ll \delta^{O(1)} \log w(N) \\
 &\ll \delta^{O(1)} \log\log\log N~.
\end{align*} 
\end{proof}

\subsection*{Acknowledgements}

I should like to thank my PhD supervisor Ben Green for suggesting the
original problem that led to the results of this paper, Ben Green and
Tom Sanders for many valuable discussions and encouragement, and Tim
Browning for drawing my attention to the paper \cite{nair-tenenbaum}.
I am particularly grateful to the members of two reading groups for all
their comments, corrections and suggestions. 
Namely, R{\'e}gis de la Bret{\`e}che's group in Paris
(Arnaud Chadozeau, Sary Drappeau and Pierre Le Boudec) and Tim
Browning's group in Bristol (Thomas Bloom, Julia Brandes, Julio Cesar,
Eugen Keil, Siu Lun Alan Lee, Gihan Marasingha, Damaris Schindler and
Michael Swarbrick Jones).

\providecommand{\bysame}{\leavevmode\hbox to3em{\hrulefill}\thinspace}


\begin{thebibliography}{99}


\bibitem{birch} B.~J.~Birch,
\emph{Multiplicative functions with non-decreasing normal order,}
{J. London Math. Soc.} 
{\bf 42} (1967), no.~1, 149--151.

\bibitem{browning}
T.~D.~Browning
\emph{The divisor problem for binary cubic forms,}
{J. Th{\'e}orie Nombres Bordeaux,} to appear.
Preprint available at
{arXiv:1006.3476v1}.

\bibitem{erdos} P.~Erd\H{o}s,
\emph{On the sum $\sum_{k=1}^x d(f(k))$,} 
{J. London Math. Soc.} {\bf 27} (1952), no.~1, {7--15}.

\bibitem{gowers} W.~T.~Gowers,
\emph{Decompositions, approximate structure, transference, and the
Hahn-Banach theorem,}
{arXiv:0811.3103v1},
preprint.

\bibitem{green-tao-longprimeaps} B.~J.~Green and T.~C.~Tao,
\emph{The primes contain arbitrarily long arithmetic progressions,}
{Annals of Math.} {\bf 167 } (2008), No.~2, 481--547.

\bibitem{green-tao-linearprimes} \bysame, 
\emph{Linear equations in primes,} 
{Annals of Math.,} {\bf 171 } (2010), No.~3 , 1753--1850.

\bibitem{green-tao-nilmobius} \bysame, 
\emph{The M\"obius function is strongly orthogonal to nilsequences,}
{arXiv:0807.1736v3},
preprint.

\bibitem{green-tao-polynomialorbits} \bysame, 
\emph{The quantitative behaviour of polynomial orbits on nilmanifolds,}
{Annals of Math.,} to appear.
Preprint available at 
{arXiv:0709.3562v5}.

\bibitem{gtz} B.~J.~Green, T.~C.~Tao and T.~Ziegler
\emph{An inverse theorem for the Gowers $U^k[N]$ norm},
{arXiv:1009.3998v2}, 
preprint.

\bibitem{hardy-wright} G.~H.~Hardy, E.~M.~Wright,
\emph{An Introduction to the Theory of Numbers,}
{Oxford University Press}, 
{Fifth Edition}, 
{1979}.

\bibitem{Ing27JLMS} A.~E.~Ingham. 
\emph{Some asymptotic formulae in the theory of numbers,} 
{J. London Math. Soc.}, {\bf 2} (1927), no.~3, 202--208.

\bibitem{m-diagquadratics} L.~Matthiesen,
\emph{Linear correlations amongst numbers represented by positive definite
binary quadratic forms,} 
{arXiv:1106.4690v1},
preprint.


\bibitem{nair-tenenbaum} {M.~Nair, G.~Tenenbaum},
\emph{Short sums of certain arithmetic functions,}
{Acta Math.} {\bf 180} {(1998)}, {no.~1}, {119--144}.

\bibitem{rttv} {O.~Reingold, L.~Trevisan, M.~Tulsiani and S.~Vadhan},
\emph{Dense Subsets of Pseudorandom Sets,}
{In: Proceedings of the 49th IEEE Symposium on Foundations of
Computer Science} (2008), 76--85.

\bibitem{tenenbaum} G.~Tenenbaum,
\emph{Lois de r\'epartition des diviseurs. {IV},}
{Ann. Inst. Fourier (Grenoble)} {\bf 29 } (1979), no.~3, 1--15.

\end{thebibliography}
\end{document}